\newcommand{\K}[2]{\mathcal{K}\left(#1,#2\right)}
\newcommand{\Kc}[2]{\overline{\K{#1}{#2}}}
\newcommand{\Kint}[2]{\mathscr{I}_\mathcal{K}\left(#1,#2\right)}
\newcommand{\ran}{\mathrm{ran}}
\newcommand{\Bop}{\mathscr{B}(\cH)}
\newcommand{\cH}{\mathcal{H}}
\newcommand{\dom}[1]{\mathcal{D}\left(#1\right)}
\newcommand{\0}{\mathbb{O}}
\newcommand{\1}{\mathbbm{1}}
\newcommand{\R}{\mathbb{R}}
\newcommand{\C}{\mathbb{C}}
\newcommand{\N}{\mathbb{N}}
\newcommand{\rd}{\mathrm{d}}
\newcommand{\perpV}{\perp_V}
\theoremstyle{plain}
\newtheorem{theorem}{Theorem}[section]
\newtheorem{lemma}[theorem]{Lemma}
\newtheorem{corollary}[theorem]{Corollary}
\newtheorem{proposition}[theorem]{Proposition}
\theoremstyle{definition}
\newtheorem{definition}[theorem]{Definition}
\newtheorem{remark}[theorem]{Remark}
\numberwithin{equation}{section}
\newcommand\blfootnote[1]{
    \begingroup
    \renewcommand\thefootnote{}\footnote{#1}
    \addtocounter{footnote}{-1}
    \endgroup
}
\begin{document}

\title[Structural properties of Krylov subspaces and Krylov solvability]{Structural properties of Krylov subspaces, Krylov solvability, and applications to unbounded self-adjoint operators}
\author[N.~Caruso]{No\`{e} Angelo Caruso}
\address[N.~Caruso]{Dipartimento di Scienze Umane, L'Universit\`{a} degli Studi ``Link Campus University'' \\ Via del Casale di San Pio V, 44\\ 00165 Roma (ITALIA).}
\email{n.caruso@unilink.it}
\thanks{The author wishes to acknowledge the support of the Italian National Institute for Higher Mathematics INdAM, and wishes to thank Marko Erceg (University of Zagreb, Croatia) and Alessandro Michelangeli (Prince Mohammad Bin Fahd University, Saudi Arabia; University of Bonn, Germany; and TQT Trieste Institute for Theoretical Quantum Technologies, Italy) for many stimulating discussions.}
\keywords{Krylov solvability, Krylov subspace, ill-posed problems, inverse linear problems, unbounded operator, self-adjoint operator, symmetric operator, Hamiltonian operator, Hamburger moment problem, functional calculus, spectral measure, spectral theory}

\subjclass{47B02, 47B15, 47B25, 47B28, 47A60, 47N40, 47N20, 44A60, 46C05, 65J10, 65J22}

\blfootnote{Accepted to {\it Banach Journal of Mathematical Analysis}.}

\begin{abstract}
This paper presents a study of the inherent structural properties of Krylov subspaces, in particular for the self-adjoint class of operators, and how they relate with the important phenomenon of `Krylov solvability' of linear inverse problems. Owing to the complexity of the problem in the unbounded setting, recently developed perturbative techniques are used that exploit the use of the weak topology on Hilbert space. We also make a strong connection between the approximation properties of the Krylov subspace and the famous Hamburger problem of moments, in particular the determinacy condition.
\end{abstract}

\maketitle

\section{Introduction}
`Krylov solvability' of an inverse linear problem is an operator-theoretic phenomenon with deep implications for applied and theoretical numerical analysis that has garnered recent attention \cite{CM-krylov-perturbation-2021, CMN-2018_Krylov-solvability-bdd,CM-2019_ubddKrylov,CM-Nemi-unbdd-2019,CM-book-Krylov-2022,C-KrylovNormal-2022,CM-krylov-INdAMreview-2024}. An inverse linear problem on a Hilbert space $\cH$ is formulated as
\begin{equation}\label{eq:lininv}
Af = g\,, \quad g \in \ran A\,,
\end{equation}
where $A:\cH \to \cH$ is a closed and densely defined operator, and $f$ is a solution to the above problem. As $g \in \ran A$ we call \eqref{eq:lininv} `solvable'. If, additionally, $A$ is injective, we call \eqref{eq:lininv} `well-defined'; and if furthermore we suppose that $A^{-1}$ is continuous and everywhere defined on $\cH$, the problem \eqref{eq:lininv} is called `well-posed'.

In practical circumstances arising from numerical analysis, one searches for a solution(s) $f$ to \eqref{eq:lininv} within the celebrated Krylov subspace by using a wisely-chosen member of the large family of Krylov algorithms. Therefore, it is critical to know under what circumstances does a solution belong (or fail to belong) to the closure of the Krylov subspace. Such an understanding is important before the actual algorithm is selected and run on the particular problem in order to decide whether the treatment by a particular algorithm has the possibility to yield a correct approximation to a solution of \eqref{eq:lininv}. 

More precisely, the Krylov subspace is constructed using the operator $A$ and known datum vector $g$ in the following manner
\begin{equation}\label{eq:KrylovSubspace}
\K{A}{g} := \mathrm{span}\{A^kg \,|\, k \in \N_0\}\,,
\end{equation}
or in other words, $\K{A}{g}$ is the space of all the polynomials in the operator $A$ applied to the vector $g$. Definition~\eqref{eq:KrylovSubspace} only makes sense when $g$ is an $A$-smooth vector, i.e., $g \in C^\infty(A) := \cap_{n \in \N_0} \mathcal{D}(A^n)$. In purely operator-theoretic jargon, $\K{A}{g}$ is known as the cyclic subspace of the operator $A$ with respect to the vector $g$.

If the inverse linear problem exhibits the existence of a solution in the closure of $\K{A}{g}$, we call such a solution a \emph{Krylov solution}, and we say that \eqref{eq:lininv} is \emph{Krylov solvable}. Until recently \cite{CMN-2018_Krylov-solvability-bdd,CM-2019_ubddKrylov,CM-Nemi-unbdd-2019,CM-book-Krylov-2022}, limited attention has been given to this topic, and mainly in the context of studying the convergence of specific algorithms (e.g., GMRES, CG, LSQR, etc.) to a solution of a inverse linear problem typically with specific assumptions on the operator (e.g., $A$ is bounded, positive, purely discrete spectrum, etc)--see  \cite{Nemirovskiy-Polyak-1985,Nemirovskiy-Polyak-1985-II,Engl-Hanke-Neubauer-1996,Hanke-ConjGrad-1995,Karush-1952,Campbell-Ipsen-1996-I,Campbell-Ipsen-1996-II,CN-2018}.

The recent studies \cite{CMN-2018_Krylov-solvability-bdd,CM-2019_ubddKrylov} have indicated a strong link between the Krylov solvability properties of an inverse linear problem and the inherent structural properties of the Krylov subspace, even though the latter may be difficult to access and therefore analyse. In particular, a construct known as the `Krylov intersection' has been identified as a critical structure to determine whether or not the inverse linear problem is Krylov solvable (see \cite{CMN-2018_Krylov-solvability-bdd,CM-2019_ubddKrylov} for further details). In the bounded operator setting, the analysis of structural properties of Krylov subspaces simplifies owing to the absence of domain issues: indeed any vector $x \in \cH$ is in the domain of the operator, and therefore we do not have to account for possible domain issues when studying closed subspaces under the action of the operator.

Though the structural properties of Krylov subspaces in the unbounded setting and their link to the Krylov solvability of the linear inverse problem has already been discussed in a previous study \cite{CM-2019_ubddKrylov}, here we re-analyse the problem via an analogous route by changing the topology of the closure of the Krylov subspace by using the graph norm. This permits us to avoid complications that arise in establishing links between Krylov solvability and the structural properties of the Krylov subspace as we focus on the class of closed operators acting on a Hilbert space $\cH$. Such complications naturally arise when using the ambient topology of $\cH$ due to domain issues when we consider closures of subsets of the domain of an unbounded operator. Indeed, there are complications in \cite{CM-2019_ubddKrylov} that manifest in the additional assumption that certain solution(s) to \eqref{eq:lininv}, when projected onto $\Kc{A}{g}$, should remain within the domain of the operator. In both practical and analytical situations, this projection criterion may be difficult to verify.

In this article for the first time we also unmask the strong link between the structural properties of Krylov subspaces arising from self-adjoint operators and the famous problem of moments on the real line, or Hamburger moment problem, and orthogonal polynomials. The Hamburger moment problem and orthogonal polynomials is a classical area of analysis that also has intimate links to the theory of symmetric operators and their self-adjoint extensions (see \cite{schmu_moment} for an excellent overview). This link between Krylov subspaces generated by self-adjoint operators and the Hamburger moment problem turns out to be of paramount importance for describing the approximation properties of the Krylov subspace, in particular when it is isometrically isomorphic to an $L^2$ measure space. Indeed, the existence of a unique solution to the Hamburger moment problem indicates that the Krylov subspace possesses good approximation properties, as outlined in Remark~\ref{rem:Kapprox_HamburgerDet}. These approximation properties permit us to investigate further the structural properties of Krylov subspaces, and in particular the notion of Krylov intersection as well as the so-called `Krylov core condition' (see \cite{CM-2019_ubddKrylov} for details).

This paper is split into several parts: Section~\ref{sec:KrylovInt} deals with the reformulation of the Krylov intersection (see Definition~\ref{def:KrylovInt} and Remark~\ref{rem:graphnormclosure_advantage}) and the Krylov solvability of the inverse linear problem under a change in topology using the graph norm of the operator on its domain. We mention in particular Theorem~\ref{th:KrylovInt} that establishes the importance of this reformulated Krylov intersection and establishes the conditions that guarantee the Krylov solvability of \eqref{eq:lininv}. Moreover, Theorem~\ref{th:KrylovInt} avoids the drawbacks of similar propositions present in \cite{CM-2019_ubddKrylov} that require additional assumptions to establish the Krylov solvability. This theorem certainly reaffirms and strengthens the importance of structural properties of Krylov subspaces in determining the Krylov solvability of the inverse linear problem \eqref{eq:lininv}.

Section~\ref{sec:review_struct_perturb} is split into several subsections. Sections~\ref{subsec:review_struct} and \ref{subsec:review_struct_weakgap} review structural notions of Krylov subspaces and the concept of the `weak-gap', respectively. Section~\ref{subsec:review_struct_convergenceweakgap} reviews the convergence properties of linear subspaces under the weak-gap metric between closed subspaces, as well as presenting several new results. Among the new results worth mentioning are Propositions~\ref{prop:weakconvergence_orthog} and \ref{prop:convergenceofprojections}. Proposition~\ref{prop:weakconvergence_orthog} establishes conditions for the convergence in the weak-gap metric of the orthogonal complements of converging subspaces. This result is then exploited further in Proposition~\ref{prop:convergenceofprojections} that establishes the strong operator topology convergence of the associated projection operators of the converging subspaces. Proposition~\ref{prop:weakconvergence_orthog} is also further exploited in Section~\ref{sec:selfadjoint_perturbations} for a key result, Theorem~\ref{th:Kreduced}.

Section~\ref{sec:selfadjoint_struct} is split into two main parts: Section~\ref{subsec:selfadjoint_Hamburger_moment} reviews the basics of the Hamburger moment problem, and Section~\ref{subsec:selfadjoint_struct_Krylov} uses this theory to uncover structural properties of Krylov subspaces arising from self-adjoint operators. We highlight from Section~\ref{subsec:selfadjoint_struct_Krylov}, Propositions~\ref{prop:quasianalytic_determinate}, \ref{prop:Kcore_Hamburgerderteminate}, and \ref{prop:bddvec_invariances}. Proposition~\ref{prop:quasianalytic_determinate}, together with Remarks~\ref{rem:Kapprox_HamburgerDet} and \ref{rem:KisomorphismL2}, establish an isometric isomorphism between the closed Krylov subspace $\Kc{A}{g}$, for self-adjoint $A$, and an $L^2$ measure space under suitable conditions on the vector $g$. This is a natural followup of a previous result \cite[Theorem~7.2]{CM-2019_ubddKrylov}, and is indicative of the `good approximation' properties of $\Kc{A}{g}$ under mild assumptions on the regularity of the vector $g$. Proposition~\ref{prop:Kcore_Hamburgerderteminate} further establishes a very important structural property of the Krylov subspace, known as the Krylov core condition. It is a natural followup and extension of the result \cite[Theorem~7.1]{CM-2019_ubddKrylov}, and solidifies the link between the Hamburger moment problem and the theory presented in this study. Lastly, Proposition~\ref{prop:bddvec_invariances} reveals for the first time new structural information for the Krylov subspace. Under more stringent regularity conditions on the vector $g$, the orthogonal complement of $\K{A}{g}$ in the Hilbert space induced by the \emph{graph norm} scalar product remains contained in the orthogonal complement of $\K{A}{g}$ induced by the scalar product of the \emph{ambient} Hilbert space. This is a crucial piece of structural information that is not at all trivial. It is immediately exploited in Theorem~\ref{th:gbdd_Ksolv_Kinttrivial} to establish Krylov solvability for the inverse linear problem. More importantly, it is used as a critical ingredient to establish Krylov solvability in Theorem~\ref{th:Kreduced} for larger classes of vectors $g$.

Lastly, in Section~\ref{sec:selfadjoint_perturbations} we close the study with a perturbative analysis of Krylov subspaces as well as an analysis of the Krylov solvability arising from self-adjoint inverse linear problems. Worth mentioning in particular are Proposition~\ref{prop:KntoK}, Corollary~\ref{cor:Korthog_limits}, and Theorem~\ref{th:Kreduced}. Proposition~\ref{prop:KntoK} for a self-adjoint operator $A$ establishes the existence of a suitable perturbed sequence of Krylov subspaces $(\Kc{A}{g_n})_{n \in \N}$, such that $g_n \to g$ in the graph norm, and that converge in the weak-gap metric to a well-characterised linear subspace containing $\Kc{A}{g}$. This well-characterised subspace is isometrically isomorphic to an $L^2$ measure space and may actually possess better approximation properties than $\Kc{A}{g}$. This is a rather counter-intuitive result as the proof uses a containment of approximating Krylov subspaces $\Kc{A}{g_n} \subset \Kc{A}{g}$ for each $n \in \N$. Proposition~\ref{prop:KntoK} therefore demonstrates that `simple' approximations of $\Kc{A}{g}$ may actually converge to a subspace possibly containing \emph{more} elements than $\Kc{A}{g}$. Corollary~\ref{cor:Korthog_limits} to Proposition~\ref{prop:KntoK} reveals the convergence properties of the corresponding orthogonal complements of $\Kc{A}{g_n}$. Lastly, our Theorem~\ref{th:Kreduced} establishes for self-adjoint $A$ and suitably regular $g$ the key structural property of the \emph{triviality of the Krylov intersection}, and thus the Krylov solvability for the inverse linear problem \eqref{eq:lininv}. This theorem expands on the study \cite{CM-2019_ubddKrylov} in which the triviality of the Krylov intersection for self-adjoint $A$ was established for a more restrictive class of vectors $g$. It also complements \cite{CM-Nemi-unbdd-2019} where Krylov solvability was established for $A \geqslant \0$ and a suitably general class of vectors $g$, yet without gaining any information on the underlying structure of the Krylov subspace itself.

\subsection{Notation}\label{subsec:notation}~

Throughout this paper, the operator $A:\cH \to \cH$ is a closed operator on a Hilbert space $\cH$ with domain $\dom{A}$. As $A$ is closed, the normed vector space $V := (\dom{A},\Vert \cdot\Vert_A)$ is complete, where 
\begin{equation}\label{eq:graphnorm}
\Vert x\Vert_A^2 := \Vert x\Vert_\cH^2 + \Vert Ax \Vert_\cH^2\,,
\end{equation}
for all $x \in \dom{A}$. By the angled brackets $\langle \cdot, \cdot\rangle$ we denote the standard scalar product on Hilbert space (anti-linear in the first component), and by 
\begin{equation}\label{eq:graphscalar}
\langle x, y\rangle_V := \langle x, y \rangle + \langle Ax, Ay\rangle
\end{equation}
for $x,y \in \dom{A}$, we define the appropriate scalar product on the space $V$ that induces $\Vert\cdot\Vert_A$, thereby making $V$ a Hilbert space.

Throughout the paper, given any $M \subset \cH$, we denote by $\perp$ the standard orthogonal complement in $\cH$,
\[M^\perp := \{x \in \cH \,|\, \langle x, y \rangle=0\,,\, \forall\, y\in M\}\,,\]
and under the further condition that $M \subset V$, by $\perpV$ we denote the space
\[M^{\perpV} := \{x \in V \,|\, \langle x, y\rangle_V = 0 \,,\, \forall\, y \in M\}\,.\]
We also use $\overline{\,\cdot\,}$ to denote closure in the ambient Hilbert space $\cH$ with the norm $\Vert \cdot \Vert_\cH$, and $\overline{\,\cdot\,}^V$ to denote the closure in $V$ with the norm $\Vert \cdot\Vert_A$. We use $\Vert \cdot\Vert_\mathrm{op}$ to denote the standard operator norm from $\cH$ to $\cH$.

It is worth making the remark on the strong and weak closure of $M$ in $\cH$ when $M\subset\cH$ is a linear subspace. Under these conditions the strong and weak closures of $M$ in $\cH$ coincide as $M$ is convex (see \cite[Theorem~3.7]{Brezis-FA-Sob-PDE}). This argument also applies when $M \subset V$ and we consider the strong and weak closures in $V$.

Also true is the fact that for any $M \subset \cH$ (not necessarily linear) the space $M^\perp$ is strongly closed in $\cH$. Furthermore, as $M^\perp$ is a linear subspace and thus convex, it is also weakly closed in $\cH$. Again, a similar argument applies when $M \subset V$ thus showing that also $M^{\perpV}$ is closed in both the strong and weak topology of $V$.

It is well known that the operator $A$, when viewed as acting from the space $V$ (a Hilbert space with \emph{stronger} topology than its embedding space) to $\cH$, is a \emph{bounded} operator, i.e., $A \in \mathcal{B}(V,\cH)$.

\section{The Krylov Intersection}\label{sec:KrylovInt}
In this section we develop the Krylov intersection using the space $V$. We therefore avoid the necessity that the orthogonal projection onto the closed Krylov subspace (in $\cH$) of at least one solution to the inverse linear problem remain in the domain of the operator in order to establish Krylov solvability of the inverse linear problem. To clarify the precise meaning of this point, we state the relevant proposition from \cite{CM-2019_ubddKrylov}.
\begin{proposition}[Proposition~6.5, \cite{CM-2019_ubddKrylov}]\label{prop:Pk_requirements}
For a given Hilbert space $\cH$ let $A$ be a densely defined and closed operator on $\cH$, let $g \in \ran A \cap C^\infty(A)$, and let $f \in \mathcal{D}(A)$ satisfy $Af = g$. Assume furthermore that
\begin{itemize}
	\item[(a)] $A \big( \Kc{A}{g} \cap \mathcal{D}(A) \big) \subset \Kc{A}{g}$,
	\item[(b)] $P_\mathcal{K}f \in \mathcal{D}(A)$, where $P_\mathcal{K}$ is the orthogonal projection operator onto the subspace $\Kc{A}{g}$, and
	\item[(c)] $\Kc{A}{g} \cap A \big( \K{A}{g}^\perp \cap \mathcal{D}(A) \big) = \{0\}$.
\end{itemize}
Then there exists $f_\circ \in \Kc{A}{g} \cap \mathcal{D}(A)$ such that $Af_\circ = g$.
\end{proposition}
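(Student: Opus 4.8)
The plan is to split the given solution $f$ via the orthogonal projection onto the closed Krylov subspace and then show that the component lying in the orthogonal complement is annihilated by $A$, so that the projected component is itself a Krylov solution. Concretely, I would write $f = f_\circ + f_\perp$ where $f_\circ := P_\mathcal{K} f \in \Kc{A}{g}$ and $f_\perp := (\id - P_\mathcal{K})f \in \K{A}{g}^\perp$ (recall that $\K{A}{g}^\perp = \Kc{A}{g}^\perp$, since the orthogonal complement of a set coincides with that of its closure). Assumption (b) gives $f_\circ \in \dom{A}$, and since $f \in \dom{A}$ by hypothesis, linearity of the domain yields $f_\perp = f - f_\circ \in \dom{A}$ as well. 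Thus both summands lie in $\dom{A}$ and the unbounded operator $A$ may be applied to each separately.

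Next I would exploit the fact that the datum $g$ lies in the Krylov subspace itself: since $g = A^0 g \in \K{A}{g} \subset \Kc{A}{g}$, and since $f_\circ \in \Kc{A}{g} \cap \dom{A}$, assumption (a) forces $A f_\circ \in \Kc{A}{g}$. Applying $A$ to the decomposition and using $Af = g$ gives $Af_\perp = g - Af_\circ$, and as $\Kc{A}{g}$ is a linear subspace containing both $g$ and $Af_\circ$, I conclude $Af_\perp \in \Kc{A}{g}$. On the other hand $f_\perp \in \K{A}{g}^\perp \cap \dom{A}$, so by definition $Af_\perp \in A\big(\K{A}{g}^\perp \cap \dom{A}\big)$. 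Hence $Af_\perp$ lies in the intersection $\Kc{A}{g} \cap A\big(\K{A}{g}^\perp \cap \dom{A}\big)$, which by assumption (c) is $\{0\}$.

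Therefore $Af_\perp = 0$, and the decomposition collapses to $Af_\circ = g$ with $f_\circ \in \Kc{A}{g} \cap \dom{A}$, which is precisely the claimed Krylov solution. The argument is essentially a careful bookkeeping of domains and subspace memberships, and each of the three hypotheses enters exactly once: (b) secures that the orthogonal splitting respects the domain, (a) keeps $Af_\circ$ inside the Krylov subspace, and (c)---the triviality of the Krylov intersection---is the genuine structural ingredient that eliminates the orthogonal residue. I expect no real obstacle in the computation itself; the only point requiring care is to verify all domain memberships \emph{before} applying $A$ termwise, since without (b) the splitting $f = f_\circ + f_\perp$ need not land inside $\dom{A}$. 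It is exactly this domain subtlety---the requirement that $P_\mathcal{K}f$ stay in $\dom{A}$, which may be hard to check in practice---that the graph-norm reformulation of Theorem~\ref{th:KrylovInt} is designed to circumvent.
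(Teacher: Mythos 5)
Your proof is correct and complete, and it is essentially the same argument the paper relies on: although the paper imports Proposition~\ref{prop:Pk_requirements} from \cite{CM-2019_ubddKrylov} without reproving it, your decomposition $f = P_\mathcal{K}f + (\id - P_\mathcal{K})f$, with (b) securing domain membership of both components, (a) keeping $AP_\mathcal{K}f$ in $\Kc{A}{g}$, and (c) killing the orthogonal residue, is precisely the strategy the paper itself uses to prove its graph-norm analogue, Theorem~\ref{th:KrylovInt}, where the projection $P_\mathcal{K}^V$ in $V$ replaces $P_\mathcal{K}$ and the domain hypothesis (b) becomes superfluous. Your closing remark correctly identifies this as the exact point the reformulation in Section~\ref{sec:KrylovInt} is designed to eliminate.
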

Proposition~\ref{prop:Pk_requirements} gives conditions that guarantee the Krylov solvability of the inverse linear problem in the unbounded setting, yet there is the rather strong requirement (b) that the projection onto $\Kc{A}{g}$ of at least one solution to the inverse linear problem remain within the domain of the operator $A$; something that in practice is difficult to verify. The advantage of the theory in this present article is that this assumption (b) in Proposition~\ref{prop:Pk_requirements} is bypassed altogether.

We now proceed to prove some technical propositions and a lemma that will be needed in proving one of the main results of this section, namely Theorem~\ref{th:KrylovInt}.

\begin{proposition}\label{prop:invariance}
Let $A:\cH \to \cH$ be a closed linear operator, and $g \in C^\infty(A)$. Then 
\begin{equation}
A \Kc{A}{g}^V \subset \Kc{A}{g}\,.
\end{equation}
\end{proposition}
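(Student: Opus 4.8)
The plan is to reduce everything to the single structural fact, recorded at the close of the Notation section, that $A$ is a \emph{bounded} map from $V$ into $\cH$; once this is in hand the inclusion follows from a routine continuity-of-limits argument, and the only real work is to keep the two ambient topologies straight. First I would record the elementary self-invariance of the Krylov subspace: for every $k \in \N_0$ one has $A(A^k g) = A^{k+1} g \in \K{A}{g}$, so by linearity $A\,\K{A}{g} \subset \K{A}{g} \subset \Kc{A}{g}$. Note also that $\K{A}{g} \subset \Cinfop{A} \subset \dom{A}$ since $g \in \Cinfop{A}$, so $A$ acts on all of $\K{A}{g}$; and moreover $\Kc{A}{g}^V \subset V = \dom{A}$, so the expression $A\,\Kc{A}{g}^V$ is meaningful in the first place.

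Next I would fix an arbitrary $x \in \Kc{A}{g}^V$ and, using that $\Kc{A}{g}^V$ is by definition the closure of $\K{A}{g}$ in the graph norm, select a sequence $(x_n)_{n \in \N} \subset \K{A}{g}$ with $\norm{x_n - x}{A} \to 0$. Since $A \in \mathcal{B}(V,\cH)$ is continuous, applying it gives $Ax_n \to Ax$ in the norm of $\cH$. By the invariance established in the first step, each $Ax_n$ lies in $\K{A}{g} \subset \Kc{A}{g}$.

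Finally, because $\Kc{A}{g}$ is closed in $\cH$ by definition, the $\cH$-limit $Ax$ of the sequence $(Ax_n)_n \subset \Kc{A}{g}$ belongs to $\Kc{A}{g}$ as well. As $x$ was arbitrary, this yields $A\,\Kc{A}{g}^V \subset \Kc{A}{g}$. I do not expect a genuine obstacle here: the whole strategy rests on the continuity of $A$ as a map $V \to \cH$, which is exactly the advantage of closing $\K{A}{g}$ in the graph norm rather than in $\cH$. The only point requiring a little care --- though not an obstacle --- is that $x$ is approximated in the stronger $V$-norm while the conclusion is an inclusion into the weaker $\cH$-closure; it is precisely the boundedness of $A: V \to \cH$ that transfers convergence from the former to the latter.
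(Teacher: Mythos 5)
Your proof is correct and follows essentially the same route as the paper: both arguments rest on the invariance $A\,\K{A}{g} \subset \K{A}{g}$ together with the continuity of $A \in \mathcal{B}(V,\cH)$, the paper phrasing this via the topological fact that a continuous map sends the closure of a set into the closure of its image, while you unwind that same fact into an explicit sequential argument. No gap; the extra care you take with domains and with distinguishing the two topologies matches the paper's intent exactly.
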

\begin{proof}
Indeed, 
\[A \K{A}{g} \subset A \Kc{A}{g}^V \subset \overline{A \K{A}{g}} \subset \Kc{A}{g}\,,\]
the second inclusion being a consequence of the continuity of A as an operator from $V$ to $\cH$ (see \cite[Theorem~18.1]{Munkres-Topology}).
\end{proof}

\begin{lemma}\label{lem:lemma1}
Let $A: \cH \to \cH$ be a closed linear operator, and let $0 \in \rho(A)$. Then, in addition to $A^{-1} \in \Bop$, we have $A^{-1} \in \mathcal{B}(\cH,V)$. The converse also holds, i.e., if $A^{-1}$ exists and $A^{-1} \in \mathcal{B}(\cH,V)$, then $0 \in \rho(A)$.
\end{lemma}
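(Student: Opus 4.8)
The plan is to treat the two implications separately, since each reduces to a short direct argument once the right objects are identified. For the forward direction, I would assume $0 \in \rho(A)$, which by the definition of the resolvent set (for the closed operator $A$) means precisely that $A$ is a bijection from $\dom{A}$ onto $\cH$ with $A^{-1} \in \Bop$. The key observation is that for closed $A$ the inverse automatically maps into the domain, $A^{-1}\colon \cH \to \dom{A} = V$ as a set, so the graph norm of $A^{-1}g$ is well defined. The entire content of this direction is then the single computation: for any $g \in \cH$,
\[
\norm{A^{-1}g}{A}^2 \;=\; \norm{A^{-1}g}{\cH}^2 + \norm{AA^{-1}g}{\cH}^2 \;=\; \norm{A^{-1}g}{\cH}^2 + \norm{g}{\cH}^2 \;\leqslant\; \bigl(\opnorm{A^{-1}}^2 + 1\bigr)\,\norm{g}{\cH}^2 ,
\]
where I have used $AA^{-1}g = g$ and the boundedness of $A^{-1}$ on $\cH$. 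This exhibits $A^{-1}$ as a bounded operator from $\cH$ into $V$ with $\norm{A^{-1}}{\mathcal{B}(\cH,V)} \leqslant (\opnorm{A^{-1}}^2+1)^{1/2}$, giving $A^{-1} \in \mathcal{B}(\cH,V)$.

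For the converse I would assume $A^{-1}$ exists and lies in $\mathcal{B}(\cH,V)$, and aim to verify that $A$ is a bijection from $\dom{A}$ onto $\cH$, which for a closed operator is equivalent to $0 \in \rho(A)$. Injectivity is exactly the existence of $A^{-1}$. Surjectivity I would extract from the fact that $A^{-1} \in \mathcal{B}(\cH,V)$ is everywhere defined on $\cH$: for each $g \in \cH$ the vector $f := A^{-1}g$ lies in $V = \dom{A}$ and satisfies $Af = g$, so $\ran A = \cH$. With $A$ closed, injective, and surjective, the closed graph theorem forces $A^{-1} \in \Bop$ and hence $0 \in \rho(A)$. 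Alternatively, composing $A^{-1} \in \mathcal{B}(\cH,V)$ with the continuous embedding $V \hookrightarrow \cH$ immediately recovers $A^{-1} \in \Bop$, and together with $A \in \mathcal{B}(V,\cH)$ (noted in the excerpt) one reads off that $A$ has a bounded two-sided inverse.

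I do not expect a serious obstacle here, as both directions are essentially two-line arguments; the lemma is a bookkeeping statement relating the ambient and graph topologies through the inverse. The one point deserving care is the precise reading of the hypothesis ``$A^{-1}$ exists and $A^{-1} \in \mathcal{B}(\cH,V)$'' in the converse: I would make explicit that membership in $\mathcal{B}(\cH,V)$ carries with it everywhere-definedness on $\cH$ (yielding surjectivity of $A$) and that the bounded operator so obtained genuinely inverts $A$, i.e.\ $AA^{-1} = \id_\cH$ and $A^{-1}A = \id_V$. Once this interpretation is fixed, the conclusion $0 \in \rho(A)$ follows with no further input beyond the closedness of $A$.
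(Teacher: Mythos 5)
Your proof is correct, and its converse matches the paper's, but your forward direction takes a genuinely different route. The paper treats $A$ as a bounded bijection from $V$ onto $\cH$ and invokes the open mapping theorem to conclude that $A$ is a homeomorphism between $V$ and $\cH$, hence $A^{-1} \in \mathcal{B}(\cH,V)$. You instead compute the graph norm of $A^{-1}g$ directly, using $AA^{-1}g = g$ together with $A^{-1} \in \Bop$, which yields the explicit bound $\norm{A^{-1}}{\mathcal{B}(\cH,V)} \leqslant \bigl(\opnorm{A^{-1}}^2 + 1\bigr)^{1/2}$. Your computation is more elementary (no Baire-category input) and quantitative; on the other hand it presupposes $A^{-1} \in \Bop$ as part of the meaning of $0 \in \rho(A)$, whereas the paper's open-mapping argument needs only bijectivity of $A$ from $\dom{A}$ onto $\cH$, so it derives bounded invertibility rather than assuming it --- a mild advantage if one's definition of the resolvent set for a closed operator asks only for bijectivity. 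For the converse, the paper simply observes that $\norm{A^{-1}x}{\cH} \leqslant \norm{A^{-1}x}{A}$, i.e., it composes $A^{-1}$ with the continuous embedding $V \hookrightarrow \cH$; this is precisely the alternative you mention at the end, while your primary route through the closed graph theorem is correct but heavier than necessary. Your explicit remark that everywhere-definedness of $A^{-1}$ on $\cH$ forces surjectivity of $A$ is a point the paper leaves implicit, and including it is a genuine (if small) improvement in rigour.
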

\begin{proof}
If $0 \in \rho(A)$, we have that $A$ is injective on its domain and surjective. As $A: V \to \cH$ is a bounded bijection between $V$ and $\cH$, from the open mapping theorem we immediately have that $A$ is a homeomorphism between $V$ and $\cH$.

We now show the converse statement. Indeed assuming that $A^{-1}$ exists and is bounded from $\cH$ to $V$, we see that 
\[\Vert A^{-1}\Vert_\mathrm{op} = \sup_{\Vert x\Vert_\cH \leqslant 1} \Vert A^{-1}x \Vert_\cH \leqslant \sup_{\Vert x\Vert_\cH \leqslant 1} \Vert A^{-1}x\Vert_A = \Vert A^{-1}\Vert_{\mathcal{B}(\cH,V)} \,.\]
\end{proof}

\begin{proposition}\label{prop:technicalprop}
Let $A$ be a closed, injective operator on $\cH$, and $g \in \ran A \cap C^\infty (A)$. Let $f \in \dom{A}$ be the unique solution to $Af = g$.
\begin{itemize}
	\item[(i)] If $f \in \Kc{A}{g}^V$, then $A \Kc{A}{g}^V$ is dense in $\Kc{A}{g}$.
	\item[(ii)] Assume further that $0 \in \rho(A)$. Then $f \in \Kc{A}{g}^V$ if and only if $A\Kc{A}{g}^V$ is dense in $\Kc{A}{g}$.
\end{itemize}
\end{proposition}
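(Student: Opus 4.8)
The plan is to exploit the linearity of the $V$-closure together with Proposition~\ref{prop:invariance} for the containment $A\Kc{A}{g}^V \subset \Kc{A}{g}$, and then to handle the converse in (ii) through the homeomorphism $A:V \to \cH$ furnished by Lemma~\ref{lem:lemma1}.

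For part (i), I would first observe that $\Kc{A}{g}^V$ is a closed linear subspace of $V$, so that $A\Kc{A}{g}^V$ is a linear subspace of $\cH$. Applying $A$ to the (non-closed) Krylov subspace gives
\[
A\K{A}{g} = \mathrm{span}\{A^k g \,|\, k \geqslant 1\}\,,
\]
which omits only the $k=0$ generator $g$ relative to $\K{A}{g}$ itself. This is precisely where the hypothesis $f \in \Kc{A}{g}^V$ enters: since $Af=g$, we obtain $g = Af \in A\Kc{A}{g}^V$, and by linearity
\[
\K{A}{g} = \mathrm{span}\{g\} + A\K{A}{g} \subset A\Kc{A}{g}^V\,.
\]
Combining this lower containment with the upper containment $A\Kc{A}{g}^V \subset \Kc{A}{g}$ from Proposition~\ref{prop:invariance} yields
\[
\K{A}{g} \subset A\Kc{A}{g}^V \subset \Kc{A}{g}\,,
\]
and since $\K{A}{g}$ is by definition dense in $\Kc{A}{g}$ in the $\cH$-norm, taking $\cH$-closures forces $A\Kc{A}{g}^V$ to be dense in $\Kc{A}{g}$.

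For part (ii), the forward implication is exactly part (i) (which used no spectral hypothesis), so only the converse requires the assumption $0 \in \rho(A)$. Here I would invoke Lemma~\ref{lem:lemma1} to conclude that $A$ is a homeomorphism from $V$ onto $\cH$, equivalently that $A^{-1} \in \mathcal{B}(\cH,V)$. Assuming $A\Kc{A}{g}^V$ is dense in $\Kc{A}{g}$ and noting that $g = A^0 g \in \Kc{A}{g}$, I would select a sequence $(v_n)_n$ in $\Kc{A}{g}^V$ with $Av_n \to g$ in $\cH$. Applying the continuous inverse transfers this $\cH$-convergence into $V$-convergence, namely $v_n = A^{-1}(Av_n) \to A^{-1}g = f$ in the graph norm, the last equality holding because $f$ is the unique solution of $Af=g$. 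Since $\Kc{A}{g}^V$ is closed in $V$, the limit $f$ lies in $\Kc{A}{g}^V$, which is the desired conclusion.

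The only genuinely delicate point is this convergence transfer in the converse of (ii): mere $\cH$-density of $A\Kc{A}{g}^V$ in $\Kc{A}{g}$ produces a sequence whose images converge to $g$ in $\cH$, but does not by itself place $f$ inside the $V$-closure; it is the boundedness of $A^{-1}$ as a map $\cH \to V$, equivalently the homeomorphism property guaranteed by $0 \in \rho(A)$ through Lemma~\ref{lem:lemma1}, that upgrades $\cH$-convergence of the images to $V$-convergence of the preimages. In part (i), by contrast, the only thing to notice is that the hypothesis $f \in \Kc{A}{g}^V$ supplies precisely the single generator $g = A^0 g$ that $A\K{A}{g}$ omits.
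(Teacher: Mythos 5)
Your proof is correct and follows essentially the same route as the paper's: part (i) is the sandwich $\K{A}{g} \subset A\Kc{A}{g}^V \subset \Kc{A}{g}$, where the hypothesis $f \in \Kc{A}{g}^V$ supplies the $k=0$ generator $g = Af$, and part (ii) uses Lemma~\ref{lem:lemma1} to transfer the $\cH$-convergence $Av_n \to g$ into the graph-norm convergence $v_n = A^{-1}(Av_n) \to f$, concluding by closedness of $\Kc{A}{g}^V$ in $V$. The only cosmetic difference is that the paper first notes the sequence $(v_n)_{n\in\N}$ is Cauchy in $V$ before identifying its limit as $f$, whereas you identify the limit directly; the substance is identical.
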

\begin{proof}
We prove (i). Indeed $A \K{A}{g} \subset A \Kc{A}{g}^V \subset \Kc{A}{g}$ and $A^kg \in A \Kc{A}{g}^V$ for all $k \in \N_0$ as $f \in \Kc{A}{g}^V$. Thus $\K{A}{g} \subset A\Kc{A}{g}^V$. Yet, $A\Kc{A}{g}^V \subset \Kc{A}{g}$, so the conclusion follows.

To prove (ii) we only consider the `if' implication. Let $A\Kc{A}{g}^V$ be dense in $\Kc{A}{g}$, and choose a sequence $(v_n)_{n \in \N} \subset \Kc{A}{g}^V$ such that $\Vert Av_n - g\Vert_\cH \xrightarrow{n \to \infty} 0$. As $A^{-1} \in \mathcal{B}(\cH,V)$ by Lemma~\ref{lem:lemma1} we see that $(v_n)_{n \in \N}$ is Cauchy in $V$, and therefore $v_n \xrightarrow{n \to \infty} v \in \Kc{A}{g}^V$. Furthermore,
\[\Vert A^{-1}(Av_n - g)\Vert_{A} = \Vert v_n - f\Vert_{A} \leqslant \Vert A^{-1}\Vert_{\mathcal{B}(\cH,V)} \Vert Av_n - g\Vert_\cH \xrightarrow{n\to\infty} 0\,.\]
Therefore, $v = f$ and so $f \in \Kc{A}{g}^V$.
\end{proof}

We now define a structure central to the notion of Krylov solvability known as the \emph{Krylov intersection}. We do not follow the original definition as provided in \cite{CM-2019_ubddKrylov}, but instead we propose a new structure in such a way as to avoid certain limitations in \cite{CM-2019_ubddKrylov} that naturally arise as a consequence of the underlying operator not necessarily being defined on all of $\cH$.

\begin{definition}\label{def:KrylovInt}
Let $A$ be a closed linear operator on $\cH$, and $g \in C^\infty(A)$. We define the Krylov intersection $\Kint{A}{g}$ as follows
\begin{equation}\label{eq:KrylovInt}
\Kint{A}{g} := \Kc{A}{g} \cap A \big(\K{A}{g}^{\perpV}\big)\,.
\end{equation}
\end{definition}

\begin{theorem}\label{th:KrylovInt}
Let $A$ be a closed linear operator on $\cH$, and let $g \in C^\infty(A)$ be such that $g \in \ran A$. We have the following.
\begin{itemize}
	\item[(i)] If $\Kint{A}{g} = \{0\}$, then there exists a solution $f$ to the inverse linear problem $A f = g$ such that $f \in \Kc{A}{g}^V$.
	\item[(ii)] If in addition we have that $0 \in \rho(A)$, then the solution $f$ to the inverse linear problem $Af=g$,  $f \in \Kc{A}{g}^V$ if and only if $\Kint{A}{g} = \{0\}$.
\end{itemize}
\end{theorem}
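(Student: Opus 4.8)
The plan is to work entirely inside the Hilbert space $V = (\dom{A}, \Vert\cdot\Vert_A)$, where the closed subspace $\Kc{A}{g}^V$ yields the orthogonal decomposition $V = \Kc{A}{g}^V \oplus \K{A}{g}^{\perpV}$ (note that $\K{A}{g}^{\perpV} = (\Kc{A}{g}^V)^{\perpV}$, since taking $\perpV$ automatically closes). Because $g \in \ran A$, there is a solution $f \in \dom{A} = V$ of $Af = g$, and the strategy is to show that its orthogonal projection onto $\Kc{A}{g}^V$ is itself a solution precisely when the Krylov intersection collapses to $\{0\}$.

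For part (i), I would decompose $f = f_0 + f_1$ with $f_0 \in \Kc{A}{g}^V$ and $f_1 \in \K{A}{g}^{\perpV}$, so that $g = Af_0 + Af_1$. By Proposition~\ref{prop:invariance} we have $Af_0 \in \Kc{A}{g}$; since also $g \in \K{A}{g} \subset \Kc{A}{g}$, it follows that $Af_1 = g - Af_0 \in \Kc{A}{g}$. But $Af_1 \in A\big(\K{A}{g}^{\perpV}\big)$ by construction, so $Af_1 \in \Kint{A}{g}$. The hypothesis $\Kint{A}{g} = \{0\}$ then forces $Af_1 = 0$, whence $Af_0 = g$ with $f_0 \in \Kc{A}{g}^V$, the sought Krylov solution.

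For part (ii), the \emph{if} direction follows from (i) together with uniqueness: $0 \in \rho(A)$ makes $A$ injective, so the Krylov solution produced by (i) is \emph{the} solution. For \emph{only if}, I would assume $f \in \Kc{A}{g}^V$ and take an arbitrary $w = Au \in \Kint{A}{g}$ with $u \in \K{A}{g}^{\perpV}$ and $w \in \Kc{A}{g}$. By Proposition~\ref{prop:technicalprop}(i), $A\Kc{A}{g}^V$ is dense in $\Kc{A}{g}$, so there exist $v_n \in \Kc{A}{g}^V$ with $Av_n \to w$ in $\cH$. Lemma~\ref{lem:lemma1} gives $A^{-1} \in \mathcal{B}(\cH, V)$, hence $v_n = A^{-1}(Av_n) \to A^{-1}w = u$ in $V$; as $\Kc{A}{g}^V$ is $V$-closed this places $u \in \Kc{A}{g}^V \cap \K{A}{g}^{\perpV} = \{0\}$, so $w = Au = 0$ and $\Kint{A}{g} = \{0\}$.

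The conceptual crux, and the payoff of passing to $V$, is the observation in (i) that the cross term $Af_1$ lands exactly in $\Kint{A}{g}$: this is what lets the graph-norm orthogonal decomposition collapse to a genuine Krylov solution, and it is precisely what removes the awkward domain hypothesis (b) of Proposition~\ref{prop:Pk_requirements}. The remaining work is bookkeeping that must nonetheless be done carefully: confirming $g \in \Kc{A}{g}$ and $f \in V$ so that the $V$-decomposition applies, and, in (ii), transferring the convergence $Av_n \to w$ in $\cH$ into convergence $v_n \to u$ in $V$, which hinges entirely on the two-sided boundedness of $A^{-1}$ between $\cH$ and $V$ supplied by Lemma~\ref{lem:lemma1}.
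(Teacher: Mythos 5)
Your proof is correct and follows essentially the same route as the paper: your decomposition $f = f_0 + f_1$ is exactly the paper's $f = P_\mathcal{K}^V f + (\1_V - P_\mathcal{K}^V)f$, and your part (ii) argument via Proposition~\ref{prop:technicalprop}(i) and the boundedness of $A^{-1} \in \mathcal{B}(\cH,V)$ from Lemma~\ref{lem:lemma1} is precisely the paper's. No gaps to report.
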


\begin{remark}\label{rem:comparison_unbddpaper}
In comparison to Proposition~\ref{prop:Pk_requirements} we have avoided the need to ensure that certain solution(s) to the inverse linear problem remain in the domain under the action of the projection operation onto the closure of $\K{A}{g}$ in $\cH$, a condition which may be theoretically and practically difficult to verify. This is overcome by using the graph norm topology (i.e., $V$) and projection operators on $V$ as will become apparent in the course of the proof of Theorem~\ref{th:KrylovInt}. This presents an advantage to the structural analysis presented in the previous study \cite{CM-2019_ubddKrylov}. 
\end{remark}

\begin{remark}\label{rem:graphnormclosure_advantage}
Though by some measure it may seem a disadvantage to search for solutions of the inverse linear problem in the closure of $\K{A}{g}$ in the space $V$ rather than $\cH$, as $\Kc{A}{g}^V \subset \Kc{A}{g}$, it is worth noting a practical advantage of doing so. In the application of Krylov based algorithms, a sequence $(f_n)_{n \in \N}$ is constructed in $\K{A}{g}$ to approximate a solution $f$ to \eqref{eq:lininv}. In practical situations the solutions are a-priori unknown, and we only have access to the so-called residual $Af_n - g$ for each $n \in \N$. Therefore the knowledge that this vanishes is practically relevant in order to determine conditions under which to stop the algorithm as it has come to some sort of `acceptable' level of convergence. Indeed, the existence of a solution $f$ to \eqref{eq:lininv} in $\Kc{A}{g}^V$ always guarantees the existence of a sequence $(\tilde{f}_n)_{n \in \N}$ in $\K{A}{g}$ that converges in graph norm to $f$, and thus the vanishing of $A\tilde{f}_n - g$. This is, however, not necessarily true from the sole information that there exists a solution $f$ to \eqref{eq:lininv} in $\Kc{A}{g}$.
\end{remark}

\begin{proof}[Proof of Theorem~\ref{th:KrylovInt}]
We begin by proving (i). Let $P_\mathcal{K}^V:V \to V$ denote the orthogonal projection operator (in terms of the scalar product $\langle \cdot, \cdot \rangle_V$) onto the subspace $\Kc{A}{g}^V \subset V$, and let $\1_V$ be the identity on $V$. This operator $P_\mathcal{K}^V$ is continuous from $V$ to $V$. We may always decompose a solution $f$ to $Af = g$ as $f = P_\mathcal{K}^Vf + (\1_V - P_\mathcal{K}^V)f$. Then 
\[Af = g = A P_\mathcal{K}^Vf + A (\1_V - P_\mathcal{K}^V)f\,.\] 
As $g \in \Kc{A}{g}^V \subset \Kc{A}{g}$ and $A P_\mathcal{K}^Vf \in \Kc{A}{g}$ (Proposition~\ref{prop:invariance}), we have $A (\1_V - P_\mathcal{K}^V)f \in \Kc{A}{g}$. Yet, we also know that $(\1_V - P_\mathcal{K}^V)f \in \K{A}{g}^{\perpV}$, so that $A (\1_V - P_\mathcal{K}^V)f \in A \big( \K{A}{g}^{\perpV}\big)$ and thus $A (\1_V - P_\mathcal{K}^V)f = 0$. This implies $A P_\mathcal{K}^V f = g$, and therefore $\tilde{f} = P_\mathcal{K}^V f \in \Kc{A}{g}^V$ is a solution to the inverse linear problem.

We now prove the `only if' part of (ii). Let the solution $f$ to $Af = g$ be in $\Kc{A}{g}^V$, and suppose that $w \in \Kint{A}{g}$. Then there exists a unique $v \in \K{A}{g}^{\perpV}$ such that $w = Av$. By Proposition~\ref{prop:technicalprop} we know that $A \Kc{A}{g}^V$ is dense in $\Kc{A}{g}$, and therefore there exists a sequence $(x_n)_{n\in\N} \subset \Kc{A}{g}^V$ such that $\Vert Ax_n - w\Vert_\cH \to 0$ as $n \to \infty$. Thus
\[\Vert x_n - v\Vert_{A} = \Vert A^{-1}(Ax_n - w)\Vert_{A} \leqslant \Vert A^{-1}\Vert_{\mathcal{B}(\cH,V)}\Vert Ax_n - w\Vert_\cH \xrightarrow{n\to\infty} 0\,.\]
Therefore $x_n \to v$ in $\Vert \cdot\Vert_{A}$, and this implies $v \in \Kc{A}{g}^V$ as well, thus $v = 0$.
\end{proof}

We shall now present a numerical indicator to clearly show us whether subspaces are trivially or non-trivially intersecting. We include such a tool as a first step towards a practical numerical indicator of the triviality of $\Kint{A}{g}$ that can be monitored in computations.
%
%

\begin{definition}\label{def:subspace_dist2}
Let $M$ and $N$ be subspaces of $\cH$. We define the range of `separations' between the subspaces as
\begin{equation}\label{eq:subspaceRANGE}
\mathcal{S}(M,N) := \left\{ \left. \inf_{\substack{u \in M\,,\\ \Vert u\Vert_\cH = 1}} \Vert u - v\Vert_\cH \, \right \vert \, v \in N\,, \, \Vert v\Vert_\cH = 1 \right\}\,.
\end{equation}
\end{definition}

\begin{remark}\label{rem:subspacerange}
It is clear that $\mathcal{S}(M,N) \subset [0,\sqrt{2}]$.
\end{remark}

\begin{proposition}\label{prop:subspaceDIST2}
Let $M$ and $N$ be subspaces of $\cH$. Then $\overline{M} \cap N = \{0\}$ if and only if $0 \notin \mathcal{S}(M,N)$.
\end{proposition}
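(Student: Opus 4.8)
The plan is to unwind the definition \eqref{eq:subspaceRANGE} of $\mathcal{S}(M,N)$ and then prove the equivalence by two direct implications, the only real content being a normalization argument that lets one pass freely between norm-one vectors and arbitrary convergent sequences. First I would record the reformulation: by \eqref{eq:subspaceRANGE}, $0 \in \mathcal{S}(M,N)$ holds if and only if there exists a unit vector $v \in N$ (that is, $\Vert v \Vert_\cH = 1$) with $\inf_{u \in M,\, \Vert u\Vert_\cH = 1} \Vert u - v\Vert_\cH = 0$, equivalently a sequence $(u_n)_{n\in\N} \subset M$ of unit vectors with $u_n \to v$ in $\cH$.

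For the forward direction, assume $0 \in \mathcal{S}(M,N)$ and take such a unit vector $v$ and such a sequence $(u_n)$. Then $v = \lim_n u_n \in \overline{M}$, while also $v \in N$ and $\Vert v\Vert_\cH = 1 \neq 0$; hence $v$ is a nonzero element of $\overline{M}\cap N$, so $\overline{M}\cap N \neq \{0\}$. Reading this contrapositively yields one half of the stated biconditional.

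For the reverse direction, assume $\overline{M}\cap N \neq \{0\}$ and choose $0 \neq w \in \overline{M}\cap N$. Since $\overline{M}$ is the closure of a subspace, hence itself a subspace, and $N$ is a subspace, the rescaled vector $v := w/\Vert w\Vert_\cH$ again lies in $\overline{M}\cap N$ and satisfies $\Vert v\Vert_\cH = 1$. As $v \in \overline{M}$, I pick $(m_n) \subset M$ with $m_n \to v$; then $\Vert m_n\Vert_\cH \to \Vert v\Vert_\cH = 1$, so $m_n \neq 0$ for all large $n$ and $u_n := m_n/\Vert m_n\Vert_\cH \in M$ is a unit vector with $u_n \to v$. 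Thus the infimum in \eqref{eq:subspaceRANGE} associated to this $v$ equals $0$, giving $0 \in \mathcal{S}(M,N)$.

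The only delicate point — and the step I expect to require the most care — is precisely this normalization passage: dividing a convergent sequence by its norm is legitimate here because the limit has norm one (so the norms are bounded away from zero for large $n$), and the rescaled vectors remain in $M$ and in $\overline{M}$ because these are subspaces. It is also worth checking the degenerate case $M = \{0\}$ for consistency: then $M$ has no unit vectors, the infimum in \eqref{eq:subspaceRANGE} is taken over the empty set and is thus never $0$, so $0 \notin \mathcal{S}(M,N)$, in agreement with $\overline{M}\cap N = \{0\}$.
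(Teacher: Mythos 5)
Your proof is correct and takes essentially the same route as the paper's: one direction by extracting a sequence of unit vectors in $M$ converging to a unit vector of $N$ (hence producing a nonzero element of $\overline{M}\cap N$), the other by normalizing a nonzero element of $\overline{M}\cap N$ and approximating it by normalized elements of $M$. If anything, your write-up is slightly tighter — you make explicit the normalization step ($u_n := m_n/\Vert m_n\Vert_\cH$, legitimate since $\Vert m_n\Vert_\cH \to 1$) that the paper's ``if'' direction glosses over, and you replace the paper's redundant Cauchy-sequence argument with the direct observation that $u_n \to v$.
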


\begin{proof}
We begin with the `if' statement first. By contrapositive suppose that there is some $w \in \overline{M} \cap N$ with $\Vert w\Vert_\cH = 1$. Fix $\varepsilon > 0$. There exists some $u_\varepsilon \in M$ with $\Vert u_\varepsilon\Vert_\cH = 1$ such that $\Vert u_\varepsilon - w\Vert_\cH < \varepsilon$ as $w \in \overline{M}$. Therefore, as $\varepsilon$ may be chosen arbitrarily small, it is immediate that
\[\inf_{\substack{u \in M\,,\\ \Vert u\Vert_\cH=1}} \Vert u - w\Vert_\cH = 0\,,\]
which implies that $0 \in \mathcal{S}(M,N)$.

We now complete the proof by showing the `only if' part. We also prove this part by contrapositive, so we assume that $0 \in \mathcal{S}(M,N)$ and we shall prove that $\overline{M}\cap N \supsetneq \{0\}$. Indeed, take some $v \in N$ such that $\Vert v\Vert_\cH = 1$ and
\[\inf_{\substack{u \in M\,, \\ \Vert u\Vert_\cH = 1}} \Vert u - v\Vert_\cH = 0\,.\]
For all $n \in \N$, we choose some $u_n \in M$ such that $\Vert u_n\Vert_\cH = 1$ and $\Vert u_n - v\Vert_\cH \leqslant \frac{1}{n}$. Then
\[\Vert u_n - u_m\Vert_\cH \leqslant \Vert u_n - v\Vert_\cH + \Vert u_m - v\Vert_\cH \leqslant \frac{1}{n} + \frac{1}{m}\,,\]
meaning that $(u_n)_{n \in \N} \subset M$ is a Cauchy sequence, and therefore $u_n \xrightarrow{n\to\infty} u \in \overline{M}$. Clearly $u = v$, meaning that $v \in \overline{M} \cap N$.
\end{proof}

This has an immediate application to the Krylov intersection, as the next theorem shows.
\begin{theorem}\label{th:trivialKint_distances}
Let $A: \cH \to \cH$ be a closed linear operator on Hilbert space $\cH$, and $g \in C^\infty(A)$. Then $\Kint{A}{g} = \{0\}$ if and only if \[0 \notin \mathcal{S}\Big(\K{A}{g}, A \big(\K{A}{g}^{\perpV}\big)\Big)\,.\]
\end{theorem}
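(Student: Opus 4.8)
The plan is to reduce Theorem~\ref{th:trivialKint_distances} directly to Proposition~\ref{prop:subspaceDIST2} by making the right identification of subspaces $M$ and $N$. Recall that the Krylov intersection is defined as $\Kint{A}{g} = \Kc{A}{g} \cap A\big(\K{A}{g}^{\perpV}\big)$. The natural candidates are $M = \K{A}{g}$ and $N = A\big(\K{A}{g}^{\perpV}\big)$, since then $\overline{M} = \Kc{A}{g}$ and the conclusion $\overline{M} \cap N = \{0\}$ becomes precisely $\Kc{A}{g} \cap A\big(\K{A}{g}^{\perpV}\big) = \{0\}$, i.e., $\Kint{A}{g} = \{0\}$. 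So first I would verify that both $M$ and $N$ are indeed linear subspaces of $\cH$: $\K{A}{g}$ is a subspace by definition \eqref{eq:KrylovSubspace}, and $A\big(\K{A}{g}^{\perpV}\big)$ is the image under the linear map $A$ of the subspace $\K{A}{g}^{\perpV} \subset V$, hence a subspace of $\cH$. This confirms the hypotheses of Proposition~\ref{prop:subspaceDIST2} are met.

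With those identifications in place, the proof is essentially a one-line invocation. By Proposition~\ref{prop:subspaceDIST2} applied to $M = \K{A}{g}$ and $N = A\big(\K{A}{g}^{\perpV}\big)$, we have $\overline{M} \cap N = \{0\}$ if and only if $0 \notin \mathcal{S}(M,N)$. Since $\overline{M} \cap N = \Kc{A}{g} \cap A\big(\K{A}{g}^{\perpV}\big) = \Kint{A}{g}$ by Definition~\ref{def:KrylovInt}, this reads exactly as $\Kint{A}{g} = \{0\}$ if and only if $0 \notin \mathcal{S}\big(\K{A}{g}, A(\K{A}{g}^{\perpV})\big)$, which is the desired statement.

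The one subtlety I would be careful about is the closure convention. Proposition~\ref{prop:subspaceDIST2} uses $\overline{M}$, the closure in the \emph{ambient} Hilbert space $\cH$ (consistent with the $\Vert\cdot\Vert_\cH$-norms appearing throughout Definition~\ref{def:subspace_dist2} and its proof), whereas the symbol $\K{A}{g}^{\perpV}$ involves the $V$-orthogonal complement. There is no conflict here: the closure being taken in Proposition~\ref{prop:subspaceDIST2} is applied only to $M = \K{A}{g}$, giving $\Kc{A}{g}$ (the $\cH$-closure, which is exactly what enters the Krylov intersection per Definition~\ref{def:KrylovInt}), while the set $N = A\big(\K{A}{g}^{\perpV}\big)$ is used without closure. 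Thus the $V$-topology only enters through the \emph{definition} of the subspace $N$, not through any closure operation in the application of Proposition~\ref{prop:subspaceDIST2}.

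I do not anticipate a genuine obstacle in this argument, since the real content lies in Proposition~\ref{prop:subspaceDIST2}, which has already been established; the theorem is a clean corollary obtained by specializing $M$ and $N$ to the two subspaces whose intersection defines $\Kint{A}{g}$. The only thing requiring attention is to state the matching of subspaces explicitly and to confirm that $A\big(\K{A}{g}^{\perpV}\big)$ qualifies as a subspace of $\cH$ so that Proposition~\ref{prop:subspaceDIST2} applies verbatim.
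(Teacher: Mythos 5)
Your proposal is correct and follows exactly the paper's own proof, which consists of invoking Proposition~\ref{prop:subspaceDIST2} with $M = \K{A}{g}$ and $N = A\big(\K{A}{g}^{\perpV}\big)$ together with the observation that both are linear subspaces of $\cH$. Your additional remarks (that $\overline{M} = \Kc{A}{g}$ is the $\cH$-closure entering Definition~\ref{def:KrylovInt}, and that the $V$-topology enters only through the definition of $N$) are accurate elaborations of the same one-line argument.
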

\begin{proof}
Use Proposition~\ref{prop:subspaceDIST2}, and the fact that $\K{A}{g}$, $A \big(\K{A}{g}^{\perpV}\big)$ are linear subspaces.
\end{proof}

\section{A review of structural and perturbative properties of Krylov subspaces}\label{sec:review_struct_perturb}
In this section we review some of the important structural and perturbative properties of Krylov subspaces that were revealed in the studies \cite{CMN-2018_Krylov-solvability-bdd,CM-2019_ubddKrylov,CM-krylov-perturbation-2021} and the monograph \cite{CM-book-Krylov-2022}. We also present the new Proposition~\ref{prop:convergenceofprojections} linking the convergence of subspaces in the weak-gap metric to the strong operator topology convergence of their orthogonal projection operators. This material is mainly of a preparatory nature for the applications presented in Section~\ref{sec:selfadjoint_perturbations}.

\subsection{Krylov core condition, Krylov reducibility}\label{subsec:review_struct}~

Here we review structural properties more relevant to this study that are related to Krylov subspaces (aside from the Krylov intersection), as they were introduced in the recent study \cite{CM-2019_ubddKrylov} (where proofs are presented fully). 

We begin by defining `Krylov reducibility' in a general unbounded operator setting.
\begin{definition}[Definition~ 5.8, \cite{CM-2019_ubddKrylov}]\label{def:generalised_Kred}
Let $A$ be a densely defined and closed operator on a Hilbert space $\cH$ and let $g \in C^\infty(A)$. $A$ is said to be $\K{A}{g}$-reduced in the generalised sense (or informally, \emph{Krylov reduced}) when 
\begin{equation}\label{eq:defKred}
\begin{split}
A \big(\Kc{A}{g} \cap \mathcal{D}(A)\big) \subset \Kc{A}{g} \,,\\
A \big(\K{A}{g}^\perp \cap \mathcal{D}(A)\big) \subset \K{A}{g}^\perp\,.
\end{split}
\end{equation} 
\end{definition}
This phenomenon may not always be present. Indeed, already at the level of a bounded everywhere defined operator, the second inclusion does not necessarily hold (see \cite{CMN-2018_Krylov-solvability-bdd} for an example), though the first inclusion is always guaranteed for $A$ a bounded, everywhere defined operator. Also, the first inclusion may be false in the unbounded setting and in such cases we have the notion of `Krylov escape'. An explicit example of the Krylov escape phenomenon was constructed in \cite[Example~3]{CM-2019_ubddKrylov}. As opposed to the Krylov escape, the orthogonal complement of the Krylov subspace remains invariant under the action of the adjoint in the sense of the following proposition.
\begin{proposition}[Lemma~5.1, \cite{CM-2019_ubddKrylov}]\label{prop:invarianceunderadjoint}
Let $A$ be a densely defined and closed operator acting on a Hilbert space $\cH$ and let $g \in C^\infty(A)$. Then
\begin{equation}\label{eq:invarianceunderadjoint}
A^* \big(\K{A}{g}^\perp \cap \mathcal{D}(A^*)\big) \subset \K{A}{g}^\perp\,.
\end{equation}
\end{proposition}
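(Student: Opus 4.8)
The plan is to verify the inclusion generator-by-generator, reducing the entire claim to the elementary algebraic fact that multiplication by $A$ sends each Krylov generator $A^k g$ to the next one, $A^{k+1}g$, which again lies in $\K{A}{g}$.

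First I would fix an arbitrary $x \in \K{A}{g}^\perp \cap \dom{A^*}$ and recall that, by \eqref{eq:KrylovSubspace}, $\K{A}{g} = \mathspan{A^k g \mid k \in \N_0}$. Hence, to conclude $A^* x \in \K{A}{g}^\perp$ it suffices to check that $\scalar{A^* x}{A^k g} = 0$ for every $k \in \N_0$, since orthogonality to each generator extends by (anti)linearity and continuity of the scalar product to all of $\K{A}{g}$. Because $g \in C^\infty(A)$, each vector $A^k g$ lies in $\dom{A}$, so the defining relation of the adjoint is applicable and yields $\scalar{A^* x}{A^k g} = \scalar{x}{A(A^k g)} = \scalar{x}{A^{k+1} g}$.

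Finally, $A^{k+1} g \in \K{A}{g}$ by construction, while $x \in \K{A}{g}^\perp$, so the right-hand side vanishes for every $k \in \N_0$. This shows $A^* x \perp \K{A}{g}$, i.e.\ $A^* x \in \K{A}{g}^\perp$, which is the desired inclusion.

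I do not anticipate a genuine obstacle: the whole content is the adjoint identity combined with the forward-shift structure of the generators, and no limiting or domain-closure argument is needed. It is worth emphasizing the contrast with the forward invariance in the first line of \eqref{eq:defKred}, which can fail (the \emph{Krylov escape} phenomenon): there one must control the closure $\Kc{A}{g}$ under the unbounded action of $A$, whereas here one only ever invokes the purely algebraic identity $A(A^k g) = A^{k+1} g \in \K{A}{g}$ on the \emph{dense} generating set, never a closure. This is precisely why the adjoint invariance \eqref{eq:invarianceunderadjoint} holds unconditionally.
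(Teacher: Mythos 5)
Your proof is correct and coincides with the standard argument behind this proposition (which the paper itself only quotes from \cite{CM-2019_ubddKrylov} without reproving): one tests $A^*x$ against the generators, using the adjoint identity $\scalar{A^*x}{A^kg} = \scalar{x}{A^{k+1}g} = 0$ for $x \in \K{A}{g}^\perp \cap \dom{A^*}$, and extends to the span by linearity. Your closing remark is also apt: no closure of $\K{A}{g}$ is ever invoked, which is exactly why this inclusion holds unconditionally while the forward invariance in \eqref{eq:defKred} can fail (Krylov escape).
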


We now turn to another important structural property called the `Krylov core condition'.
\begin{definition}[Definition~5.4, \cite{CM-2019_ubddKrylov}]\label{def:Krylovcore}
Let $A$ be a densely defined closed operator acting on a Hilbert space $\cH$ and let $g \in C^\infty(A)$. The operator vector pair $(A,g)$ is said to satisfy the `Krylov core condition' if $\Kc{A}{g}^V = \Kc{A}{g} \cap \mathcal{D}(A)$.
\end{definition}

The Krylov core condition turns out to be a sufficient condition to avoid the occurrence of the Krylov escape phenomenon, as seen from Proposition~\ref{prop:invariance} and also from \cite[Lemma~5.6]{CM-2019_ubddKrylov}. It is also an important property that is exploited further in Section~\ref{sec:selfadjoint_perturbations} to establish important structural properties of Krylov subspaces in Theorem~\ref{th:Kreduced}.

\subsection{Weak-gap metric}\label{subsec:review_struct_weakgap}~

In this subsection we review the `weak-gap metric' that has been used to study the effects of perturbations in operator and or datum vector on the Krylov subspace \cite{CM-krylov-perturbation-2021}.

Naturally, the unit ball $B_\cH$ of \emph{any} separable Hilbert space $\cH$ is metrisable in the weak topology (see \cite[Theorem~3.29]{Brezis-FA-Sob-PDE} for further details) with a metric $\rho_w(x,\,y) := \Vert x - y\Vert_{w}$ such that $\Vert x\Vert_{w} \leqslant \Vert x\Vert_\cH$. More precisely, for a dense countable collection of vectors $(v_n)_{n \in \N} \subset B_\cH$, we have
\begin{equation}\label{eq:weaknorm}
\Vert x\Vert_{w} := \sum_{n = 1}^\infty \frac{1}{2^n} \vert \langle v_n, x\rangle_\cH \vert\,,
\end{equation}
where $\langle \cdot, \cdot\rangle_\cH$ is the scalar product on $\cH$ corresponding to the norm $\Vert\cdot\Vert_\cH$. Therefore the weak convergence of a sequence of vectors $(u_n)_{n \in \N} \subset B_\cH$ is tantamount to their convergence in the metric $\rho_w$.

The weak-gap metric applies not to individual vectors, but to the class of non\-empty weakly closed \emph{sets} contained in $B_\cH$
\begin{equation}\label{eq:BHclosedweak}
\mathscr{C}_w(\cH) := \{C \subset B_\cH \,|\, C \neq \emptyset\,, C \text{ weakly closed}\}\,.
\end{equation}
For $C,D \in \mathscr{C}_w(\cH)$ we have
\begin{equation}\label{eq:dw}
d_w(C,D) := \sup_{u \in C} \inf_{v \in D} \Vert u - v\Vert_{w}\,,
\end{equation}
which then gives us the \emph{weak-gap metric} on $\mathscr{C}_w(\cH)$
\begin{equation}\label{eq:dwhat}
\hat{d}_w(C,D) := \max \{ d_w(C,D), d_w(D,C)\}\,.
\end{equation}

We now list some important properties of the weak-gap metric on $\mathscr{C}_w(\cH)$ (without proof).
\begin{theorem}[Theorem~5.1, \cite{CM-krylov-perturbation-2021}]\label{th:fundamentalproperties_dwhat}
Let $\cH$ be a separable Hilbert space.
\begin{itemize}
	\item[(i)] $\hat{d}_w$ is a metric on $\mathscr{C}_w(\cH)$.
	\item[(ii)] The metric space $(\mathscr{C}_w(\cH), \hat{d}_w)$ is complete and compact.
	\item[(iii)] If $\hat{d}_w(C_n,C) \xrightarrow{n \to \infty} 0$ for some element $C$ and a sequence $(C_n)_{n \in \N}$ in $\mathscr{C}_w(\cH)$, then
		\begin{equation}\label{eq:weakgaplimit}
		C = \{u \in B_\cH \,|\, u_n \rightharpoonup u \text{ for a sequence } (u_n)_{n \in \N} \text{ with } u_n \in C_n \, \forall \, n \in \N\}.
		\end{equation}
	\item[(iv)] Suppose $f:\cH \to \cH$ is a weakly closed and weakly continuous map such that $f(B_\cH) \subset B_\cH$. If $\hat{d}_w(C_n,C) \xrightarrow{n \to \infty} 0$ for an element $C$ and sequence $(C_n)_{n \in \N}$ in $\mathscr{C}_w(\cH)$, then $\hat{d}_w(f(C_n),f(C)) \xrightarrow{n \to \infty} 0$
\end{itemize}
\end{theorem}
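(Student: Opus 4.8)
The plan is to recognize that the entire statement is the classical theory of the Hausdorff metric on the hyperspace of a compact metric space, transported into the present weak-topological setting. Concretely, since $\cH$ is separable, $\rho_w(x,y):=\Vert x-y\Vert_w$ metrizes the weak topology on $B_\cH$, and $B_\cH$ is weakly compact (Banach--Alaoglu in the reflexive case); hence $(B_\cH,\rho_w)$ is a \emph{compact} metric space. Because $B_\cH$ is itself weakly closed, a set $C\subset B_\cH$ is weakly closed in $\cH$ if and only if it is $\rho_w$-closed, so $\mathscr{C}_w(\cH)$ is exactly the collection of nonempty closed (equivalently, $\rho_w$-compact) subsets of $(B_\cH,\rho_w)$, and $\hat{d}_w$ is precisely the Hausdorff distance induced by $\rho_w$. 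Once this identification is made, (i)--(iv) reduce to standard hyperspace arguments, which I outline below. A recurring convenience is that every $C\in\mathscr{C}_w(\cH)$ is $\rho_w$-compact, so each inner infimum $\inf_{v\in D}\Vert u-v\Vert_w$ is attained and depends $\rho_w$-continuously on $u$; thus every supremum defining $d_w$ is attained and $d_w$ is finite (bounded by $\sqrt2$).

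For (i), nonnegativity and symmetry $\hat{d}_w(C,D)=\hat{d}_w(D,C)$ are immediate. For identity of indiscernibles, $\hat{d}_w(C,D)=0$ forces $\inf_{v\in D}\Vert u-v\Vert_w=0$ for every $u\in C$, i.e. $u$ lies in the $\rho_w$-closure of $D$; since $D$ is $\rho_w$-closed this gives $C\subset D$, and symmetrically $D\subset C$, so $C=D$. The triangle inequality is the usual two-step estimate: for $u\in C$ and any $v\in D$, $\inf_{e\in E}\Vert u-e\Vert_w\leqslant \Vert u-v\Vert_w+\inf_{e\in E}\Vert v-e\Vert_w$; minimizing over $v\in D$ and then taking the supremum over $u\in C$ yields $d_w(C,E)\leqslant d_w(C,D)+d_w(D,E)$, and applying this in both orders and taking maxima gives $\hat{d}_w(C,E)\leqslant\hat{d}_w(C,D)+\hat{d}_w(D,E)$.

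For (ii), compactness will follow from total boundedness together with completeness. For total boundedness, fix $\varepsilon>0$ and a finite $\varepsilon$-net $F=\{x_1,\dots,x_N\}$ of $(B_\cH,\rho_w)$; to each $C\in\mathscr{C}_w(\cH)$ associate the nonempty subset $S(C):=\{x_i\in F: \rho_w(x_i,C)\leqslant\varepsilon\}$ and check directly that $\hat{d}_w(C,S(C))\leqslant\varepsilon$, so that the finitely many nonempty subsets of $F$ (each a member of $\mathscr{C}_w(\cH)$) form a finite $\varepsilon$-net for $\hat d_w$. For completeness, given a $\hat{d}_w$-Cauchy sequence $(C_n)$, I would form the candidate limit $C:=\{x\in B_\cH:\rho_w(x,C_n)\to 0\}$ (equivalently the set of $\rho_w$-limits of selections $x_n\in C_n$) and show it is nonempty, $\rho_w$-closed, and satisfies $\hat{d}_w(C_n,C)\to 0$. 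This completeness step is the main technical obstacle: nonemptiness of $C$ requires extracting convergent selections using the compactness of $B_\cH$ and threading the Cauchy estimates carefully, whereas the remaining verifications are routine. Completeness plus total boundedness then yields compactness.

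For (iii), assuming $\hat{d}_w(C_n,C)\to 0$ I would prove both inclusions, using that on $B_\cH$ weak convergence is exactly $\rho_w$-convergence. If $u\in C$, then $\rho_w(u,C_n)\leqslant d_w(C,C_n)\to0$, so choosing $u_n\in C_n$ attaining this distance (compactness) gives $u_n\rightharpoonup u$. Conversely, if $u_n\in C_n$ with $u_n\rightharpoonup u$, then $\rho_w(u_n,C)\leqslant d_w(C_n,C)\to0$, and combining with $\rho_w(u_n,u)\to0$ yields $\rho_w(u,C)=0$, hence $u\in C$ by closedness. For (iv), the key point is that a weakly continuous $f$ with $f(B_\cH)\subset B_\cH$ is a $\rho_w$-continuous self-map of the compact space $(B_\cH,\rho_w)$, hence \emph{uniformly} $\rho_w$-continuous, while the weak-closedness hypothesis guarantees $f(C),f(C_n)\in\mathscr{C}_w(\cH)$. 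Then, given $\varepsilon>0$, choosing $\delta$ from uniform continuity and using that the distances between the compact sets $C_n,C$ are attained, one checks that $\hat{d}_w(C_n,C)<\delta$ forces $\hat{d}_w(f(C_n),f(C))\leqslant\varepsilon$, which gives the asserted convergence.
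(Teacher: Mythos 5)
The paper itself contains no proof of this theorem: it is explicitly stated ``without proof'' as a quotation of Theorem~5.1 of the cited perturbation paper \cite{CM-krylov-perturbation-2021}, so there is no internal argument to compare yours against; your proposal must stand on its own, and it does. The central identification is correct and is the natural one: separability makes the weak topology on $B_\cH$ metrizable by $\rho_w$, reflexivity (Banach--Alaoglu/Kakutani) makes $B_\cH$ weakly compact, and since $B_\cH$ is itself weakly closed, the weakly closed subsets of $B_\cH$ are exactly the $\rho_w$-closed (hence $\rho_w$-compact) ones; thus $(\mathscr{C}_w(\cH),\hat{d}_w)$ is precisely the hyperspace of the compact metric space $(B_\cH,\rho_w)$ with its Hausdorff metric. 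Granting that, your treatment of (i), (iii) and (iv) is complete (the attainment of the infima on compact sets and the uniform continuity of a $\rho_w$-continuous self-map of a compact metric space are exactly the right tools), and for (ii) your total-boundedness argument via finite subsets of an $\varepsilon$-net is correct, while the completeness step you flag is indeed the standard Hausdorff-metric completeness proof (candidate limit $C=\{x:\rho_w(x,C_n)\to 0\}$, nonempty by a Cauchy selection argument in the complete space $(B_\cH,\rho_w)$), so nothing essential is missing.

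One immaterial slip: for $x,y\in B_\cH$ one only has $\Vert x-y\Vert_w\leqslant\Vert x-y\Vert_\cH\leqslant 2$, so $d_w$ is bounded by $2$, not by $\sqrt{2}$; the $\sqrt{2}$ bound in Remark~\ref{rem:subspacerange} concerns the separation $\mathcal{S}(M,N)$ between unit spheres of subspaces in the $\cH$-norm, which is a different quantity. Since your argument only uses finiteness of $d_w$, this does not affect anything.
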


Before moving on to the application of the weak-gap metric to linear subspaces, we shall list some more properties that are useful later.
\begin{proposition}[Lemma~5.3, \cite{CM-krylov-perturbation-2021}]\label{prop:dwhat_triangleineq}
Let $C,D,E \in \mathscr{C}_w(\cH)$ for some separable Hilbert space $\cH$. Then
\begin{gather}
d_w(C,D) = 0 \text{ if and only if } C \subset D\,,\\
\hat{d}_w(C,D) = 0 \text{ if and only if } C = D\,,\\
d_w(C,E) \leqslant d_w(C,D) + d_w(D,E)\,,\\
\hat{d}_w(C,E) \leqslant \hat{d}_w(C,D) + \hat{d}_w(D,E)\,.
\end{gather}
\end{proposition}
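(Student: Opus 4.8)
The four assertions are exactly the standard facts that $d_w$ of \eqref{eq:dw} is a one-sided Hausdorff excess and $\hat{d}_w$ of \eqref{eq:dwhat} the associated symmetric Hausdorff distance, both built from the norm-like quantity $\wnorm{\cdot}$ of \eqref{eq:weaknorm}. The plan is to exploit only two structural ingredients. First, $\wnorm{\cdot}$ obeys the triangle inequality (it is a seminorm, in fact a norm, since the $(v_n)_{n\in\N}$ are dense), so that $\wnorm{x-z} \leqslant \wnorm{x-y} + \wnorm{y-z}$ for all $x,y,z \in \cH$. Second, on $B_\cH$ convergence in the metric $\rho_w$ is precisely weak convergence, as recorded just after \eqref{eq:weaknorm}, so that a $\rho_w$-limit of points of a weakly closed set again lies in that set. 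No compactness of $B_\cH$ is needed.

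First I would settle the two equivalences. The implication $C \subset D \Rightarrow d_w(C,D) = 0$ is immediate, since for each $u \in C \subset D$ one may take $v = u$ in the inner infimum of \eqref{eq:dw}. For the converse, suppose $d_w(C,D) = 0$; then for every $u \in C$ the inner infimum $\inf_{v \in D} \wnorm{u-v}$ vanishes, so there is a sequence $(v_k)_{k \in \N} \subset D$ with $\wnorm{u - v_k} \to 0$, i.e.\ $v_k \rightharpoonup u$. Because $D$ is weakly closed by \eqref{eq:BHclosedweak}, we get $u \in D$; as $u \in C$ was arbitrary, $C \subset D$. This gives the first equivalence, and the second follows directly from \eqref{eq:dwhat}: $\hat{d}_w(C,D) = 0$ iff both $d_w(C,D) = 0$ and $d_w(D,C) = 0$, i.e.\ iff $C \subset D$ and $D \subset C$, i.e.\ iff $C = D$.

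Next I would establish the triangle inequality for $d_w$ by the usual order of inf/sup manipulations. Fixing $u \in C$ and $w \in D$, the triangle inequality for $\wnorm{\cdot}$ gives $\wnorm{u-v} \leqslant \wnorm{u-w} + \wnorm{w-v}$ for every $v \in E$; taking the infimum over $v \in E$ (and noting $\wnorm{u-w}$ is constant in $v$) yields $\inf_{v \in E}\wnorm{u-v} \leqslant \wnorm{u-w} + \inf_{v \in E}\wnorm{w-v} \leqslant \wnorm{u-w} + d_w(D,E)$, the last step using $w \in D$. Since this holds for every $w \in D$, the infimum over $w \in D$ gives $\inf_{v \in E}\wnorm{u-v} \leqslant d_w(C,D) + d_w(D,E)$ after bounding $\inf_{w \in D}\wnorm{u-w} \leqslant d_w(C,D)$ (as $u \in C$); the supremum over $u \in C$ then produces $d_w(C,E) \leqslant d_w(C,D) + d_w(D,E)$. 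The inequality for $\hat{d}_w$ follows by applying this bound to both $d_w(C,E)$ and $d_w(E,C)$, replacing each excess by the larger symmetric quantity $\hat{d}_w$, and taking the maximum.

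The computations are routine; the only genuine subtlety is the converse half of the first equivalence, where one must invoke both the metrization of the weak topology on $B_\cH$ by $\rho_w$ and the weak closedness built into the definition \eqref{eq:BHclosedweak} in order to pass from a vanishing infimum to actual membership $u \in D$. One must also take care with the order of the inf/sup operations in the triangle inequality, since $\wnorm{u-w}$ is constant with respect to the variable $v$ over which the first infimum is taken, which is precisely what legitimates pulling it out of that infimum.
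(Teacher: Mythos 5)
Your proof is correct. Note that the paper itself states this proposition without proof, importing it verbatim as Lemma~5.3 of \cite{CM-krylov-perturbation-2021}, so there is no in-paper argument to compare against; your derivation is the standard one such a result rests on, and you handle the one genuinely delicate point correctly, namely using the metrization of the weak topology on $B_\cH$ by $\rho_w$ together with the weak closedness of $D$ (built into the definition \eqref{eq:BHclosedweak}) to convert the vanishing infimum $\inf_{v \in D}\wnorm{u-v}=0$ into actual membership $u \in D$.
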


\subsection{Convergence of linear subspaces in the weak-gap metric}\label{subsec:review_struct_convergenceweakgap}~

In order to exploit the weak-gap metric for closed linear subspaces of $\cH$, we describe convergence on the subsets naturally induced by the corresponding unit balls of the \emph{subspaces} of $\cH$ as elements of $(\mathscr{C}_w(\cH),\hat{d}_w)$. More precisely, given closed linear subspaces $M,N$ of a separable Hilbert space $\cH$, we identify by definition
\begin{equation}\label{eq:dwhat_subspace}
\hat{d}_w(M,N) \equiv \hat{d}_w(B_M,B_N)\,,
\end{equation}
with the right side defined by \eqref{eq:dwhat} and $B_M = M \cap B_\cH$, $B_N = N \cap B_\cH$. We write $\hat{d}_w(M_n,M) \xrightarrow{n \to \infty} 0$ to mean that $\hat{d}_w(B_{M_n},B_M) \xrightarrow{n \to \infty} 0$ for some $M$ and sequence $(M_n)_{n \in \N}$ closed linear subspaces of $\cH$. This naturally provides a metric topology and notion of convergence on the collection of closed subspaces
\begin{equation}\label{eq:linearclosed}
\mathscr{S}(\cH) := \{M \subset \cH \,|\, M \text{ closed linear subspace}\}\,,
\end{equation}
thus giving us the metric space $(\mathscr{S}(\cH),\hat{d}_w)$.

We now list the most important properties of $(\mathscr{S}(\cH),\hat{d}_w)$.
\begin{proposition}[Lemmas~6.1, 6.2, Proposition~6.3, \cite{CM-krylov-perturbation-2021}]\label{prop:SHdw_properties}
Let $\cH$ be a separable Hilbert space.
\begin{itemize}\label{prop:properties_dwhat_linearsubspace}
	\item[(i)] $(\mathscr{S}(\cH),\hat{d}_w)$ is a metric space.
	\item[(ii)] $(\mathscr{S}(\cH),\hat{d}_w)$ is not complete.
	\item[(iii)] If $\hat{d}_w(M_n,M) \xrightarrow{n \to \infty} 0$ for some $(M_n)_{n \in \N}$ and $M$ in $\mathscr{S}(\cH)$, then
	\begin{equation}
		M = \{u \in \cH \,|\, u_n \rightharpoonup u \text{ for a sequence } (u_n)_{n \in \N} \text{ with } u_n \in M_n \text{ for all } n \in \N\}\,.
	\end{equation}
\end{itemize}
\end{proposition}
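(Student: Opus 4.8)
The plan is to transfer everything from the ambient space $(\mathscr{C}_w(\cH),\hat{d}_w)$, whose completeness, compactness, and limit-characterisation are recorded in Theorem~\ref{th:fundamentalproperties_dwhat}, through the assignment $\iota:M \mapsto B_M = M \cap B_\cH$, using the equivalences of Proposition~\ref{prop:dwhat_triangleineq}. For (i) I would first check that $\iota$ maps $\mathscr{S}(\cH)$ into $\mathscr{C}_w(\cH)$: $B_M$ is nonempty (it contains $0$), lies in $B_\cH$, and is weakly closed, being the intersection of the convex closed -- hence weakly closed -- subspace $M$ with the weakly closed ball $B_\cH$. Then $\iota$ is injective, since $B_M = B_N$ forces every $x \in M$, after rescaling to norm $\leqslant 1$, into $B_N \subset N$, and as $N$ is a subspace $x \in N$; symmetry gives $M = N$. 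The metric axioms on $\mathscr{S}(\cH)$ then descend from those on $\mathscr{C}_w(\cH)$ (Theorem~\ref{th:fundamentalproperties_dwhat}(i)) via $\iota$, the identity of indiscernibles being exactly $\hat{d}_w(B_M,B_N)=0 \Leftrightarrow B_M=B_N$ (Proposition~\ref{prop:dwhat_triangleineq}) combined with injectivity. This part is essentially bookkeeping.

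For (iii), Theorem~\ref{th:fundamentalproperties_dwhat}(iii) applied to $B_{M_n} \to B_M$ yields $B_M = \{u \in B_\cH : u_n \rightharpoonup u \text{ for some } u_n \in B_{M_n}\}$, and I must remove the two norm constraints to reach $M = L$, where $L := \{u \in \cH : u_n \rightharpoonup u \text{ for some } u_n \in M_n\}$. The inclusion $M \subset L$ comes from taking $u \in M$, setting $s = \max(1,\|u\|_\cH)$ and $\hat{u} = u/s \in B_M$, pulling back $v_n \in B_{M_n} \subset M_n$ with $v_n \rightharpoonup \hat{u}$, and noting $s v_n \in M_n$ with $s v_n \rightharpoonup u$. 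Conversely, if $u_n \rightharpoonup u$ with $u_n \in M_n$, then $(u_n)$ is norm-bounded (weakly convergent sequences are bounded), so with $R = \max(1,\sup_n \|u_n\|_\cH)$ the vectors $R^{-1}u_n \in B_{M_n}$ converge weakly to $R^{-1}u$, which lies in $B_\cH$ by weak lower semicontinuity of the norm; Theorem~\ref{th:fundamentalproperties_dwhat}(iii) places $R^{-1}u \in B_M \subset M$, and closure of $M$ under scalars gives $u \in M$. Hence $M = L$.

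The main obstacle is (ii): I must produce a Cauchy sequence in $\mathscr{S}(\cH)$ whose limit in the complete ambient space $\mathscr{C}_w(\cH)$ is not a subspace unit ball. Fixing an orthonormal basis $(e_n)$, I would take $M_n := \mathrm{span}\{f_n\}$ with $f_n := \tfrac{1}{\sqrt{2}}(e_1 + e_{n+1})$. Since $e_{n+1} \rightharpoonup 0$, the weak limits of norm-$\leqslant 1$ sequences $\lambda_n f_n$ form exactly $C := \{\mu e_1 : |\mu| \leqslant \tfrac{1}{\sqrt{2}}\}$: testing against basis vectors forces any limit into $\mathrm{span}\{e_1\}$ with $|\mu| = \lim|\lambda_n|/\sqrt{2} \leqslant 1/\sqrt{2}$, and each such $\mu e_1$ is realised by the constant choice $\lambda_n = \sqrt{2}\,\mu$. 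By compactness of $(\mathscr{C}_w(\cH),\hat{d}_w)$, every subsequence of $(B_{M_n})$ has a convergent sub-subsequence whose limit, by Theorem~\ref{th:fundamentalproperties_dwhat}(iii), is this same set $C$; as all subsequential limits coincide, $B_{M_n} \to C$, so $(M_n)$ is $\hat{d}_w$-Cauchy.

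Finally I would argue $C \notin \iota(\mathscr{S}(\cH))$: it contains $\tfrac{1}{\sqrt{2}}e_1$ but not $e_1$, whereas for any closed subspace $M$ the membership $\tfrac{1}{\sqrt{2}}e_1 \in B_M$ forces $e_1 \in M$ and thus $e_1 \in B_M$. Hence $(M_n)$ is Cauchy with no limit in $\mathscr{S}(\cH)$, establishing non-completeness. The delicate point is justifying convergence of the \emph{whole} sequence rather than a subsequence, for which I rely on compactness together with the uniqueness of subsequential limits delivered by the weak-limit characterisation.
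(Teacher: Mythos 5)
Your proposal is correct, but note that there is nothing in this paper to compare it against: Proposition~\ref{prop:SHdw_properties} is stated as review material, imported without proof from \cite{CM-krylov-perturbation-2021} (Lemmas~6.1, 6.2 and Proposition~6.3 therein), so the proof burden lives entirely in that reference. On its own terms your argument is complete: for (i) the identification $M \mapsto B_M$ lands in $\mathscr{C}_w(\cH)$ (weak closedness of $B_M$ follows from convexity of $M$ plus weak closedness of $B_\cH$), is injective by rescaling, and pulls the metric axioms back from Theorem~\ref{th:fundamentalproperties_dwhat}(i) and Proposition~\ref{prop:dwhat_triangleineq}; for (iii) the two rescalings (using norm-boundedness of weakly convergent sequences and weak lower semicontinuity of the norm) correctly convert the unit-ball characterisation of Theorem~\ref{th:fundamentalproperties_dwhat}(iii) into the subspace statement; and for (ii) the tilting spans $M_n = \mathrm{span}\{\frac{1}{\sqrt{2}}(e_1 + e_{n+1})\}$ work: compactness of $(\mathscr{C}_w(\cH),\hat{d}_w)$ plus the fact that every convergent sub-subsequence of $(B_{M_n})$ has the same limit $C = \{\mu e_1 : |\mu| \leqslant \tfrac{1}{\sqrt{2}}\}$ forces $B_{M_n} \to C$, hence $(M_n)$ is $\hat{d}_w$-Cauchy, while uniqueness of limits in a metric space shows any would-be subspace limit $M$ would satisfy $B_M = C$, impossible since $\tfrac{1}{\sqrt{2}}e_1 \in C$ but $e_1 \notin C$. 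This is the natural route (and the same type of construction used in the cited source: unit-ball identification, rescaling, and a one-dimensional ``tilting'' counterexample whose ball-limit is a disk rather than a subspace ball); the one point worth keeping explicit, as you do, is that whole-sequence convergence in (ii) needs the compactness-plus-unique-subsequential-limit argument, or alternatively a direct estimate $\hat{d}_w(B_{M_n},C) \leqslant \tfrac{1}{\sqrt{2}}\Vert e_{n+1}\Vert_{w} \to 0$, which would shortcut that step.
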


\begin{remark}\label{rem:dwhat_lackofcompleteness}
Though there is a lack of completeness of $(\mathscr{S}(\cH),\hat{d}_w)$, it turns out that $\hat{d}_w$ is still useful as will be revealed in the rest of this article. Moreover, though there are Cauchy sequences $(M_n)_{n \in \N}$ in $(\mathscr{S}(\cH),\hat{d}_w)$ that do not converge to an element of $\mathscr{S}(\cH)$, the corresponding unit balls $B_{M_n}$ in fact \emph{do} still converge to some element of $\mathscr{C}_w(\cH)$ in $\hat{d}_w$ due to the completeness of $(\mathscr{C}_w(\cH),\hat{d}_w)$. In such cases it is clear that the limit in $\mathscr{C}_w(\cH)$ cannot be written as the unit ball of a closed linear subspace of $\cH$.
\end{remark}

Following this review, we now present some new results of a technical nature related to subspaces and convergence in the weak-gap metric.

\begin{lemma}\label{lem:nestedincreasingsets_convergence}
Let $\cH$ be a separable Hilbert space, and let $(C_n)_{n \in \N}$ be a sequence of weakly closed nested sets in $B_\cH$ such that $C_n \subset C_{n + 1}$. Then there exists a weakly closed set $C \subset B_\cH$ such that $C_n \xrightarrow{\hat{d}_w} C$ as $n \to \infty$.
\end{lemma}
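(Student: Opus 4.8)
The plan is to exhibit the limit explicitly as the weak closure of the union and then verify convergence directly in the weak-gap metric. Concretely, I would set $C := \topclose{\bigcup_{n \in \N} C_n}{w}$, the closure of the increasing union taken in the weak topology of $\cH$. Since each $C_n \subset B_\cH$ and $B_\cH$ is weakly closed, $C$ is a weakly closed subset of $B_\cH$; assuming the $C_n$ are nonempty (as is required for membership in $\mathscr{C}_w(\cH)$, and where the nesting forces the tail to be nonempty as soon as one set is), $C$ is nonempty, so $C \in \mathscr{C}_w(\cH)$ is a legitimate candidate limit. Throughout I would use that, on the separable Hilbert space $\cH$, weak convergence on $B_\cH$ is exactly convergence in the metric $\rho_w(x,y) = \wnorm{x - y}$, and that $B_\cH$ is weakly sequentially compact.

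The first (easy) half of the convergence is immediate: because $C_n \subset \bigcup_m C_m \subset C$ for every $n$, Proposition~\ref{prop:dwhat_triangleineq} gives $d_w(C_n, C) = 0$, so it remains only to control $d_w(C, C_n) = \sup_{u \in C} \inf_{v \in C_n} \wnorm{u - v}$ and show it tends to $0$. Then $\hat{d}_w(C_n, C) = \max\{0, d_w(C, C_n)\} \to 0$, which is the claim.

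I would establish $d_w(C, C_n) \to 0$ by contradiction. If it fails, there are $\varepsilon > 0$, a subsequence $(n_k)$, and vectors $u_k \in C$ with $\inf_{v \in C_{n_k}} \wnorm{u_k - v} \geqslant \varepsilon$ for all $k$. By weak sequential compactness of $B_\cH$, pass to a subsequence along which $u_{k_j} \rightharpoonup u$; since $C$ is weakly closed, $u \in C$, and $\wnorm{u_{k_j} - u} \to 0$. As $u \in C$ is a $\rho_w$-limit of points of the union, I can pick $w \in C_m$ for some fixed $m$ with $\wnorm{u - w} < \varepsilon/2$. For $j$ large enough we have both $n_{k_j} \geqslant m$, so that $w \in C_m \subset C_{n_{k_j}}$ by monotonicity, and $\wnorm{u_{k_j} - u} < \varepsilon/2$; the triangle inequality then yields $\wnorm{u_{k_j} - w} < \varepsilon$, contradicting $\inf_{v \in C_{n_{k_j}}} \wnorm{u_{k_j} - v} \geqslant \varepsilon$.

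The main obstacle is exactly this uniform approximation $d_w(C, C_n) \to 0$: monotonicity of the sets gives the reverse inclusion for free, but one must show that the \emph{whole} of $C$ is eventually approximated by $C_n$ uniformly over $u \in C$, not merely pointwise. The compactness of $B_\cH$ in the weak (metric) topology is what upgrades the pointwise density of $\bigcup_n C_n$ in $C$ to the required uniform statement, and the nesting $C_m \subset C_n$ for $m \leqslant n$ is what lets the single approximant $w$ be reused across the tail. An alternative, equally viable route would invoke the compactness of $(\mathscr{C}_w(\cH), \hat{d}_w)$ from Theorem~\ref{th:fundamentalproperties_dwhat}: every subsequence then has a sub-subsequence converging to some limit, which one identifies with $C$ via the characterization in Theorem~\ref{th:fundamentalproperties_dwhat}(iii), forcing the whole sequence to converge to $C$.
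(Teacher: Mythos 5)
Your proof is correct, but it follows a genuinely different route from the paper's. The paper never identifies the limit set: it invokes compactness of the hyperspace $(\mathscr{C}_w(\cH),\hat{d}_w)$ (Theorem~\ref{th:fundamentalproperties_dwhat}(ii)) to extract a subsequence $(C_{n_k})_{k\in\N}$ converging to some weakly closed $C$, and then upgrades this to convergence of the whole sequence by a sandwich argument: for large $n$ one picks $k,k'$ with $C_{n_k}\subset C_n\subset C_{n_{k'}}$, and the triangle inequality for $d_w$ combined with ``$d_w(D,E)=0$ if and only if $D\subset E$'' (Proposition~\ref{prop:dwhat_triangleineq}) yields $\hat{d}_w(C_n,C)<\varepsilon$. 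You instead exhibit the limit explicitly as $C=\topclose{\bigcup_{n}C_n}{w}$, obtain $d_w(C_n,C)=0$ for free from the inclusion, and prove the hard inequality $d_w(C,C_n)\to 0$ by contradiction, using only the weak sequential compactness of $B_\cH$ and the metrizability of its weak topology; the hyperspace compactness --- the heaviest imported ingredient of the paper's proof --- appears only in the alternative you sketch at the end, which is essentially the paper's argument. Your route buys an explicit description of the limit, which the paper's proof does not supply and which must be recovered later anyway via Theorem~\ref{th:fundamentalproperties_dwhat}(iii) (this is exactly what happens in the proof of Proposition~\ref{prop:KntoK}, where the limit of the nested Krylov subspaces is identified by hand); with your construction such identifications are immediate. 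The paper's route buys symmetry: the same sandwich argument proves the decreasing case verbatim (Lemma~\ref{lem:nesteddecreasingsets_convergence} is dispatched with ``the proof is identical''), whereas your construction has no equally immediate analogue there, since for decreasing sets the natural candidate limit is $\bigcap_n C_n$ and the roles of the easy and hard inequalities switch. One small point of rigour: the supremum defining $d_w(C,C_{n_k})$ need not be attained, so in the contradiction setup you should pick $u_k$ with $\inf_{v\in C_{n_k}}\wnorm{u_k-v}\geqslant\varepsilon/2$, where $\varepsilon$ witnesses the failure of convergence; nothing downstream changes.
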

\begin{proof}
We use the facts that the collection of weakly closed sets in $B_\cH$ equipped with the metric $\hat{d}_w$ is compact (Theorem~\ref{th:fundamentalproperties_dwhat}(ii)), and both $d_w$ and $\hat{d}_w$ satisfy the triangle inequality (Proposition~\ref{prop:dwhat_triangleineq}). We also note that for any two weakly closed sets $D,E \subset B_\cH$, we have $d_w(D,E) = 0$ if and only if $D \subset E$ (Proposition~\ref{prop:dwhat_triangleineq}).

Let $(C_n)_{n \in \N}$ be a sequence of weakly closed sets as stated in the lemma. From the fact that $(\mathscr{C}_w(\cH), \hat{d}_w)$ is compact, we know there exists a subsequence $(C_{n_k})_{k \in \N}$ that converges to some weakly closed set $C \subset B_\cH$ in the metric $\hat{d}_w$. Let $\varepsilon > 0$, so there exists $N_\varepsilon \in \N$ such that for all $k,k' \geqslant N_\varepsilon$
\begin{align*}
\hat{d}_w(C_{n_k},C_{n_{k'}}) & < \varepsilon \,, \text{ and } \\
\hat{d}_w(C_{n_k},C) & < \varepsilon\,.
\end{align*}

Let $N_\varepsilon' = n_{N_\varepsilon}$ and choose any $n \geqslant N_\varepsilon'$. Then there exist $k,k' \geqslant N_\varepsilon$ such that $C_{n_k} \subset C_n \subset C_{n_{k'}}$. Using the triangle inequality for $d_w$,
\begin{align*}
d_w(C_n,C) & \leqslant d_w(C_n,C_{n_{k'}}) + d_w(C_{n_{k'}}, C) \\
 & = 0 + d_w(C_{n_{k'}},C) < \varepsilon\,, \\
d_w(C,C_n) & \leqslant d_w(C,C_{n_k}) + d_w(C_{n_k},C_n) \\
 & = d_w(C,C_{n_k}) + 0 < \varepsilon\,.
\end{align*}
This implies that $\hat{d}_w(C,C_n) < \varepsilon$ for all $n \geqslant N_\varepsilon'$, and therefore we have the desired convergence.
\end{proof}

\begin{lemma}\label{lem:nesteddecreasingsets_convergence}
Let $\cH$ be a separable Hilbert space and let $(C_n)_{n \in \N}$ be a sequence of weakly closed nested sets in $B_\cH$ such that $C_n \supset C_{n + 1}$. Then there exists a weakly closed set $C \subset B_\cH$ such that $C_n \xrightarrow{\hat{d}_w} C$ as $n \to \infty$.
\end{lemma}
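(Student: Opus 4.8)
The plan is to follow the argument of Lemma~\ref{lem:nestedincreasingsets_convergence} almost verbatim, adapting only the direction of the nesting. The three ingredients are identical: the compactness of $(\mathscr{C}_w(\cH),\hat{d}_w)$ from Theorem~\ref{th:fundamentalproperties_dwhat}(ii), the triangle inequalities for $d_w$ and $\hat{d}_w$ from Proposition~\ref{prop:dwhat_triangleineq}, and the characterisation that $d_w(D,E)=0$ if and only if $D\subset E$ for weakly closed $D,E\subset B_\cH$ (again Proposition~\ref{prop:dwhat_triangleineq}).

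First I would invoke compactness to extract a subsequence $(C_{n_k})_{k\in\N}$ that converges in $\hat{d}_w$ to some weakly closed set $C\subset B_\cH$. Fixing $\varepsilon>0$, I would then pick $N_\varepsilon\in\N$ so that $\hat{d}_w(C_{n_k},C_{n_{k'}})<\varepsilon$ and $\hat{d}_w(C_{n_k},C)<\varepsilon$ for all $k,k'\geqslant N_\varepsilon$, and set $N_\varepsilon'=n_{N_\varepsilon}$.

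The single place where the decreasing hypothesis enters is the sandwiching step. For any $n\geqslant N_\varepsilon'$ I would choose indices $k,k'\geqslant N_\varepsilon$ with $n_k\leqslant n\leqslant n_{k'}$; since now $C_n\supset C_{n+1}$, monotonicity yields $C_{n_{k'}}\subset C_n\subset C_{n_k}$, which is the \emph{reverse} of the inclusion used in the increasing case. Consequently $d_w(C_n,C_{n_k})=0$ and $d_w(C_{n_{k'}},C_n)=0$, and the triangle inequality gives $d_w(C_n,C)\leqslant d_w(C_n,C_{n_k})+d_w(C_{n_k},C)<\varepsilon$ together with $d_w(C,C_n)\leqslant d_w(C,C_{n_{k'}})+d_w(C_{n_{k'}},C_n)<\varepsilon$. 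Hence $\hat{d}_w(C,C_n)<\varepsilon$ for all $n\geqslant N_\varepsilon'$, and the desired convergence $C_n\xrightarrow{\hat{d}_w}C$ follows.

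I expect no genuine obstacle beyond careful bookkeeping: the only subtlety is ensuring that, after reversing the nesting, the two vanishing distance terms still sit on the correct sides of the two triangle inequalities, so that the larger set $C_{n_k}$ is paired with $d_w(C_n,\cdot)$ and the smaller set $C_{n_{k'}}$ with $d_w(\cdot,C_n)$. This is exactly where the asymmetry of $d_w$ matters, and it is the one point that must be checked rather than merely copied from the increasing case.
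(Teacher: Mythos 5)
Your proof is correct and takes essentially the same approach as the paper: indeed, the paper's proof of this lemma consists solely of the remark that it is identical to Lemma~\ref{lem:nestedincreasingsets_convergence}, and your argument is precisely that adaptation carried out explicitly. You also correctly identified the only point requiring care, namely that after reversing the nesting the sandwich becomes $C_{n_{k'}}\subset C_n\subset C_{n_k}$, so the vanishing terms $d_w(C_n,C_{n_k})=0$ and $d_w(C_{n_{k'}},C_n)=0$ must be placed on the appropriate sides of the two asymmetric triangle inequalities.
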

\begin{proof}
The proof is identical to Lemma~\ref{lem:nestedincreasingsets_convergence}
\end{proof}


\begin{proposition}\label{prop:weakconvergence_orthog}
Let $\cH$ be a separable Hilbert space. Consider $(M_n)_{n \in \N}$ a nested increasing sequence of closed linear subspaces such that
\begin{equation}\label{eq:prop_convorthog}
M_n \xrightarrow{\hat{d}_w} M\,,
\end{equation}
for some $M$ a closed linear subspace of $\cH$. Then $M_n \subset M$ for all $n \in \N$, and $(M_n^\perp)_{n \in \N}$ is a nested decreasing sequence such that
\begin{equation}\label{eq:prop_convorthog2}
M_n^\perp \xrightarrow{\hat{d}_w} M^\perp\,,
\end{equation}
and $M^\perp \subset M_n^\perp$ for all $n \in \N$.
\end{proposition}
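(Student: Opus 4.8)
The plan is to establish the three claims in order: first the nesting $M_n \subset M$, then the reverse nesting $M^\perp \subset M_n^\perp$ together with the monotone decrease of $(M_n^\perp)_n$, and finally the weak-gap convergence \eqref{eq:prop_convorthog2}. For the first claim, I would use the characterisation \eqref{eq:weakgaplimit} of the weak-gap limit from Theorem~\ref{th:fundamentalproperties_dwhat}(iii) (equivalently Proposition~\ref{prop:SHdw_properties}(iii)): since the sequence is nested increasing, any fixed $u \in B_{M_n}$ sits in $B_{M_k}$ for every $k \geqslant n$, so the constant sequence $u_k = u$ (for $k \geqslant n$) weakly converges to $u$, placing $u \in M$. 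Hence $B_{M_n} \subset B_M$, and since these are unit balls of linear subspaces, $M_n \subset M$ for all $n$. Taking orthogonal complements reverses inclusions, giving immediately $M^\perp \subset M_n^\perp$ and $M_{n+1}^\perp \subset M_n^\perp$, so $(M_n^\perp)_n$ is nested decreasing.

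For the convergence \eqref{eq:prop_convorthog2}, I would first invoke Lemma~\ref{lem:nesteddecreasingsets_convergence}: because $(B_{M_n^\perp})_n$ is a nested decreasing sequence of weakly closed sets in $B_\cH$, there exists a weakly closed set $C \subset B_\cH$ with $B_{M_n^\perp} \xrightarrow{\hat{d}_w} C$. The crux is then to identify $C$ with $B_{M^\perp}$. Using the limit characterisation \eqref{eq:weakgaplimit}, $C$ consists of all weak limits $u$ of sequences $(u_n)_n$ with $u_n \in B_{M_n^\perp}$. The inclusion $B_{M^\perp} \subset C$ is easy: any $u \in B_{M^\perp} \subset B_{M_n^\perp}$ for every $n$ (by the reverse nesting just established), so the constant sequence exhibits $u \in C$.

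The main obstacle is the reverse inclusion $C \subset B_{M^\perp}$, i.e.\ showing every such weak limit $u$ is orthogonal to all of $M$. Take $u_n \in M_n^\perp \cap B_\cH$ with $u_n \rightharpoonup u$, and fix any $w \in M$. The difficulty is that $w$ need not lie in any single $M_n$, so I cannot directly use $\langle u_n, w\rangle = 0$. The plan is to approximate: since $M_n \subset M$ with $M_n \xrightarrow{\hat{d}_w} M$, I would argue that $\bigcup_n M_n$ is dense in $M$ (in the norm topology of $\cH$). Indeed, the weak-gap convergence forces every element of $B_M$ to be a weak limit of elements of $B_{M_n}$, and combined with the nesting and convexity this yields norm-density of $\bigcup_n M_n$ in $M$ via the coincidence of weak and strong closures for convex sets noted in the Notation section. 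Granting this, choose $w_k \in M_{n_k}$ with $w_k \to w$ in $\cH$. Then for $n \geqslant n_k$ we have $\langle u_n, w_k\rangle = 0$, and passing to the weak limit in $n$ gives $\langle u, w_k\rangle = 0$; letting $k \to \infty$ and using $w_k \to w$ strongly against the bounded sequence yields $\langle u, w\rangle = 0$. As $w \in M$ was arbitrary, $u \in M^\perp$, and since $u \in B_\cH$ we conclude $u \in B_{M^\perp}$. This establishes $C = B_{M^\perp}$, hence $M_n^\perp \xrightarrow{\hat{d}_w} M^\perp$ by \eqref{eq:dwhat_subspace}, completing the proof.
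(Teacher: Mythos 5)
Your proposal is correct and follows essentially the same route as the paper's proof: both establish $M_n \subset M$ via constant sequences and the limit characterisation of Theorem~\ref{th:fundamentalproperties_dwhat}(iii), invoke Lemma~\ref{lem:nesteddecreasingsets_convergence} to obtain a weakly closed limit $C$ of the balls $B_{M_n^\perp}$, and then identify $C = B_{M^\perp}$ by upgrading the weak approximation of elements of $B_M$ to strong approximation inside $\bigcup_n M_n$ using convexity (Mazur). The only difference is organisational: the paper builds a diagonal sequence of convex combinations $w'_{n_k} \in B_{M_{n_k}}$ converging strongly to $w$ and pairs it with $v_{n_k} \rightharpoonup v$ in a single limit, whereas you isolate the norm-density of $\bigcup_n M_n$ in $M$ as an intermediate claim and take two successive limits ($n \to \infty$ against a fixed $w_k$, then $k \to \infty$ against the fixed $u$) --- the same underlying fact that weak and strong closures of convex sets coincide drives both versions.
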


\begin{proof}
Consider $(M_n)_{n \in \N}$ and $M$ satisfying the hypotheses of the proposition. We know from the convergence of the $M_n$'s to $M$ (Theorem~\ref{th:fundamentalproperties_dwhat}(iii)) that
\[B_M = \{u \in B_\cH \,|\, u_n \rightharpoonup u \text{ for a sequence } (u_n)_{n \in \N} \text{ with } u_n \in B_{M_n} \, \forall \, n \in \N\}\,.\]
The nested increasing nature of the $M_n$'s means that $M_n \subset M_{n+1}$ for all $n \in \N$. Given any $n_0 \in \N$ and any $u \in B_{M_{n_0}}$, for all $n \geqslant n_0$ let $u_n = u$, and for all $n < n_0$ let $u_n = 0$. Then $(u_n)_{n \in \N}$ is a sequence such that $u_n \in B_{M_n}$ for all $n \in \N$ and $u_n \rightharpoonup u \in B_\cH$ as $n \to \infty$. Therefore, $u \in B_M$, so $B_{M_{n_0}} \subset B_M$. As $n_0$ was arbitrarily chosen, this implies $B_{M_n} \subset B_M$ for all $n \in \N$, so that indeed we have the inclusion
\[M_n \subset M\,, \quad \forall \, n \in \N\,.\]

From the nested increasing nature of the $M_n$'s it follows that
\[M_n^\perp \supset M_{n+1}^\perp\,, \quad \forall \, n \in \N\,,\]
and thus $B_{M_n^\perp} \supset B_{M_{n+1}^\perp}$ for all $n \in \N$. By Lemma~\ref{lem:nesteddecreasingsets_convergence} there exists a weakly closed set $C \subset B_\cH$ such that $\hat{d}_w(B_{M_n^\perp},C) \to 0$ as $n \to \infty$.

We claim that $C = B_{M^\perp}$. To this end, let $v \in C$ so that there exists a sequence $(v_n)_{n \in \N}$ such that $v_n \rightharpoonup v$ and $v_n \in B_{M_n^\perp}$ for all $n \in \N$ (Theorem~\ref{th:fundamentalproperties_dwhat}(iii)). Let $w \in B_M$ so that there exists a sequence $(w_n)_{n \in \N}$ such that $w_n \rightharpoonup w$ and $w_n \in B_{M_n}$ for all $n \in \N$. 

We shall construct a subsequence of weakly closed sets $(B_{M_{n_k}})_{k \in \N}$ along with a sequence $(w'_{n_k})_{k \in \N}$ that has the property $w'_{n_k} \in B_{M_{n_k}}$ for all $k \in \N$ and $w'_{n_k} \to w$ strongly. Indeed, as $w_n \rightharpoonup w$, by Mazur's Theorem \cite[Corollary~3.8]{Brezis-FA-Sob-PDE} there exists a sequence of vectors $(z_k)_{k \in \N}$ that are made up of convex combinations of the $w_n$'s, such that $z_k \to w$ strongly. For each $k \in \N$, $\Vert z_k\Vert_\cH \leqslant 1$ as each $z_k$ is a convex combination of the $w_n$'s and thus remains within the unit ball. As each $z_k$ is a convex combination of the $w_n$'s, it has the form
\[z_k = \sum_{n = 1}^{N_k} t_n^{(k)} w_n\,,\]
where $t_n^{(k)} \geqslant 0$, $N_k \in \N$, and $\sum_{n = 1}^{N_k} t_n^{(k)} = 1$ for all $k \in \N$. Therefore, such a combination exists in the set $B_{M_{N_k}}$ given the nested structure of the Krylov subspaces and the convexity of $B_{M_n}$ for all $n \in \N$. The indexing of the subsequence $w'_{n_k}$ is set as follows: let $k=1$ and choose some number $n_1 \in \N$ such that $z_1 \in B_{M_{n_1}}$. We then set $w'_{n_1} = z_1 \in B_{M_{n_1}}$. We do the same process with $k = 2$, choosing $n_2 > n_1$ which is always possible due to the nested structure of the sequence $(B_{M_n})_{n \in \N}$. Continuing this process, we obtain a subsequence of balls $(B_{M_{n_k}})_{k \in \N}$ with a sequence $(w'_{n_k})_{k \in \N}$ such that $w'_{n_k} \in B_{M_{n_k}}$ for each $k \in \N$, and such that $w'_{n_k} \xrightarrow{k \to \infty} w$ strongly.

Given that $v_{n_k} \rightharpoonup v$, and $v_{n_k} \in B_{M_{n_k}^\perp}$ for all $k \in \N$, we see that
\[0 = \langle v_{n_k}, w'_{n_k}\rangle \xrightarrow{k\to\infty} \langle v, w\rangle\,,\]
implying that $v \perp w$. As the nature of $w$ is arbitrary in $B_M$, this proves that $v \in M^\perp$, and therefore $C \subset B_{M^\perp}$.

It is clear from the fact that $M_n \subset M$ that we have the inclusion $M_n^\perp \supset M^\perp$ for all $n \in \N$. This implies that $C \supset B_{M^\perp}$, and therefore $C = B_{M^\perp}$. We have the convergence
\[M_n^\perp \xrightarrow[n \to \infty]{\hat{d}_w} M^\perp\,,\]
thus completing the proof.
\end{proof}

\begin{remark}\label{rem:nesting_structure}
The nested structure $M_n \subset M_{n+1}$ turned out to be critical for proving the above result, in particular for the use of Mazur's Theorem.
\end{remark}

The next proposition relates convergence of subspaces in the weak-gap metric to the convergence of the orthogonal projections in the strong operator topology.

\begin{proposition}\label{prop:convergenceofprojections}
Consider $\cH$ a separable Hilbert space and let $M$ be a closed linear subspace with $(M_n)_{n \in \N}$ a sequence of closed linear subspaces that satisfy the following conditions:
	\begin{itemize}
		\item[(i)] $M_n \subset M_{n+1}$ for all $n \in \N$,
		\item[(ii)] $M_n \xrightarrow{\hat{d}_w} M$.
	\end{itemize}
Then $P_{M_n} \to P_{M}$ in the strong operator topology, where $P_{M_n}$ is the orthogonal projection onto $M_n$, and $P_{M}$ is the orthogonal projection onto $M$.
\end{proposition}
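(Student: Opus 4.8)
The plan is to reduce the weak-gap convergence to the classical monotone convergence of orthogonal projections, the crucial intermediate step being the identification of $M$ with the closure of the increasing union $\bigcup_{n \in \N} M_n$.

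First I would invoke Proposition~\ref{prop:weakconvergence_orthog}: since $(M_n)_{n \in \N}$ is nested increasing and converges to $M$ in the weak-gap metric, it gives $M_n \subset M$ for every $n \in \N$ and, dually, $M^\perp \subset M_n^\perp$. Setting $M_\infty := \overline{\bigcup_{n \in \N} M_n}$, the inclusions $M_n \subset M$ together with the closedness of $M$ immediately yield $M_\infty \subset M$.

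The heart of the argument is the reverse inclusion $M \subset M_\infty$. Here I would use the characterisation of the weak-gap limit in Theorem~\ref{th:fundamentalproperties_dwhat}(iii): for any $u \in B_M$ there is a sequence $(u_n)_{n \in \N}$ with $u_n \in B_{M_n}$ and $u_n \rightharpoonup u$. Each $u_n$ lies in $M_n \subset M_\infty$, and since $M_\infty$ is a closed linear subspace it is convex, hence weakly closed; therefore the weak limit $u$ belongs to $M_\infty$. Thus $B_M \subset M_\infty$, and by homogeneity $M \subset M_\infty$, so that $M = M_\infty = \overline{\bigcup_{n \in \N} M_n}$. With this identification in hand, the conclusion follows from the standard monotone-projection fact. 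For $x \in M^\perp \subset M_n^\perp$ one has $P_{M_n} x = 0 = P_M x$, so it suffices to treat $x \in M$: given $\varepsilon > 0$, the density of $\bigcup_n M_n$ in $M$ furnishes $N$ and $y \in M_N$ with $\Vert x - y\Vert_\cH < \varepsilon$, and for $n \geqslant N$ we have $y \in M_n$, whence $P_{M_n} y = y$ and $\Vert x - P_{M_n} x\Vert_\cH \leqslant \Vert x - y\Vert_\cH + \Vert P_{M_n}(y - x)\Vert_\cH \leqslant 2\Vert x - y\Vert_\cH < 2\varepsilon$. Decomposing an arbitrary $x = P_M x + (x - P_M x)$ and applying this to the component $P_M x \in M$ then yields $P_{M_n} x \to P_M x$ for every $x \in \cH$.

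I expect the main obstacle to be precisely the passage from the weak-topological weak-gap convergence to the strong (norm) convergence of the projections: the weak-gap limit only supplies \emph{weak} limits of unit-ball sequences, so the decisive move is recognising that the closed subspace $M_\infty$ is weakly closed, which upgrades the weak containment into the genuine set equality $M = \overline{\bigcup_n M_n}$. Once this bridge is in place, the remainder is the routine monotone-projection estimate above.
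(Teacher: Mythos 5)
Your proof is correct, and it takes a genuinely different and arguably more economical route than the paper's. The paper's proof works directly with the projections: it establishes the commutativity relations $P_{M_n}P_{M_m}=P_{M_m}P_{M_n}$ and $P_{M_n}P_M=P_MP_{M_n}$ from the nesting, decomposes $x = P_{M_n}x + P_{M}(\1 - P_{M_n})x + (\1 - P_{M})x$, extracts a weakly convergent subsequence of $P_{M}(\1 - P_{M_n})x$ by weak compactness of $B_\cH$, identifies its limit as $0$ by showing it lies in both $M$ and $M^\perp$ (this is where the full strength of Proposition~\ref{prop:weakconvergence_orthog}, namely $M_n^\perp \xrightarrow{\hat{d}_w} M^\perp$, is used), upgrades the weak convergence to norm convergence via the projection identities, and finally promotes subsequential convergence to convergence of the whole sequence by a Cauchy argument. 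You instead prove the structural identity $M = \overline{\bigcup_{n}M_n}$: the inclusion $\supset$ is immediate from $M_n \subset M$, and the inclusion $\subset$ follows from the weak-gap characterisation of the limit (Theorem~\ref{th:fundamentalproperties_dwhat}(iii), or equivalently Proposition~\ref{prop:SHdw_properties}(iii)) together with the observation that the closed subspace $\overline{\bigcup_n M_n}$ is convex and hence weakly closed -- the same convexity fact the paper records in its notation section. Once that identity is in hand, the conclusion is the classical monotone-projection convergence theorem, which you reprove in three lines. Your route avoids weak compactness, subsequence extraction, and the Cauchy argument entirely, and needs only the inclusion part of Proposition~\ref{prop:weakconvergence_orthog} (indeed $M^\perp \subset M_n^\perp$ is just orthocomplementation of $M_n \subset M$), not the convergence of the orthogonal complements. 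What the paper's longer argument buys is precisely the exercise of that complement-convergence machinery, which it reuses elsewhere (Theorem~\ref{th:Kreduced}); what yours buys is brevity and the explicit, reusable structural fact that nested weak-gap convergence forces $M$ to be the closure of the union.
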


\begin{proof}
We note that under the hypotheses of the proposition $M_n^\perp \xrightarrow{\hat{d}_w} M^\perp$ as $n \to \infty$, and also $M_n^\perp \supset M_{n+1}^\perp \supset M^\perp$, $M_n \subset M$ for all $n \in \N$ by Proposition~\ref{prop:weakconvergence_orthog}. 

First we shall establish the claim that for any $x \in \cH$, $P_{M_n}P_{M_m}x = P_{M_m}P_{M_n}x$ and $P_{M_n}P_{M}x = P_{M}P_{M_n}x$ for all $n,m \in \N$. Indeed, without loss of generality, we assume that $m > n$, so that $P_{M_n}x \in M_m$ due to condition (i) and therefore $P_{M_m}P_{M_n}x = P_{M_n}x$. Also, $x = P_{M_m}x + (\1 - P_{M_m})x$ and $(\1-P_{M_m})x \in M_m^\perp \subset M_n^\perp$. Therefore, $P_{M_n}x = P_{M_n}P_{M_m}x + P_{M_n}(\1 - P_{M_m})x = P_{M_n}P_{M_m}x$, from which $P_{M_m}P_{M_n}x = P_{M_n}x = P_{M_n}P_{M_m}x$. Similarly, $P_{M}P_{M_n}x = P_{M_n}x$ as $P_{M_n}x \in M$; and as $(\1 - P_{M})x \in M^\perp \subset M_n^\perp$ we have $P_{M_n}x = P_{M_n}P_{M}x$. Therefore, $P_{M_n}P_{M}x = P_{M}P_{M_n}x$ as claimed.

Now we let $x \in B_\cH$. We may decompose $x$ as
\[x = P_{M_n}x + P_{M}(\1 - P_{M_n})x + (\1 - P_{M})x\,, \tag{\dag}\]
where $P_{M_n}x$ is the component in $M_n$, $P_{M}(\1 - P_{M_n})x$ is the component in $M$ that is orthogonal to $M_n$, and $(\1 - P_{M})x$ is the component orthogonal to $M$. Each component is also contained in $B_\cH$.

From the compactness of $B_\cH$ in the weak topology \cite[Theorem~3.17]{Brezis-FA-Sob-PDE}, there exists a subsequence of $(P_{M}(\1 - P_{M_n})x)_{n \in \N}$ that weakly converges to some $v \in B_\cH$, i.e.,
\[P_{M}(\1 - P_{M_{n_k}})x \rightharpoonup v\,.\]
It is clear that $v \in M$ as the sequence $(P_M(\1-P_{M_{n_k}})x)_{k \in \N} \subset M$ and $M$ is weakly closed. Yet, owing to the commutativity of $P_M$ and $P_{M_{n_k}}$, we also know that $P_{M}(\1 - P_{M_{n_k}})x \in M_{n_k}^\perp$ for all $k \in \N$. Therefore, $v \in M^\perp$ from the convergence $M_{n_k}^\perp \xrightarrow{\hat{d}_w} M^\perp$ and Proposition~\ref{prop:SHdw_properties}(iii), as well as the fact that $M^\perp$ is weakly closed. Thus $v = 0$. From the weak convergence $P_M(\1 - P_{M_{n_k}})x \rightharpoonup 0$ we deduce that
\begin{align*}
\Vert P_{M}(\1 - P_{M_{n_k}})x \Vert_\cH^2 & = \bigl \langle P_{M}(\1 - P_{M_{n_k}})x, P_{M}(\1 - P_{M_{n_k}})x \bigr \rangle \\
 & = \bigl \langle P_{M}(\1 - P_{M_{n_k}})x, (\1 - P_{M_{n_k}})x \bigr \rangle\\
 & = \bigl \langle (\1 - P_{M_{n_k}})P_{M}(\1 - P_{M_{n_k}})x, x \bigr \rangle\\
 & = \bigl \langle P_{M}(\1 - P_{M_{n_k}})x, x \bigr \rangle \xrightarrow{k \to \infty} 0\,,
\end{align*}
owing to the commutativity and the fact that $P_{M}$ and $P_{M_{n_k}}$ are orthogonal projections. Therefore,
\[P_{M}(\1 - P_{M_{n_k}})x \xrightarrow[k \to \infty]{\Vert \cdot\Vert_\cH} 0\,,\]
for any $x \in B_\cH$. Using the decomposition $(\dag)$
\[P_{M_{n_k}}x \xrightarrow[k\to\infty]{\Vert\cdot\Vert_\cH} P_M x\,.\]
This result then trivially extends to \emph{any} $x \in \cH$.

We now show that the full sequence $(P_{M_n})_{n \in \N}$ converges to $P_M$ in the strong operator topology. Indeed, let $x \in \cH$ and let $\varepsilon > 0$. There exists some $N_{\varepsilon,x} \in \N$ such that for all $k,k' \geqslant N_{\varepsilon,x}$,
\[\Vert P_{M_{n_k}}x - P_{M_{n_{k'}}}x \Vert_\cH < \varepsilon\,.\]
Choose $n,m \in \N$ such that $n \geqslant n_{N_{\varepsilon,x} + 1}$ and without loss of generality, let $m > n$. We choose some $n_k, n_{k'}$ such that $k,k' \geqslant N_{\varepsilon,x}$ and $n_k < n < m < n_{k'}$. Then,
\[P_{M_m}x - P_{M_n}x = (P_{M_{n_{k'}}}x - P_{M_{n_k}}x) - (P_{M_{n_{k'}}}x - P_{M_m}x) + (P_{M_{n_{k}}}x - P_{M_{n}}x)\,,\]
and
\[P_{M_{n_{k'}}}x - P_{M_{m}}x = P_{M_{n_{k'}}}x - P_{M_{n_{k}}}x + P_{M_{n_{k}}}x - P_{M_{m}}x\,,\]
and from the commutativity of the projections and inclusions $M_m \subset M_{n_{k'}}$, $M_m \supset M_{n_k}$, we have
\begin{align*}
P_{M_{n_{k}}}x - P_{M_{m}}x & = P_{M_{n_{k}}}x - P_{M_{n_{k'}}}P_{M_{m}}x \\
 & = P_{M_{n_{k}}}P_{M_m}x - P_{M_{n_{k'}}}P_{M_{m}}x \\ 
 & = (P_{M_{n_{k}}} - P_{M_{n_{k'}}})P_{M_{m}}x = P_{M_m}(P_{M_{n_{k}}} - P_{M_{n_{k'}}})x\,.
\end{align*}
This implies
\[\Vert P_{M_{n_{k}}}x - P_{M_{m}}x\Vert_\cH = \Vert P_{M_m}(P_{M_{n_{k}}} - P_{M_{n_{k'}}})x\Vert_\cH \leqslant \Vert (P_{M_{n_{k}}} - P_{M_{n_{k'}}})x\Vert_\cH < \varepsilon\,,\]
and therefore
\[\Vert P_{M_{n_{k'}}}x - P_{M_{m}}x\Vert_\cH < 2\varepsilon\,.\]

Using the fact that $P_{M_{n_k}}x = P_{M_{n_k}} P_{M_n}x$ and $P_{M_{n_{k'}}}P_{M_{n}}x = P_{M_{n}}x$ owing to the inclusions $M_n \supset M_{n_k}$ and $M_n \subset M_{n_{k'}}$ respectively, we see that
\begin{align*}
P_{M_{n_{k}}}x - P_{M_{n}}x & = P_{M_{n_{k}}}x - P_{M_{n_{k'}}}P_{M_n}x \\
 & = P_{M_{n_{k}}}P_{M_n}x - P_{M_{n_{k'}}}P_{M_n}x = P_{M_n}(P_{M_{n_{k}}} - P_{M_{n_{k'}}}) x\,.
\end{align*} 
This implies
\[\Vert P_{M_{n_{k}}}x - P_{M_{n}}x\Vert_\cH \leqslant \Vert P_{M_{n_{k}}}x - P_{M_{n_{k'}}}x\Vert_\cH < \varepsilon\,.\]

Putting all these inequalities together
\[\Vert P_{M_n}x - P_{M_m}x \Vert_\cH < 4 \varepsilon\]
which shows us that $(P_{M_{n}}x)_{n \in \N}$ is a Cauchy sequence. Therefore this sequence has a limit in $\cH$ that must be the same as the limit of the convergent subsequence, i.e., $P_M x$. Therefore, we have that for all $x \in \cH$
\[P_{M_n}x \xrightarrow[n \to \infty]{\Vert\cdot\Vert_\cH} P_Mx\,.\]
\end{proof}

\begin{remark}\label{rem:Katogap_Projectionconvergence}
We note that when subspaces converge in the sense of the Kato gap metric (see \cite[Chapter~IV, Section~2]{Kato-perturbation}) in a Hilbert space, this corresponds to the \emph{operator norm} convergence of the projections. In comparison, in Proposition~\ref{prop:convergenceofprojections} we manage to achieve the strong operator topology convergence by having convergence in the weak-gap metric.
\end{remark}

\section{Structural Properties of Self-Adjoint Operators Revisited}\label{sec:selfadjoint_struct}
We already know from previous analysis \cite{CM-2019_ubddKrylov, CM-Nemi-unbdd-2019} that self-adjoint inverse linear problems, under suitable growth conditions on the datum vector $g$, exhibit Krylov-solvability. We aim to show this again in light of the Krylov intersection, thereby unmasking the interplay between structural properties of $\K{A}{g}$ and Krylov solvability that is absent in the analysis \cite{CM-Nemi-unbdd-2019}, and only partially developed in \cite{CM-2019_ubddKrylov}.

It turns out that the structural properties of Krylov subspaces and spectral properties of the operator $A$ in the self-adjoint setting are very strongly interlinked with the moment problem in one dimension. The moment problem, barring more abstract generalisations, seeks to establish a representation (where one exists) of a linear functional acting on polynomials over $\R$ with integration using a positive Radon measure on $\R$. 

This investigation of the link between structural properties of Krylov subspaces and the moment problem in one dimension is a natural follow-up of the study \cite{CM-Nemi-unbdd-2019}. In \cite{CM-Nemi-unbdd-2019} connections are made between the convergence of the approximants to a solution to \eqref{eq:lininv} from the conjugate gradients algorithm and certain properties of determinacy of the moment problem along with properties of orthogonal polynomials on the real line. A very thorough and modern overview of moment problems and orthogonal polynomials is provided in the monograph \cite{schmu_moment}.

An important structural property of Krylov subspaces generated by a self-adjoint operator is the existence of an isomorphism between $\Kc{A}{g}$ and the measure space $L^2\big(\R,\mu_g^{(A)}\big)$, where $\mu_g^{(A)}(\Omega) := \langle E^{(A)}(\Omega)g, g\rangle$ and $E^{(A)}$ is the unique projection valued spectral measure for the operator $A$ and $\Omega \subset \R$ a Borel set (see Remark~\ref{rem:KisomorphismL2}). The presence of this isomorphism is used in Proposition~\ref{prop:Kcore_Hamburgerderteminate} to prove the Krylov core condition.


\subsection{The Hamburger Moment Problem}\label{subsec:selfadjoint_Hamburger_moment}~

We first discuss important results that stem from the Hamburger moment problem in one dimension which are used heavily in our analysis. The Hamburger moment problem is by now a classical area of mathematics and has been very well studied \cite{schmu_moment,Chihara-book-1978,Koornwinder_OrthPoly,Szego-OrthPolyBook,Shohat-Tamarkin_ProblemOfMoments1943}. We begin by discussing some generalities of the moment problem in an abstract setting before focussing our attention specifically on the Hamburger moment problem.

For the general one dimensional moment problem on $\R$ we consider a linear functional $\mathscr{L}$ on a vector space of continuous functions $\mathcal{F}$ over $\R$ and ask: under what conditions does there exist a \emph{positive} Radon measure $\mu$ on $\R$ such that $\mathscr{L}(f) = \int_\R f(x) \, \rd \mu(x)$ for all $f \in \mathcal{F}$? In the case where there does exist such a Radon measure, we say that $\mathscr{L}$ is a \emph{moment functional}.

Typically the moment problem is formulated for the case where $\mathcal{F}$ is the space of all real polynomials on $\R$, and a real-valued linear functional $\mathscr{L}$ either: (i) is defined by a given sequence of real numbers $s \equiv (s_n)_{n \in \N_0}$ by $\mathscr{L}(x^n) := s_n$ for all $n \in \N_0$, or (ii) it defines a sequence of real numbers $s \equiv (s_n)_{n \in \N_0}$ by $s_n := \mathscr{L}(x^n)$ for all $n \in \N_0$. We note immediately the subtle restriction that in this formulation $\mathscr{L}$ is real-valued on the space of real polynomials on $\R$. This removes a certain redundancy in the moment problem: indeed should a linear functional $\widetilde{\mathscr{L}}$ be such that $\widetilde{\mathscr{L}}(p)$ has non-trivial imaginary parts for some real polynomial $p$ on $\R$, it is clear that there cannot exist a positive Radon measure $\mu$ such that $\widetilde{\mathscr{L}}(p) = \int_\R p(x) \, \rd \mu(x)$.

For the purposes of the rest of this article we work with the latter description of the moment problem formulation. That is, given a real-valued linear functional $\mathscr{L}$ defined on the space of all real polynomials on $\R$ we define the sequence of real numbers $s \equiv (s_n)_{n \in \N_0}$ as 
\begin{equation}\label{eq:sn_definition}
s_n := \mathscr{L}(x^n)\,,\quad  \forall \, n \in \N_0\,.
\end{equation}

Therefore, we can restate the classical one dimensional moment problem in these more specific circumstances as follows: given a real-valued linear functional $\mathscr{L}$ on the space of real polynomials on $\R$ (that additionally defines the sequence of real numbers $s$ in \eqref{eq:sn_definition}), under what conditions does there exist a positive Radon measure $\mu$ on $\R$ such that 
\begin{equation}\label{eq:moment_equation}
\mathscr{L}(x^n) = \int_\R x^n \, \rd \mu \text{ for all } n \in \N_0\,?
\end{equation}
In the case of the existence of such a measure, we say that $\mathscr{L}$ is a \emph{Hamburger moment functional} (or simply \emph{moment functional}), and also we say that the sequence $s$ that it defines by \eqref{eq:sn_definition} is a \emph{Hamburger moment sequence}. Moreover we wish to know, in the case of existence of such a measure, whether it is \emph{unique}--also known as the \emph{determinacy} of the moment problem. Before delving into this argument we state some necessary definitions and existence results for the Hamburger moment problem.

\begin{definition}\label{def:positive_def}
Let $(s_n)_{n \in \N_0}$ be a sequence of real numbers. We say the the sequence is positive semidefinite if for all $(\xi_0,\xi_1,\xi_2,\dots,\xi_n) \in \R^{n+1}$ for $n \in \N_0$ we have
\begin{equation}\label{eq:def_possemidef}
\sum_{i,j = 0}^n s_{i+j}\xi_i\xi_j \geqslant 0\,,
\end{equation}
and we say that the sequence is positive definite if
\begin{equation}\label{eq:def_posdef}
\sum_{i,j = 0}^n s_{i+j}\xi_i\xi_j > 0\,,
\end{equation}
for all $(\xi_0,\xi_1,\xi_2,\dots,\xi_n) \in \R^{n+1}\setminus\{0\}$.
\end{definition}

\begin{theorem}[Hamburger's Theorem, Theorem~3.8 \cite{schmu_moment}]\label{th:Hamburger_solutions}
Let $\mathscr{L}$ be a real-valued linear functional defined on the space of real polynomials on $\R$, and let the sequence $s \equiv (s_n)_{n \in \N_0}$ of real numbers be defined by \eqref{eq:sn_definition}. The following are equivalent:
\begin{itemize}
	\item[(i)] $\mathscr{L}$ is a Hamburger moment functional, i.e., the sequence $s$ is a Hamburger moment sequence, so there exists a positive Radon measure $\mu$ on $\R$ such that $x^n \in L^1(\R,\mu)$ and
		\[\mathscr{L}(x^n) = \int_\R x^n \, \rd \mu\,, \quad \forall \, n \in \N_0\,,\]
	\item[(ii)] the sequence $s$ is positive semidefinite,
	\item[(iii)] the linear functional $\mathscr{L}$ is a positive linear functional on the space of real polynomials on $\R$, i.e., $\mathscr{L}(p^2) \geqslant 0$ for all real polynomials $p$ on $\R$. 
\end{itemize}

\end{theorem}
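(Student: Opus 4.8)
The plan is to prove Hamburger's Theorem by establishing the cycle of implications (i) $\Rightarrow$ (iii) $\Rightarrow$ (ii) $\Rightarrow$ (i), since the two easy arrows are essentially formal while the genuine content lies in the final implication (ii) $\Rightarrow$ (i), which is the existence statement. First I would show (i) $\Rightarrow$ (iii): if $\mathscr{L}(x^n) = \int_\R x^n \, \rd\mu$ for a positive Radon measure $\mu$, then for any real polynomial $p$ we have by linearity $\mathscr{L}(p^2) = \int_\R p(x)^2 \, \rd\mu(x) \geqslant 0$, using positivity of $\mu$ and nonnegativity of $p^2$. Next, (iii) $\Rightarrow$ (ii) is purely algebraic: given $(\xi_0,\dots,\xi_n) \in \R^{n+1}$, set $p(x) = \sum_{i=0}^n \xi_i x^i$. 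Then $p^2$ expands as $\sum_{i,j=0}^n \xi_i \xi_j x^{i+j}$, and applying $\mathscr{L}$ term by term with \eqref{eq:sn_definition} yields $\mathscr{L}(p^2) = \sum_{i,j=0}^n s_{i+j} \xi_i \xi_j$; positivity of the functional then gives precisely \eqref{eq:def_possemidef}.

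The substantial step is (ii) $\Rightarrow$ (i). The approach I would take is the classical Hilbert-space construction via the spectral theorem. Starting from the positive semidefinite sequence $s$, I would define a sesquilinear form on the space $\Poly$ of real polynomials by $\langle p, q\rangle := \mathscr{L}(pq)$, which by positive semidefiniteness is a positive semidefinite inner product. Quotienting by the null space $\{p : \mathscr{L}(p^2)=0\}$ and completing yields a Hilbert space $\mathcal{H}_s$. The key object is the multiplication-by-$x$ operator, $X: [p] \mapsto [xp]$, which is densely defined and symmetric on $\mathcal{H}_s$ because $\langle Xp, q\rangle = \mathscr{L}(xpq) = \langle p, Xq\rangle$. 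The heart of the argument is then to pass to a self-adjoint extension $\widetilde{X}$ of this symmetric operator — this always exists because $X$ commutes with a natural conjugation and hence has equal deficiency indices (von Neumann's theorem), or alternatively one invokes the well-known fact that any real symmetric operator admits self-adjoint extensions. Applying the spectral theorem to $\widetilde{X}$ produces a projection-valued measure $E$, and setting $\mu(\Omega) := \langle E(\Omega)\mathbf{1}, \mathbf{1}\rangle$ (where $\mathbf{1}$ is the class of the constant polynomial $1$) gives the desired positive Radon measure, since $\int_\R x^n \, \rd\mu = \langle \widetilde{X}^n \mathbf{1}, \mathbf{1}\rangle = \langle X^n \mathbf{1},\mathbf{1}\rangle = \mathscr{L}(x^n) = s_n$.

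The main obstacle is the passage from the symmetric operator $X$ to a genuinely self-adjoint extension, together with the delicate point that the constant polynomial $\mathbf{1}$ must lie in the domain of all powers $\widetilde{X}^n$ so that the moment computation $\langle \widetilde{X}^n\mathbf{1},\mathbf{1}\rangle = s_n$ is justified. Care is needed because $X$ need not be essentially self-adjoint (this is exactly the indeterminate case), so one must select \emph{some} self-adjoint extension and verify that the polynomial domain is suitably invariant under the resolvents or bounded functional calculus to legitimise the moment identities. Since the excerpt cites this as \cite[Theorem~3.8]{schmu_moment}, I would in practice defer the full operator-theoretic construction to that reference and present only the elementary equivalences (i) $\Leftrightarrow$ (iii) $\Leftrightarrow$ (ii) in detail, remarking that the existence direction (ii) $\Rightarrow$ (i) is the substantive classical result proved there.
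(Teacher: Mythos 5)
The paper does not prove this theorem at all: it is a background result quoted verbatim from \cite[Theorem~3.8]{schmu_moment} in the review Section~\ref{subsec:selfadjoint_Hamburger_moment}, so there is no internal argument to compare yours against. Judged on its own merits, your proposal is correct. The implications (i) $\Rightarrow$ (iii) $\Rightarrow$ (ii) are exactly as you compute, and your sketch of (ii) $\Rightarrow$ (i) --- the semi-inner product $\langle p,q\rangle := \mathscr{L}(pq)$ on polynomials, quotient and completion, the symmetric multiplication operator $X$, a self-adjoint extension $\widetilde{X}$ via von Neumann's conjugation criterion, and the measure $\mu(\cdot) := \langle E(\cdot)\mathbf{1},\mathbf{1}\rangle$ --- is the classical operator-theoretic proof. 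This is, in fact, precisely the framework (the space $\cH_s$ and the operator $T_x$) that the paper itself sets up immediately after the theorem in order to state the determinacy criterion of Theorem~\ref{th:Hamburger_determinancy}; the cited monograph proves the existence statement of Theorem~3.8 by a different classical route, namely Haviland's theorem combined with the fact that a polynomial nonnegative on $\R$ is a sum of two squares. Both proofs are standard, and your decision to defer the existence direction to the reference matches the paper's own treatment of the result.

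Three points would tighten your sketch. First, for $X$ to be well defined on the quotient you must check that multiplication by $x$ maps the null space $\mathcal{N} := \{p \,:\, \mathscr{L}(p^2)=0\}$ into itself; this follows from the Cauchy--Schwarz inequality, which is valid for semi-inner products: $\mathscr{L}\bigl((xp)^2\bigr) = \langle x^2 p, p\rangle \leqslant \mathscr{L}\bigl((x^2p)^2\bigr)^{1/2}\,\mathscr{L}(p^2)^{1/2} = 0$ whenever $p \in \mathcal{N}$. Second, von Neumann's theorem and the spectral theorem are statements about complex Hilbert spaces, so one should complexify first, exactly as the paper does when it extends $\mathscr{L}$ to complex polynomials to define $\langle\cdot,\cdot\rangle_s$. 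Third, the domain issue you flag as the main obstacle is in fact harmless: since $\widetilde{X} \supset X$, one has $\widetilde{X}^n \supset X^n$ for every $n$, and the polynomial classes are invariant under $X$, so $[\mathbf{1}] \in \mathcal{D}\bigl(\widetilde{X}^n\bigr)$ automatically and $\langle \widetilde{X}^n\mathbf{1},\mathbf{1}\rangle = \langle X^n\mathbf{1},\mathbf{1}\rangle = \mathscr{L}(x^n) = s_n$; no invariance under resolvents or the bounded functional calculus is required. Finally, note that $\mu$ is finite with $\mu(\R) = s_0$, and $[\mathbf{1}] \in \mathcal{D}\bigl(\widetilde{X}^n\bigr)$ gives $x^n \in L^2(\R,\mu)$, hence $x^n \in L^1(\R,\mu)$ as demanded in (i).
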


\begin{remark}\label{rem:determinacy_HMP_selfadjoint}
If we consider any $g \in C^\infty(A)$ for a self-adjoint operator $A$ on Hilbert space $\cH$, we see that the positive (and bounded) Radon measure $\mu_g^{(A)}(\Omega) := \langle E^{(A)}(\Omega)g, g\rangle$ on $\R$ generates a linear functional $\mathscr{L}_\mu$ on the space of all polynomials over $\R$, $\mathscr{L}_\mu(p) := \int_\R p(\lambda) \, \rd \mu_g^{(A)}(\lambda)$ for some polynomial $p$. This linear functional is clearly real-valued for the real polynomials on $\R$.

On the subspace of all real polynomials on $\R$, the functional $\mathscr{L}_\mu$ is a positive linear functional, i.e., for any real polynomial $p$ we have $\mathscr{L}_\mu(p^2) \geqslant 0$. By Theorem~\ref{th:Hamburger_solutions} the sequence of real numbers given by $s_n := \mathscr{L}_\mu(\lambda^n) = \int_\R \lambda^n \, \rd \mu_g^{(A)}(\lambda)$ is positive semidefinite.
\end{remark}

We now move on to the conditions for the \emph{determinacy} of the Hamburger moment problem. Without loss of generality, we suppose that the linear functional $\mathscr{L}$ on the space of real polynomials on $\R$ is real-valued and positive. Therefore, from Theorem~\ref{th:Hamburger_solutions}(i), $\mathscr{L}$ is a Hamburger moment functional and thus there exists a positive Radon measure $\mu$ on $\R$ such that
\[\mathscr{L}(x^n) = \int_{\R} x^n \, \rd \mu \,, \quad \forall \, n \in \N_0\,,\]
and we define the sequence of real numbers $s \equiv (s_n)_{n \in \N_0}$ as in \eqref{eq:sn_definition}. It is clear that $\mathscr{L}$ extends to the space of \emph{all} polynomials on $\R$ by linearity, i.e., for a (not necessarily real) polynomial $p$ on $\R$, $p(x) = \sum_{i = 0}^n c_i x^n$ for $c_i \in \C$ for all $i=0,1,\dots,n$, we may naturally extend $\mathscr{L}$ as follows,
\[\mathscr{L}(p) := \sum_{i=0}^n c_i \mathscr{L}(x^n)=\int_\R p(x) \,\rd \mu\,.\]
(We do not distinguish between the original moment functional and the extended moment functional to avoid unnecessary obfuscation.) Then for the space of all polynomials on $\R$ we have that $\mathscr{L}$ defines a scalar product, anti-linear in the first entry, in the following way.
\begin{equation}
\langle p,q\rangle_s := \mathscr{L}(\bar{p}q)\,,
\end{equation} 
for any $p,q$ polynomials on $\R$. We see that given any non-zero polynomial $p$ on $\R$ of degree $n \in \N_0$
\[\langle p,p\rangle_s = \mathscr{L}(\vert p \vert^2) = \int_\R \overline{p(x)}p(x) \, \rd \mu = \sum_{i,j=0}^n s_{i+j} \bar{c_i}c_j\,,\]
where $p(x) = \sum_{i = 0}^n c_i x^i$ and $c_i \in \C$ for all $i = 0,1,\dots,n$ and $(c_0,c_1,\dots,c_n) \neq 0$. Thus, owing to \eqref{eq:def_posdef} we see that $\langle p,p\rangle_s > 0$. Indeed,
\[\Re \sum_{i,j=0}^n s_{i+j} \bar{c_i}c_j = \sum_{i,j=0}^n s_{i+j} \Re \big ( \bar{c_i}c_j\big )\,,\]
and
\[\Re (\bar{c_i}c_j) = \Re (c_i) \Re (c_j) + \Im (c_i) \Im (c_j)\,,\]
so that
\[\sum_{i,j=0}^n s_{i+j} \bar{c_i}c_j = \sum_{i,j=0}^n s_{i+j} \Re (c_i) \Re (c_j) + \sum_{i,j=0}^n s_{i+j} \Im (c_i) \Im (c_j)\,.\]
The other properties of scalar products are easily checked.


Therefore the completion of the space of polynomials on $\R$ equipped with the scalar product $\langle \cdot, \cdot\rangle_s$ and corresponding norm $\Vert \cdot \Vert_{s}^2 := \langle \cdot, \cdot\rangle_s$ gives us a Hilbert space that we call $\cH_s$. We also note that the multiplication operator on the space of all polynomials over $\R$, $p(x) \xmapsto{T_x} xp(x)$ is a symmetric and \emph{densely defined} operator on $\cH_s$. Indeed,
\[\langle xp(x), q(x) \rangle = \mathscr{L}(\overline{xp(x)}q(x)) = \mathscr{L}(\overline{p(x)}xq(x)) = \langle p(x), xq(x)\rangle\,.\]
It is also known that $T_x$ has deficiency indices of either $(0,0)$ or $(1,1)$ \cite[Corollary~6.7]{schmu_moment}.

\begin{theorem}[Theorem~6.10 \cite{schmu_moment}]\label{th:Hamburger_determinancy}
Let $\mathscr{L}$ be a real-valued linear functional on the space of real polynomials on $\R$, and suppose the sequence $s$ defined by \eqref{eq:sn_definition} is positive definite. Then the Hamburger moment problem is determinate if and only if the multiplication operator $T_x$ on $\cH_s$ has deficiency indices $(0,0)$, or in other words, $T_x$ is essentially self-adjoint on the space $\cH_s$. If this holds and $\mu$ is the unique representing measure for $\mathscr{L}$, then the space of polynomials on $\R$ is dense in the space $L^2(\R,\mu)$, i.e., $\cH_s \cong L^2(\R,\mu)$.
\end{theorem}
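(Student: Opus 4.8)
The plan is to build a dictionary between representing measures for $\mathscr{L}$ and self-adjoint realisations of the symmetric operator $T_x$ on $\cH_s$, and then read off determinacy from the uniqueness of such a realisation. Write $\varphi_0 := 1 \in \cH_s$ for the constant polynomial; since $\{T_x^n\varphi_0 = x^n\}_{n \in \N_0}$ spans the polynomials, $\varphi_0$ is a cyclic vector for $T_x$ and for every operator extending it. First I would record the two directions of the dictionary. Given any representing measure $\mu$, the identity $p \mapsto p$ extends to an isometry $\iota_\mu : \cH_s \to L^2(\R,\mu)$, because $\Vert p\Vert_s^2 = \mathscr{L}(\vert p\vert^2) = \int_\R \vert p\vert^2 \, \rd\mu$; under $\iota_\mu$ the operator $T_x$ corresponds to the restriction to polynomials of the self-adjoint multiplication operator $M_x$ on $L^2(\R,\mu)$, and $\mu(\Omega) = \langle E_{M_x}(\Omega)\iota_\mu\varphi_0, \iota_\mu\varphi_0\rangle_{L^2(\mu)}$ with $\iota_\mu\varphi_0 = 1$. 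Conversely, every self-adjoint extension $B \supseteq \overline{T_x}$ on $\cH_s$ produces a representing measure $\mu_B(\Omega) := \langle E_B(\Omega)\varphi_0,\varphi_0\rangle_s$, since $\varphi_0 \in \dom(B^n)$ and $\int_\R x^n \, \rd\mu_B = \langle B^n\varphi_0,\varphi_0\rangle_s = \langle x^n, 1\rangle_s = \mathscr{L}(x^n) = s_n$.

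For the implication ``essentially self-adjoint $\Rightarrow$ determinate'', suppose $T_x$ has deficiency indices $(0,0)$, so $\overline{T_x}$ is self-adjoint and is the \emph{only} symmetric extension of $T_x$. Let $\mu$ be an arbitrary representing measure and put $\mathcal{K} := \overline{\iota_\mu(\cH_s)} \subseteq L^2(\R,\mu)$, the $L^2(\mu)$-closure of the polynomials. Transporting essential self-adjointness through $\iota_\mu$, the restriction of $M_x$ to the polynomials is essentially self-adjoint on $\mathcal{K}$, so its closure $A_\mathcal{K}$ is self-adjoint on $\mathcal{K}$. Since $A_\mathcal{K} \subseteq M_x$ with both self-adjoint on their respective spaces, $\mathcal{K}$ reduces $M_x$ (the resolvents of $M_x$ at $\pm i$ agree with those of $A_\mathcal{K}$ on $\mathcal{K}$, hence leave $\mathcal{K}$ invariant). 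But the constant function $1$ is cyclic for $M_x$ on $L^2(\R,\mu)$, because $\{f(M_x)1 = f\}$ is dense; a reducing subspace containing a cyclic vector is the whole space, so $\mathcal{K} = L^2(\R,\mu)$. This is exactly the density of the polynomials in $L^2(\R,\mu)$ and the isometric isomorphism $\cH_s \cong L^2(\R,\mu)$. Moreover $\iota_\mu$ now intertwines $\overline{T_x}$ with $M_x$, so $\mu = \langle E_{\overline{T_x}}(\cdot)\varphi_0,\varphi_0\rangle_s$ is determined by the single self-adjoint operator $\overline{T_x}$ alone; any two representing measures therefore coincide, giving determinacy together with the asserted density.

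For the converse I would argue by contrapositive. If $T_x$ is not essentially self-adjoint then, by the dichotomy already stated, its deficiency indices are $(1,1)$, and von Neumann's theory supplies a one-parameter family of distinct self-adjoint extensions $B \supseteq \overline{T_x}$ in $\cH_s$. Each such $B$ retains $\varphi_0$ as a cyclic vector, since $B^n\varphi_0 = x^n$ places all polynomials in the cyclic subspace of $\varphi_0$, which is dense in $\cH_s$. By the spectral theorem for an operator possessing a cyclic vector, such an operator is uniquely determined by the scalar spectral measure at that vector; hence $B_1 \neq B_2$ forces $\mu_{B_1} \neq \mu_{B_2}$. Two distinct self-adjoint extensions thus yield two distinct representing measures, so $\mathscr{L}$ is indeterminate.

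The main obstacle is the density step in the first implication: converting the \emph{abstract} essential self-adjointness of $T_x$ on $\cH_s$ into the \emph{concrete} density of polynomials inside each candidate space $L^2(\R,\mu)$. The clean route, as above, pairs two structural facts — that a self-adjoint operator contained in a self-adjoint operator forces a reducing subspace, and that the constant function is automatically cyclic for multiplication by $x$ on $L^2(\R,\mu)$ — which together exclude the polynomials sitting as a proper closed subspace. This also sidesteps the subtlety that representing measures can in general arise from self-adjoint dilations in \emph{larger} Hilbert spaces: in the $(0,0)$ case no symmetric extension of $T_x$ exists at all, so every representing measure is forced to be the canonical one, and one never needs the theory of generalised resolvents.
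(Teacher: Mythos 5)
Your forward implication (deficiency indices $(0,0)$ $\Rightarrow$ determinacy, plus density of the polynomials in $L^2(\R,\mu)$) is correct and complete: the isometry $\iota_\mu$, the graph inclusion $A_{\mathcal{K}} \subseteq M_x$, the reduction of $M_x$ by $\mathcal{K}$ via invariance of the resolvents at $\pm i$, and the cyclicity of the constant function together force $\mathcal{K} = L^2(\R,\mu)$, after which the identity $\mu(\cdot) = \langle E_{\overline{T_x}}(\cdot)\varphi_0,\varphi_0\rangle_s$ pins down every representing measure. Note that the paper itself offers no proof of this statement (it is quoted from Schm\"udgen's monograph), so your argument stands or falls on its own.

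The converse direction, however, rests on a false principle. You assert that ``by the spectral theorem for an operator possessing a cyclic vector, such an operator is uniquely determined by the scalar spectral measure at that vector'', and from this you conclude $B_1 \neq B_2 \Rightarrow \mu_{B_1} \neq \mu_{B_2}$. That principle is wrong in general: on $L^2([0,1],\mathrm{d}x)$ the operators of multiplication by $x$ and by $1-x$ both have the constant function $1$ as a cyclic vector, and both have scalar spectral measure at $1$ equal to Lebesgue measure on $[0,1]$ (reflection preserves it), yet they are distinct operators. The spectral theorem determines an operator with a cyclic vector only \emph{up to unitary equivalence}, not as an operator on the given space. What rescues your argument --- and must be made explicit --- is that $B_1$ and $B_2$ are not merely two operators sharing a cyclic vector: they both extend $T_x$, hence agree on the dense subspace of polynomials. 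Concretely, if $\mu_{B_1} = \mu_{B_2} =: \nu$, the cyclic form of the spectral theorem provides unitaries $U_i : \cH_s \to L^2(\R,\nu)$ with $U_i B_i U_i^{-1} = M_x$ and $U_i\varphi_0 = 1$; since $B_i^n\varphi_0 = x^n$ in $\cH_s$, the intertwining gives $U_i\bigl(B_i^n\varphi_0\bigr) = M_{x}^n\,1 = x^n$, so each $U_i$ sends the polynomial $p \in \cH_s$ to the function $p \in L^2(\R,\nu)$. Thus $U_1$ and $U_2$ coincide on a dense set, hence $U_1 = U_2$, and therefore $B_1 = U_1^{-1}M_xU_1 = U_2^{-1}M_xU_2 = B_2$, contradicting $B_1 \neq B_2$. (Alternatively one can argue via Krein's formula: the resolvent difference of two distinct self-adjoint extensions of $\overline{T_x}$ is rank one with range in $\ker(T_x^* - z)$, and $\varphi_0$ is never orthogonal to that deficiency space because $\langle x^n, \phi_z\rangle_s = z^n\langle \varphi_0,\phi_z\rangle_s$ and the polynomials are dense in $\cH_s$.) With either repair in place of the false principle, your proof is complete.
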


\begin{corollary}[Corollary~6.11 \cite{schmu_moment}]\label{cor:L2density_Hamburgerdeterminacy}
Suppose that $\mu$ is a positive Radon measure on $\R$ that defines a Hamburger moment functional $\mathscr{L}(p) := \int_\R p(x) \, \rd \mu$ on the real polynomials $p$ on $\R$. Then the moment sequence $s$ defined by \eqref{eq:sn_definition} is determinate, i.e., the Hamburger moment problem is determinate, if and only if the space of all polynomials on $\R$ is dense in $L^2(\R,(1+x^2)\rd\mu)$.
\end{corollary}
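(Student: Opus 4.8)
The plan is to translate the determinacy of $\mu$ into an essential self-adjointness statement via Theorem~\ref{th:Hamburger_determinancy}, and then to recognise density of polynomials in $L^2\big(\R,(1+x^2)\rd\mu\big)$ as exactly the assertion that the polynomials form a \emph{core} for the maximal multiplication operator on $L^2(\R,\mu)$. Let $\mathcal{P}$ denote the space of all polynomials on $\R$. First I would dispose of the degenerate case in which $\mu$ is supported on finitely many points: there $s$ is positive semidefinite but not positive definite, $\mu$ is trivially determinate, and $\mathcal{P}$ already exhausts the finite-dimensional space $L^2\big(\R,(1+x^2)\rd\mu\big)$, so both sides of the equivalence hold. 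Hence I may assume $\mu$ has infinite support, whereupon $s$ is positive definite (since $\sum_{i,j} s_{i+j}\bar{c_i}c_j = \int_\R \big|\sum_i c_i x^i\big|^2\rd\mu > 0$ for nonzero coefficient vectors) and Theorem~\ref{th:Hamburger_determinancy} applies.

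Next, set $\nu := (1+x^2)\rd\mu$ and introduce on $L^2(\R,\mu)$ the maximal multiplication operator $X f := x f$ with domain $\dom{X} := \{f \in L^2(\R,\mu)\,:\, xf \in L^2(\R,\mu)\}$, which is self-adjoint. Since $\mu$ represents $\mathscr{L}$, all moments are finite, so $\mathcal{P} \subset \dom{X}$ and $T_x = X|_{\mathcal{P}}$. The linchpin is the graph-norm identity: for $f \in \dom{X}$, $\Vert f\Vert_X^2 = \Vert f\Vert_{L^2(\mu)}^2 + \Vert xf\Vert_{L^2(\mu)}^2 = \int_\R (1+x^2)|f|^2\,\rd\mu$, so that $\big(\dom{X},\Vert\cdot\Vert_X\big)$ coincides, as a set and isometrically, with $L^2(\R,\nu)$. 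Therefore ``$\mathcal{P}$ is dense in $L^2(\R,\nu)$'' says precisely that $\mathcal{P}$ is a core for $X$, i.e.\ $\overline{T_x} = X$. An elementary argument (a graph-convergent sequence of polynomials converges $\mu$-a.e.\ along a subsequence) gives $\overline{T_x} \subset X$ with $\overline{T_x}f = xf$; and since $X$ is self-adjoint while a self-adjoint operator admits no proper symmetric extension, $\overline{T_x}$ is self-adjoint if and only if $\overline{T_x} = X$. Thus density of $\mathcal{P}$ in $L^2(\R,\nu)$ is equivalent to the essential self-adjointness of $T_x$ on $L^2(\R,\mu)$.

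Finally I would reconcile the ambient space $L^2(\R,\mu)$ used above with the space $\cH_s = \overline{\mathcal{P}}^{\,L^2(\mu)}$ appearing in Theorem~\ref{th:Hamburger_determinancy}. For the forward direction, if $\mu$ is determinate then Theorem~\ref{th:Hamburger_determinancy} gives both that $T_x$ is essentially self-adjoint on $\cH_s$ and that $\cH_s = L^2(\R,\mu)$; the equivalence just established then yields that $\mathcal{P}$ is dense in $L^2(\R,\nu)$. For the converse, if $\mathcal{P}$ is dense in $L^2(\R,\nu)$, then because $\dom{X} = L^2(\R,\nu)$ is dense in $L^2(\R,\mu)$ and the inclusion is norm-decreasing, $\mathcal{P}$ is dense in $L^2(\R,\mu)$ as well, so $\cH_s = L^2(\R,\mu)$; the equivalence then makes $T_x$ essentially self-adjoint on $\cH_s = L^2(\R,\mu)$, and Theorem~\ref{th:Hamburger_determinancy} delivers determinacy.

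The step I expect to be the genuine obstacle is this reconciliation of ambient Hilbert spaces: Theorem~\ref{th:Hamburger_determinancy} frames essential self-adjointness relative to $\cH_s$, whereas the weighted-density condition naturally lives in $L^2(\R,\mu)$, and one must check that in each direction the operative hypothesis already forces $\cH_s = L^2(\R,\mu)$ so that the two notions of essential self-adjointness agree. Everything else rests on the graph-norm identity $\Vert f\Vert_X^2 = \int_\R(1+x^2)|f|^2\rd\mu$, which is what converts the analytic density statement into the operator-theoretic core condition.
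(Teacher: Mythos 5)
Your proposal is correct. Bear in mind that the paper itself gives no proof of this statement: it is quoted as Corollary~6.11 of \cite{schmu_moment} and used as an imported tool, so the only meaningful comparison is with the standard argument in that reference --- and your route is essentially that argument, reconstructed from Theorem~\ref{th:Hamburger_determinancy}. The two pillars of your proof are sound: (a) the set-and-isometry identification of $\big(\dom{X},\Vert\cdot\Vert_X\big)$ with $L^2\big(\R,(1+x^2)\rd\mu\big)$, where $X$ is the maximal multiplication operator on $L^2(\R,\mu)$ and $\Vert f\Vert_X^2=\Vert f\Vert_{L^2(\mu)}^2+\Vert Xf\Vert_{L^2(\mu)}^2$, which converts density of the polynomials in the weighted space into the assertion that the polynomials are a core for $X$; and (b) maximality of self-adjoint operators among symmetric extensions, which converts the core property $\overline{T_x}=X$ into essential self-adjointness of $T_x$ and back. (Your parenthetical a.e.-convergence argument for $\overline{T_x}\subset X$ is unnecessary: this inclusion is immediate because $X$ is closed and $T_x\subset X$.) You are also right that the reconciliation of ambient Hilbert spaces is the one step that genuinely needs care, and you resolve it correctly in both directions: forward, determinacy together with the final clause of Theorem~\ref{th:Hamburger_determinancy} yields $\cH_s=L^2(\R,\mu)$ before the core equivalence is applied; backward, density in the weighted space combined with the norm-decreasing inclusion of the dense subspace $\dom{X}$ into $L^2(\R,\mu)$ yields the same identity before Theorem~\ref{th:Hamburger_determinancy} is invoked. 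Finally, splitting off the finitely-supported case is not pedantry but is actually required by the statement as reproduced in the paper: Theorem~\ref{th:Hamburger_determinancy} is stated under positive definiteness of $s$, whereas the Corollary is not, and your treatment of that case (determinacy because any representing measure must concentrate on the zeros of $\prod_i(x-x_i)^2$, density by Lagrange interpolation on a finite support) closes that gap correctly.
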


\begin{remark}
Under the condition that the sequence $s$ as defined by \eqref{eq:sn_definition} is positive definite, each representing measure $\mu$ for the Hamburger moment problem on $\R$ has infinite support, that is, its support contains an infinite subset of $\R$ (\cite[Proposition~3.11]{schmu_moment}). We note that for the case that $s$ is a positive \emph{semidefinite} sequence that the determinacy of the Hamburger moment problem is under full control. Indeed, in the case that there exists a real polynomial $p(x) = \sum_{i=0}^n \xi_i x^i$, $\xi_i \in \R$ for all $i$, such that the moment functional applied to $p^2$ is zero, i.e., $0 = \sum_{i,j = 0}^n \xi_i \xi_j s_{i+j} = \mathscr{L}(p^2)$, then any positive Radon measure representing $\mathscr{L}$ must have support at exactly finitely many points. In the case that a representing measure $\mu$ for $\mathscr{L}$ is supported on finitely many points, and thus has compact support, it is the only representing measure for the functional $\mathscr{L}$, i.e., the Hamburger moment problem is determinate (\cite[Corollary~4.2]{schmu_moment}).
\end{remark}

In the following proposition we give a sufficient condition that is also practical to check, known as Carleman's Condition, that ensures determinacy of the Hamburger moment problem. The statement and proof may be found in \cite[Theorem~4.3]{schmu_moment}.

\begin{proposition}[Carleman's Condition]\label{prop:CarlemanCondition}
Suppose that the sequence $(s_n)_{n \in \N_0}$ defined by \eqref{eq:sn_definition}, for a real-valued linear functional $\mathscr{L}$ on the real polynomials on $\R$, is a positive semidefinite sequence. If $(s_n)_{n \in \N_0}$ satisfies the Carleman Condition
\begin{equation}\label{eq:CarlemanCondition}
\sum_{n = 0}^\infty s_{2n}^{-\frac{1}{2n}} = +\infty
\end{equation}
then the Hamburger moment problem is determinate.
\end{proposition}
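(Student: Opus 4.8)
The plan is to reduce determinacy to an essential self\-adjointness statement for the multiplication operator $T_x$ and then verify the latter by exhibiting a quasi\-analytic cyclic vector. First I would dispose of the degenerate case: if the positive semidefinite sequence $s$ fails to be positive definite, then $\mathscr{L}(p^2)=0$ for some nonzero real polynomial $p$, and (by the discussion preceding this proposition, see \cite[Corollary~4.2]{schmu_moment}) every representing measure is then supported on finitely many points and is the unique one, so determinacy is automatic. Hence I may assume $s$ is positive definite, which is exactly the hypothesis under which $\cH_s$ and the symmetric operator $T_x$ are available and under which Theorem~\ref{th:Hamburger_determinancy} applies: determinacy is equivalent to $T_x$ being essentially self\-adjoint on $\cH_s$, i.e. to its deficiency indices being $(0,0)$.

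The key computation is that the constant polynomial $\mathbbm{1}\equiv 1$ is a cyclic vector for $T_x$ (indeed $T_x^n\mathbbm{1}=x^n$ and the polynomials are dense in $\cH_s$ by construction), together with
\[\norm{T_x^n \mathbbm{1}}{s}^2 = \langle x^n, x^n\rangle_s = \mathscr{L}(x^{2n}) = s_{2n}\,,\]
so that $\norm{T_x^n \mathbbm{1}}{s}^{-1/n} = s_{2n}^{-1/(2n)}$. Carleman's hypothesis \eqref{eq:CarlemanCondition} therefore asserts precisely that $\sum_{n\geqslant 1}\norm{T_x^n \mathbbm{1}}{s}^{-1/n} = +\infty$, i.e. that $\mathbbm{1}$ is a \emph{quasi\-analytic vector} for $T_x$. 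I would then invoke the quasi\-analytic vector criterion (due to Nussbaum): a symmetric operator admitting a total set of quasi\-analytic vectors is essentially self\-adjoint. To feed the criterion I must check that the total set $\{x^k\}_{k\in\N_0}$ consists of quasi\-analytic vectors; this uses $\norm{T_x^n x^k}{s}^2 = s_{2(n+k)}$ together with the log\-convexity of $(s_{2n})_n$, which follows from Cauchy–Schwarz applied to the representing measure (namely $s_{2n}^2 \leqslant s_{2n-2}\,s_{2n+2}$), and a routine comparison then promotes quasi\-analyticity of $\mathbbm{1}$ to each $x^k$. With a total set of quasi\-analytic vectors in hand, $T_x$ is essentially self\-adjoint and the problem is determinate.

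The main obstacle is the quasi\-analytic vector theorem itself, which is the genuinely nonelementary input; once it is taken as known, the remaining steps are the short norm identity above and the comparison argument via log\-convexity. A self\-contained alternative that bypasses $T_x$ runs through the Denjoy–Carleman theorem on quasi\-analytic function classes: given two representing measures $\mu,\nu$ for $\mathscr{L}$, set $\sigma := \mu - \nu$ and
\[f(t) := \int_\R e^{\mathrm{i} t x}\,\rd\sigma(x)\,.\]
All moments of $\sigma$ vanish, so $f^{(n)}(0)=0$ for every $n$, while $|f^{(n)}(t)| \leqslant \int_\R |x|^n\,\rd(\mu+\nu) \leqslant 2\sqrt{s_0}\,\sqrt{s_{2n}}$ by Cauchy–Schwarz. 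The same log\-convexity of $(s_{2n})_n$ makes Carleman's sum coincide with the Denjoy–Carleman divergence condition for the majorant sequence $M_n := 2\sqrt{s_0}\,\sqrt{s_{2n}}$, so $f$ lies in a quasi\-analytic class; since all its derivatives vanish at the origin, $f\equiv 0$, whence $\widehat{\sigma}\equiv 0$, $\sigma=0$, and $\mu=\nu$. In this route the hard part is again external, namely the Denjoy–Carleman theorem.
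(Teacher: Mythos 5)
Your proposal cannot be checked line-by-line against the paper, because the paper contains no proof of this proposition at all: it states the result and defers entirely to \cite[Theorem~4.3]{schmu_moment}. Measured against that, both of your routes are correct and classical. Your second route --- putting the Fourier--Stieltjes transform of $\sigma=\mu-\nu$ into a quasi-analytic Denjoy--Carleman class and concluding $\sigma=0$ --- is essentially the argument behind the cited reference, so it reconstructs the proof the paper implicitly invokes. Your first route, via Nussbaum's quasi-analytic vector theorem, is genuinely different and arguably fits this paper better: it runs through the operator-theoretic characterisation of determinacy that the paper records as Theorem~\ref{th:Hamburger_determinancy}, your reduction of the degenerate (semidefinite but not definite) case correctly restores the positive-definiteness hypothesis that theorem requires --- matching the paper's own remark citing \cite[Corollary~4.2]{schmu_moment} --- and the notion of quasi-analytic vector it hinges on is exactly the class $\mathcal{D}^{qa}$ of Definition~\ref{def:vector_classes}, later exploited in Proposition~\ref{prop:quasianalytic_determinate}; Nussbaum's theorem itself is available in the paper's other standard reference \cite[Chapter~7]{schmu_unbdd_sa}. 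What the operator route buys is this structural coherence with the rest of the paper; what the Denjoy--Carleman route buys is independence from the spectral-theoretic machinery of Theorem~\ref{th:Hamburger_determinancy}.

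One step in the first route deserves more than the word ``routine''. Quasi-analyticity of $\mathbbm{1}$ is $\sum_n s_{2n}^{-1/(2n)}=+\infty$, but for $x^k$ you need $\sum_n s_{2(n+k)}^{-1/(2n)}=+\infty$, and a term-by-term comparison with the tail of the original series fails, since for $s_{2(n+k)}\geqslant 1$ one has $s_{2(n+k)}^{-1/(2n)}\leqslant s_{2(n+k)}^{-1/(2(n+k))}$, i.e.\ the inequality points the wrong way. The correct bridge is the one you gesture at: by log-convexity the ratios $r_m:=s_{2m}/s_{2m-2}$ are non-decreasing, and Carleman's inequality shows that the root condition $\sum_m s_{2m}^{-1/(2m)}=+\infty$ is equivalent to the ratio condition $\sum_m r_m^{-1/2}=+\infty$, which is manifestly stable under discarding finitely many terms, i.e.\ under the shift $m\mapsto m+k$; converting back to root form for the shifted sequence then gives exactly what Nussbaum's theorem needs. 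The same log-convexity is what legitimises applying the Denjoy--Carleman theorem to the majorants $M_n=2\sqrt{s_0}\,\sqrt{s_{2n}}$ in your second route. With these points made explicit, both of your arguments are complete proofs of the proposition.
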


\subsection{Structural Properties of $\Kc{A}{g}$}\label{subsec:selfadjoint_struct_Krylov}~

We now begin our investigation of the structural properties of the Krylov subspace $\Kc{A}{g}$ under the condition that $A$ is a self-adjoint operator on a Hilbert space $\cH$. In order to proceed, we first describe some important classes of vectors, namely the bounded, analytic, and quasi-analytic classes. More information on these classes may be found in \cite[Chapter~7]{schmu_unbdd_sa}.

\begin{definition}\label{def:vector_classes}
Let $A$ be a self-adjoint operator on a Hilbert space $\cH$, and let $g \in C^\infty(A)$. We say that
\begin{itemize}
	\item[(i)] $g$ is of the \emph{bounded class} with respect to $A$, i.e., $g\in\mathcal{D}^{b}(A)$, if there exists some $B_g > 0$ such that $\Vert A^n g\Vert_\cH \leqslant B_g^n$ for all $n \in \N$,
	\item[(ii)] $g$ is of the \emph{analytic class} with respect to $A$, i.e., $g\in\mathcal{D}^{a}(A)$, if there exists some $C_g > 0$ such that $\Vert A^n g\Vert_\cH \leqslant C_g^n n!$,
	\item[(iii)] and $g$ is of the \emph{quasi-analytic class} with respect to $A$, i.e., $g\in\mathcal{D}^{qa}(A)$, if \[\sum_{n = 1}^\infty \Vert A^n g\Vert_\cH^{-\frac{1}{n}} = +\infty\,.\] 
\end{itemize}
Clearly we have that $\mathcal{D}^b(A) \subset \mathcal{D}^a(A) \subset \mathcal{D}^{qa}(A)$. Moreover, $\mathcal{D}^b(A)$ and $\mathcal{D}^a(A)$ are linear subspaces of $\cH$, while $\mathcal{D}^{qa}(A)$ is not necessarily so.
\end{definition}

We note that for $A$ a self-adjoint operator and $g \in C^\infty(A)$ we have a positive bounded Radon measure on the real line $\mu_g^{(A)}(\Omega) := \langle E^{(A)}(\Omega)g, g\rangle$, for $\Omega \subset \R$ a Borel measurable set and $E^{(A)}(\Omega)$ the unique projection valued spectral measure for $A$. Therefore, we can define a linear functional $\mathscr{L}_\mu$ on the space of polynomials, as in Remark~\ref{rem:determinacy_HMP_selfadjoint}, as follows
\begin{equation}\label{eq:Lmu}
\mathscr{L}_\mu(p(\lambda)) := \int_\R p(\lambda) \, \rd \mu_g^{(A)}\,,
\end{equation}
and this gives rise to a positive semidefinite moment sequence $s \equiv (s_n)_{n \in \N_0}$, where
\begin{equation}\label{eq:Lmu_s}
s_n := \mathscr{L}_\mu(\lambda^n)\,,
\end{equation}
for all $n \in \N_0$. It is simple to see that $s$ is positive semidefinite: indeed as $\mu_g^{(A)}$ is a positive bounded measure and $g \in C^\infty(A)$, $\mathscr{L}_\mu(\lambda^n) < +\infty$ for all $n \in \N_0$ and also $\mathscr{L}(p^2(\lambda)) \geqslant 0$ for any real polynomial $p$. Therefore owing to Theorem~\ref{th:Hamburger_solutions} we have that $s$ is positive semidefinite. As the functional $\mathscr{L}_\mu$ is constructed from the positive Radon measure $\mu_g^{(A)}$ on $\R$ and $\lambda^n \in L^1(\R,\mu_g^{(A)})$ we shall concern ourselves only with the \emph{determinacy} problem. In other words, we wish to know whether the measure $\mu_g^{(A)}$ is the only positive Radon measure that solves the Hamburger moment problem. To this end, we list below a sufficient condition for this to be the case.

\begin{proposition}\label{prop:quasianalytic_determinate}
Let $A$ be a self-adjoint operator on a Hilbert space $\cH$ and let $g \in \mathcal{D}^{qa}(A)$. Then the Hamburger moment problem with moment functional as in \eqref{eq:Lmu} is determinate. Moreover, $\Kc{A}{g} \cong L^2\big(\R,\mu_g^{(A)}\big)$.
\end{proposition}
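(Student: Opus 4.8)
The plan is to treat the two assertions separately: first establish determinacy by reducing the quasi-analytic hypothesis to Carleman's condition, then obtain the isometric isomorphism by pairing the spectral theorem with the density of polynomials that determinacy provides.

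For the determinacy, the key observation is that the even moments of $\mu_g^{(A)}$ are exactly the quantities appearing in the definition of $\mathcal{D}^{qa}(A)$. With $s_n := \mathscr{L}_\mu(\lambda^n)$ as in \eqref{eq:Lmu_s}, the functional calculus for the self-adjoint operator $A$ gives
\[ s_{2n} = \int_\R \lambda^{2n} \, \rd\mu_g^{(A)} = \langle A^{2n} g, g\rangle = \langle A^n g, A^n g\rangle = \Vert A^n g\Vert_\cH^2\,, \]
using $A = A^*$. Hence $s_{2n}^{-1/(2n)} = \Vert A^n g\Vert_\cH^{-1/n}$, so the Carleman series \eqref{eq:CarlemanCondition} coincides term-by-term with the defining series of the quasi-analytic class. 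Since $g \in \mathcal{D}^{qa}(A)$ the latter diverges, and $s$ is positive semidefinite by Theorem~\ref{th:Hamburger_solutions} (cf.\ Remark~\ref{rem:determinacy_HMP_selfadjoint}); Proposition~\ref{prop:CarlemanCondition} then delivers determinacy of the Hamburger moment problem associated to \eqref{eq:Lmu}.

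For the isomorphism, I would introduce the map $U$ sending $p(A) g \mapsto p$ on $\K{A}{g}$ and verify that it is isometric onto the polynomials viewed inside $L^2\big(\R,\mu_g^{(A)}\big)$: for polynomials $p,q$,
\[ \langle p(A) g, q(A) g\rangle_\cH = \langle \overline{p}(A) q(A) g, g\rangle = \int_\R \overline{p(\lambda)} q(\lambda) \, \rd\mu_g^{(A)} = \langle p, q\rangle_{L^2(\mu_g^{(A)})}\,, \]
again by self-adjointness and the functional calculus. In particular $U$ is well-defined (the isometry forces $p(A)g = q(A)g$ iff $p = q$ in $L^2$) and isometric, so it extends to an isometric isomorphism from $\Kc{A}{g}$ onto the $L^2$-closure $\overline{\Poly}^{L^2(\mu_g^{(A)})}$ of the polynomials. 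It therefore remains only to show that the polynomials are dense in $L^2\big(\R,\mu_g^{(A)}\big)$.

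This density step is where determinacy enters and is the part requiring the most care. Having established determinacy, Corollary~\ref{cor:L2density_Hamburgerdeterminacy} yields density of the polynomials in $L^2\big(\R,(1+\lambda^2)\rd\mu_g^{(A)}\big)$. To transfer this to $L^2\big(\R,\mu_g^{(A)}\big)$ I would observe that the inclusion $L^2\big(\R,(1+\lambda^2)\rd\mu_g^{(A)}\big) \hookrightarrow L^2\big(\R,\mu_g^{(A)}\big)$ is norm-decreasing with dense range, since any $f \in L^2(\mu_g^{(A)})$ is approximated in $L^2(\mu_g^{(A)})$ by its truncations $f\mathbbm{1}_{[-R,R]}$, which lie in the weighted space; a polynomial sequence converging in the weighted norm thus converges a fortiori in $L^2(\mu_g^{(A)})$, and a two-step approximation gives $\overline{\Poly}^{L^2(\mu_g^{(A)})} = L^2\big(\R,\mu_g^{(A)}\big)$. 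Combined with the isometry $U$ this yields $\Kc{A}{g} \cong L^2\big(\R,\mu_g^{(A)}\big)$. The main obstacle is precisely this density transfer; as an alternative route, when $s$ is positive definite one may invoke Theorem~\ref{th:Hamburger_determinancy} directly to obtain density in $L^2\big(\R,\mu_g^{(A)}\big)$, handling the degenerate finite-support (merely positive semidefinite) case separately by finite-dimensional interpolation.
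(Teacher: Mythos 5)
Your proof is correct, and its two halves relate to the paper's as follows. The determinacy half is exactly the paper's argument: $s_{2n} = \Vert A^n g\Vert_\cH^2$ via the functional calculus, so the Carleman series coincides with the quasi-analyticity series and Proposition~\ref{prop:CarlemanCondition} applies. The isomorphism half follows the same functional-calculus skeleton as the paper (Remark~\ref{rem:KisomorphismL2} constructs the inverse of your $U$, namely $f \mapsto f(A)g$), but you source the density of the polynomials in $L^2\big(\R,\mu_g^{(A)}\big)$ differently, and this is where your route is genuinely distinct and arguably tighter. The paper obtains density by citing Theorem~\ref{th:Hamburger_determinancy} directly, which as stated assumes the moment sequence is positive \emph{definite} --- an assumption that fails precisely when $\mu_g^{(A)}$ has finite support (i.e.\ $g$ is a finite combination of eigenvectors), a degenerate case the paper leaves implicit. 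Your main route through Corollary~\ref{cor:L2density_Hamburgerdeterminacy} (density in the weighted space $L^2\big(\R,(1+\lambda^2)\rd\mu_g^{(A)}\big)$) followed by the truncation/transfer argument requires no definiteness hypothesis, so it covers the degenerate case uniformly; the transfer step itself is sound, since the truncations $f\chi_{[-R,R]}$ lie in the weighted space and approximate $f$ in $L^2\big(\R,\mu_g^{(A)}\big)$, and the weighted norm dominates the unweighted one. Your proposed fallback (Theorem~\ref{th:Hamburger_determinancy} plus finite-dimensional interpolation in the semidefinite case) is also valid, and is in effect what the paper's citation tacitly relies on. One cosmetic slip: with the paper's convention that the scalar product is anti-linear in the \emph{first} slot, one has $\langle p(A)g, q(A)g\rangle = \langle g, \overline{p}(A)q(A)g\rangle$ rather than $\langle \overline{p}(A)q(A)g, g\rangle$ (the two differ by conjugation); your final expression $\int_\R \overline{p(\lambda)}q(\lambda)\,\rd\mu_g^{(A)}$ is nevertheless the correct one, and the isometry, which only uses $p=q$, is unaffected.
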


\begin{proof}
Let $\mu_g^{(A)}(\cdot) = \langle E^{(A)}(\cdot)g, g\rangle$ be the positive Radon measure generated by the projection valued spectral measure $E^{(A)}$ for $A$. By the functional calculus
\[\Vert A^n g\Vert_\cH^2 = \int_\R \lambda^{2n} \, \rd \mu_g^{(A)}\,,\]
so that for the Hamburger moment sequence $s \equiv (s_n)_{n \in \N_0}$ defined by $s_n := \int_\R \lambda^n \,\rd \mu_g^{(A)}$ for all $n \in \N_0$, we see that $s_{2n} = \Vert A^n g\Vert_\cH^2$ for all $n \in \N_0$.

Then, as $g$ is quasi-analytic,
\[\sum_{n=1}^\infty s_{2n}^{-\frac{1}{2n}} = \sum_{n=1}^\infty \Vert A^n g\Vert_\cH^{-\frac{1}{n}} = +\infty\,,\]
so that the Carleman Condition is satisfied, and therefore the Hamburger moment problem is determinate.

The proof that $\Kc{A}{g} \cong L^2\big(\R,\mu_g^{(A)}\big)$ is a direct consequence of Theorem~\ref{th:Hamburger_determinancy}.
\end{proof}

\begin{remark}\label{rem:Kapprox_HamburgerDet}
We note that in the above Proposition, we actually have the more general result that $\Kc{A}{g} \cong L^2\big(\R,\mu_g^{(A)}\big)$ \emph{if} the Hamburger moment problem defined by \eqref{eq:Lmu} and \eqref{eq:Lmu_s} is determinate. As in the proof of Proposition~\ref{prop:quasianalytic_determinate} this is a direct consequence of Theorem~\ref{th:Hamburger_determinancy}.

In fact, with regards to Proposition~\ref{prop:quasianalytic_determinate}, it is well known \cite{schmu_moment} that the Carleman Condition is sufficient but not necessary to guarantee the determinacy of the moment problem.
\end{remark}

\begin{remark}\label{rem:KisomorphismL2}
We briefly explain and show the isomorphism between $\Kc{A}{g}$ and $L^2\big(\R,\mu_g^{(A)}\big)$ under the condition of the determinacy of the Hamburger moment problem. The reasoning that follows is merely a reshaping of the elements from the proof of \cite[Theorem~7.2]{CM-2019_ubddKrylov}, as well as \cite[Theorem~4.1]{C-KrylovNormal-2022} for bounded normal operators. We claim that when the Hamburger moment is determinate, there exists the following isomorphism
\begin{equation}\label{eq:KcisomorphismL2}
\begin{split}
L^2\big(\R,\mu_g^{(A)}\big) & \xrightarrow{\cong} \Kc{A}{g}\,,\\
f & \mapsto f(A)g\,,
\end{split}
\end{equation}\label{eq:KcisomorphismL2_funccalc}
where $f(A)$ is understood in terms of the functional calculus
\begin{equation}
f(A) := \int_\R f(\lambda) \, \rd E^{(A)}(\lambda)\,.
\end{equation}
Indeed, given any $f \in L^2\big(\R,\mu_g^{(A)}\big)$, as Theorem~\ref{th:Hamburger_determinancy} guarantees the space of polynomials is dense in $L^2\big(\R,\mu_g^{(A)}\big)$, there exists a sequence $(p_n)_{n \in \N}$ of polynomials such that $p_n \xrightarrow{L^2} f$ as $n \to \infty$. From the functional calculus, we see that
\[\Vert p_n(A) g - f(A) g \Vert_\cH^2 = \int_\R \vert p_n(\lambda) - f(\lambda) \vert^2 \, \rd \mu_g^{(A)} \xrightarrow{n \to \infty} 0\,.\]
Therefore $f(A)g \in \Kc{A}{g}$. Now suppose we have some $u \in \Kc{A}{g}$. Then there exists a sequence of polynomials $(q_n)_{n \in \N}$ such that $(q_n(A)g)_{n \in \N} \subset \Kc{A}{g}$ is a Cauchy sequence and converges strongly in $\cH$ to $u$. The functional calculus implies that the sequence of polynomials $(q_n)_{n \in \N}$ is Cauchy in $L^2\big(\R,\mu_g^{(A)}\big)$, indeed
\[\Vert q_n(A)g - q_m(A)g \Vert^2 = \int_\R \vert q_n(\lambda) - q_m(\lambda) \vert^2 \, \rd \mu_g^{(A)} = \Vert q_n - q_m \Vert_{L^2}\,, \quad \forall\, n,m \in \N\,.\]
Therefore $q_n$ converges to some $h \in L^2\big(\R,\mu_g^{(A)}\big)$, and thus $h(A)g = u$. It is clear that the mapping \eqref{eq:KcisomorphismL2} is a linear and continuous bijection between $L^2\big(\R,\mu_g^{(A)}\big)$ and $\Kc{A}{g}$.

\end{remark}

We now use Corollary~\ref{cor:L2density_Hamburgerdeterminacy} in order to show that the Krylov core condition holds when $g$ is a vector that makes the Hamburger moment problem determinate.

\begin{proposition}\label{prop:Kcore_Hamburgerderteminate}
Let $A$ be a self-adjoint operator on a Hilbert space $\cH$ and let $g \in C^\infty(A)$. If the Hamburger moment problem defined by \eqref{eq:Lmu} and \eqref{eq:Lmu_s} is determinate, then the Krylov core condition is satisfied, i.e., $\Kc{A}{g} \cap \mathcal{D}(A) = \Kc{A}{g}^V$. Moreover, we have the inclusions
\begin{equation}\label{eq:Kcore_Kred}
A \big(\Kc{A}{g}\cap\mathcal{D}(A)\big) \subset \Kc{A}{g}\,, \quad A \big(\K{A}{g}^\perp \cap \mathcal{D}(A)\big) \subset \K{A}{g}^\perp\,.
\end{equation}
\end{proposition}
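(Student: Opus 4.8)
The plan is to establish the Krylov core condition first, via the $L^2$-isomorphism of Remark~\ref{rem:KisomorphismL2}, and then deduce the two inclusions in \eqref{eq:Kcore_Kred} almost for free. Throughout I abbreviate $\mu \equiv \mu_g^{(A)}$. One inclusion of the core condition is automatic and requires no hypotheses: if $v \in \Kc{A}{g}^V$, then $v$ is a $\Vert\cdot\Vert_A$-limit of a sequence in $\K{A}{g}$, hence also a $\Vert\cdot\Vert_\cH$-limit, so $v \in \Kc{A}{g}$, while $v \in \dom{A}$ because $V = (\dom{A},\Vert\cdot\Vert_A)$. Thus $\Kc{A}{g}^V \subset \Kc{A}{g} \cap \dom{A}$ always holds.

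The substance lies in the reverse inclusion. Given $u \in \Kc{A}{g} \cap \dom{A}$, the determinacy hypothesis lets me invoke the isomorphism \eqref{eq:KcisomorphismL2} and write $u = h(A)g$ for a unique $h \in L^2(\R,\mu)$. I would then observe, through the functional calculus, that membership $u \in \dom{A}$ is equivalent to $\int_\R \lambda^2 |h(\lambda)|^2 \, \rd\mu < \infty$, i.e.\ to $h \in L^2\big(\R,(1+\lambda^2)\rd\mu\big)$, in which case $Au = (\lambda h)(A)g$. Now Corollary~\ref{cor:L2density_Hamburgerdeterminacy} applies: since the moment problem is determinate, the polynomials are dense in $L^2\big(\R,(1+\lambda^2)\rd\mu\big)$, so there is a sequence of polynomials $(p_n)_{n\in\N}$ with $p_n \to h$ in that weighted space. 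The key point is that this single convergence controls the entire graph norm. Splitting $\int_\R |p_n - h|^2 (1+\lambda^2)\,\rd\mu = \int_\R |p_n - h|^2 \,\rd\mu + \int_\R \lambda^2 |p_n-h|^2 \,\rd\mu$, the first term equals $\Vert p_n(A)g - u\Vert_\cH^2$ and the second equals $\Vert A\,p_n(A)g - Au\Vert_\cH^2$ by the functional calculus. Hence $p_n(A)g \to u$ in $\Vert\cdot\Vert_A$, giving $u \in \Kc{A}{g}^V$ and completing the identity $\Kc{A}{g}\cap\dom{A} = \Kc{A}{g}^V$.

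With the core condition established, the two inclusions \eqref{eq:Kcore_Kred} follow immediately. For the first, substituting $\Kc{A}{g}\cap\dom{A} = \Kc{A}{g}^V$ and applying Proposition~\ref{prop:invariance} yields $A\big(\Kc{A}{g}\cap\dom{A}\big) = A\,\Kc{A}{g}^V \subset \Kc{A}{g}$. For the second, self-adjointness gives $A = A^*$ and $\dom{A} = \dom{A^*}$, so the desired inclusion is precisely the conclusion of Proposition~\ref{prop:invarianceunderadjoint} read with $A^*$ in place of $A$.

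The main obstacle is the reverse inclusion of the core condition, and everything hinges on correctly identifying $\dom{A}$-membership of $u = h(A)g$ with the weighted integrability $h \in L^2\big(\R,(1+\lambda^2)\rd\mu\big)$, which is exactly the space in which Corollary~\ref{cor:L2density_Hamburgerdeterminacy} guarantees polynomial density under determinacy. Once that identification is in place, the graph-norm convergence falls out of the single weighted-$L^2$ approximation, and the two reducibility inclusions are then routine consequences of results already proved.
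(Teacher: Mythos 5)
Your proposal is correct and follows essentially the same route as the paper's proof: both reduce the reverse inclusion to representing $u = h(A)g$ via the isomorphism of Remark~\ref{rem:KisomorphismL2}, note that $u \in \dom{A}$ forces $h \in L^2\big(\R,(1+\lambda^2)\mu_g^{(A)}\big)$, apply the polynomial density of Corollary~\ref{cor:L2density_Hamburgerdeterminacy} to get graph-norm approximation, and then obtain \eqref{eq:Kcore_Kred} from Propositions~\ref{prop:invariance} and \ref{prop:invarianceunderadjoint}. No gaps; this matches the paper's argument step for step.
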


\begin{proof}
It is already clear that $\Kc{A}{g}^V \subset \Kc{A}{g} \cap \mathcal{D}(A)$ so our proof will show that $\Kc{A}{g}^V \supset \Kc{A}{g} \cap \mathcal{D}(A)$. Indeed, take any vector $v \in \Kc{A}{g} \cap \mathcal{D}(A)$. Then the vector $v$ can be represented as a function $f$ in $L^2\big(\R,\mu_g^{(A)}\big)$ by way of the functional calculus (Remark~\ref{rem:KisomorphismL2})
\[v = f(A)g = \int_\R f(\lambda) \, \rd E^{(A)}g\,.\]
$\Vert Av\Vert_\cH^2 < +\infty$ as $v \in \mathcal{D}(A)$ and therefore
\[\int_\R \lambda^2 \vert f(\lambda) \vert^2 \, \rd \mu_g^{(A)} =\Vert Av\Vert_\cH^2 < +\infty\,,\]
from which $f \in L^2\big(\R,(1+\lambda^2)\mu_g^{(A)}\big)$. 

As the Hamburger moment problem is determinate, by Corollary~\ref{cor:L2density_Hamburgerdeterminacy} we know that the polynomials are dense in $L^2\big(\R,(1+\lambda^2)\mu_g^{(A)}\big)$ and therefore there exists a polynomial approximation $(p_n)_{n \in \N}$ such that $p_n \xrightarrow{n \to \infty} f$ in $L^2\big(\R,(1+\lambda^2)\mu_g^{(A)}\big)$. Thus
\begin{align*}
\Vert p_n(A)g - f(A)g \Vert_{A}^2 & = \Vert p_n(A)g - f(A)g\Vert_\cH^2 + \Vert Ap_n(A)g - Af(A)g\Vert_\cH^2 \\
 & = \int_\R (1 + \lambda^2)\vert p_n(\lambda) - f(\lambda) \vert^2 \, \rd \mu_g^{(A)} \xrightarrow{n \to \infty} 0\,,
\end{align*}
and therefore $v = f(A)g \in \Kc{A}{g}^V$. By Propositions~\ref{prop:invarianceunderadjoint} and \ref{prop:invariance} we have the inclusions \eqref{eq:Kcore_Kred}.
\end{proof}

\begin{remark}\label{rem:prop5.4}
A proof that the class of bounded vectors satisfies the Krylov core condition is presented in \cite[Theorem~7.1]{CM-2019_ubddKrylov} and also in \cite[Theorem~4.4]{CM-book-Krylov-2022}. These two proofs were done in an explicit way using a regularising sequence of vectors (\cite[Theorem~7.1]{CM-2019_ubddKrylov}) and using a purely measure-theoretic approach (\cite[Theorem~4.4]{CM-book-Krylov-2022}). The proof of Proposition~\ref{prop:Kcore_Hamburgerderteminate} is very direct and straightforward in comparison to the previous two studies, yet it requires the non-trivial knowledge of results in the theory of the moment problem in one dimension.
\end{remark}

\begin{remark}\label{rem:open_questions}
At this point we remark on the two open questions posed in \cite{CM-2019_ubddKrylov}. These were
\begin{itemize}
	\item[(Q1)] When $A$ is self-adjoint and $g \in C^\infty(A)$, is it true that $\Kc{A}{g}^V = \Kc{A}{g}\cap\mathcal{D}(A)$?
	\item[(Q2)] When $A$ is self-adjoint and $g \in C^\infty(A)$ is it true that $A \big( \Kc{A}{g} \cap \mathcal{D}(A)\big) \subset \Kc{A}{g}$?
\end{itemize}
We note that in \cite{CM-2019_ubddKrylov} a partial answer in the affirmative for both (Q1) and (Q2) was given in Theorem~7.1 therein, namely for the case in which $g \in \mathcal{D}^b(A)$. Our Proposition~\ref{prop:Kcore_Hamburgerderteminate} expands on this answer by showing that the entire class of smooth vectors that result in a determinate Hamburger moment problem provide a positive answer to these two questions.
\end{remark}

We shall now prove a stronger result than \eqref{eq:Kcore_Kred} for the class of bounded vectors, that will then be extended to larger classes by way of a perturbative analysis in Section \ref{sec:selfadjoint_perturbations}. Before we do so, we shall state an important approximation theorem for continuous functions on $\R$ called Carleman's Theorem (not to be confused with Proposition~\ref{prop:CarlemanCondition}, namely Carleman's Condition for the determinacy of the Hamburger moment problem).

\begin{theorem}[Chapter~IV, Section~3, Theorem~1 \cite{Gaier_ComplexApproximation}]\label{th:Carleman}
Let $f:\R \to \C$ be a continuous function. Given any strictly positive function $\varepsilon:\R \to [0,\infty)$, there exists an entire function $h:\C \to \C$ such that 
\[\vert h(x) - f(x) \vert < \varepsilon(x)\,,\quad \forall\,x \in\R\,.\]
\end{theorem}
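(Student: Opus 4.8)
The plan is to produce $h$ as the limit of a sequence of polynomials $(g_n)_{n\in\N}$ that converges \emph{locally uniformly on all of $\C$}, so that the limit is automatically entire by Weierstrass' theorem on uniform limits of holomorphic functions, while arranging that its restriction to $\R$ stays within $\varepsilon$ of $f$. Since shrinking $\varepsilon$ only strengthens the conclusion, I would first replace $\varepsilon$ by a continuous strictly positive minorant, so that $\varepsilon_n := \min_{|x|\le n}\varepsilon(x) > 0$ for every $n$.

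The core is a telescoping construction. Writing $g_0 := 0$, I would build polynomials $g_n$ meeting two competing requirements: a \emph{complex} estimate $|g_n(z)-g_{n-1}(z)| < \delta_n$ for all $z$ in the closed disc $\overline{D(0,n-1)}$, where the $\delta_n>0$ are fixed in advance so small that $\sum_{k>m}\delta_k < \tfrac12\varepsilon_m$ for every $m$ (for instance $\delta_n := 2^{-n}\varepsilon_n$); and a \emph{real} estimate $|g_n(x)-f(x)| < \tfrac12\varepsilon(x)$ on the pair of segments $J_n := [-n,-(n-1)]\cup[n-1,n]$ newly adjoined at step $n$. Granting such $g_n$, the complex estimate together with $\sum_n\delta_n<\infty$ makes $(g_n)$ uniformly Cauchy on every compact subset of $\C$, whence $h:=\lim_n g_n$ is entire. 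For the real bound, fix $x$ and let $m:=\lceil|x|\rceil$; then $x\in J_m$, and for every $k>m$ one has $|x|\le k-1$, so $|h(x)-g_m(x)|\le\sum_{k>m}\delta_k<\tfrac12\varepsilon_m$. Combining with the real estimate at step $m$ yields $|h(x)-f(x)|<\tfrac12\varepsilon(x)+\tfrac12\varepsilon_m\le\varepsilon(x)$, using $\varepsilon_m\le\varepsilon(x)$.

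The inductive step is where the complex-analytic input enters and is the main obstacle. Given $g_{n-1}$, I would define a target $\psi$ on the compact set $K_n:=\overline{D(0,n-1)}\cup[-n,n]$ by $\psi:=g_{n-1}$ on the disc and $\psi\approx f$ on the two protruding whiskers, interpolating near the junctions $\pm(n-1)$ so that $\psi$ is continuous on $K_n$. The set $K_n$ is a disc with two whiskers, so $\C\setminus K_n$ is connected, and $\psi$ is continuous on $K_n$ and holomorphic on its interior; Mergelyan's theorem — or, after a preliminary Weierstrass approximation of $f$ on the whiskers, Runge's theorem — then supplies a polynomial $g_n$ with $|g_n-\psi|<\delta_n$ uniformly on $K_n$, delivering both estimates at once. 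The genuinely delicate point is the patching at $\pm(n-1)$, where $\psi$ is forced to equal $g_{n-1}(\pm(n-1))$ and hence inherits the \emph{coarser} error tolerance from the previous step, which can exceed the tighter pointwise budget $\tfrac12\varepsilon(x)$ just past the junction. The fix is to let consecutive segments \emph{overlap} and to perform the interpolation inside the overlap region, where the step-$(n-1)$ approximation already lies within budget, so that no point is ever required to meet a tolerance smaller than the one under which its value was originally fixed; the continuity of $f$ and of $\varepsilon$ is what keeps this organisation consistent across each junction.
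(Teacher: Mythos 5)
The paper itself contains no proof of this statement: Theorem~\ref{th:Carleman} is imported verbatim from Gaier's book and is only ever invoked with a \emph{constant} error function. So your proposal can only be measured against the classical argument, and in substance it \emph{is} the classical argument: a telescoping sequence of polynomials produced by Mergelyan's theorem on the disc-with-whiskers sets $K_n=\overline{D(0,n-1)}\cup[-n,n]$, summed into a locally uniformly convergent series. Your global bookkeeping is sound: the tail condition $\sum_{k>m}\delta_k\leqslant\frac12\varepsilon_m$ (using $\varepsilon_k\leqslant\varepsilon_m$ for $k>m$), entirety of the limit via Weierstrass, the split of the budget at a real point between the certifying step and the tail, and the applicability of Mergelyan (complement of $K_n$ connected, interior of $K_n$ equal to the open disc, $\psi$ holomorphic there) are all correct.

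There are, however, two genuine gaps. First, your opening reduction is impossible as stated: an arbitrary strictly positive $\varepsilon$ need not admit a continuous positive minorant, and in fact the theorem as transcribed in the paper is \emph{false} without some such hypothesis. Take $\varepsilon(p/q)=1/q$ (lowest terms) and $\varepsilon(x)=1$ for irrational $x$. If $h$ is entire and $|h-f|<\varepsilon$ on $\R$, then at each irrational $x$, approximating by rationals $p_k/q_k\to x$ (necessarily $q_k\to\infty$) gives $|(h-f)(p_k/q_k)|<1/q_k\to 0$, so $h(x)=f(x)$ by continuity of $h-f$; hence $h\equiv f$ on $\R$, absurd for $f(x)=|x|$. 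Gaier's hypotheses include continuity of $\varepsilon$; your proof establishes exactly that version, and the reduction step should be flagged as using that hypothesis rather than presented as harmless generality.

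Second, the inductive step --- which you rightly call the crux --- is not closed by what you wrote. With $\delta_n=2^{-n}\varepsilon_n$ fixed in advance and $J_{n-1}$, $J_n$ meeting only at the point $n-1$, the blend zone $(n-1,n-1+\theta]$ lies entirely outside the region where any previous estimate holds (your real estimate from step $n-1$ lives on $[n-2,n-1]$), and at the junction itself the inherited bound $\frac12\varepsilon(n-1)$ leaves no room to absorb the additional Mergelyan error $\delta_n$. Your ``overlap'' sentence points in a workable direction but omits the content that makes it work: with your indexing the segment of step $n-1$ ends exactly where the disc of step $n$ begins, so there is literally nothing to overlap, and a genuine overlap forces a decoupling of disc radii from segment lengths together with a reassignment of which step certifies which point. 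A shorter repair, available within your setup, is to observe that at its \emph{outer} endpoint the step-$(n-1)$ target was exactly $f$, so $|g_{n-1}(n-1)-f(n-1)|<\delta_{n-1}$, and then to choose the blend width $\theta_n$ adaptively small (using continuity of $g_{n-1}$, $f$, $\varepsilon$) so that the blend stays within $\frac12\varepsilon(x)-\delta_n$; neither observation appears in your sketch. Finally, the parenthetical claim that Runge's theorem suffices after a Weierstrass approximation on the whiskers is wrong as stated: the piecewise target is not holomorphic on any neighbourhood of the junctions, and restoring holomorphy by separating the disc from the whiskers leaves gap intervals on which no step of the construction ever controls $|g_n-f|$. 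Mergelyan (or a fusion-type lemma) is genuinely needed here.
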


Calculating the range of separations (Definition~\ref{def:subspace_dist2}) between $\Kc{A}{g}$ and \\ $A\big(\K{A}{g}^{\perpV}\big)$ gives us an indicator as to whether $\Kint{A}{g} = \{0\}$. Indeed, take any vector $v \in \K{A}{g}^{\perpV}$ and $u \in \K{A}{g}$ such that $\Vert u\Vert_\cH = 1$ and $\Vert Av\Vert_\cH = 1$. Then
\[\Vert u - Av\Vert_\cH^2 = 2\big(1 - \Re \langle u,Av\rangle\big)\,,\]
and
\[\langle u, Av\rangle = \int_{\R} p(\lambda)\lambda\,\rd \bigl \langle E^{(A)}(\lambda)g, v \bigr \rangle\,,\]
where $p$ is the polynomial such that $u = \int_{\R} p(\lambda) \,\rd E^{(A)}(\lambda)g = p(A)g$. Our aim is to show that $\langle u, Av \rangle=0$ under certain circumstances. We now state and prove a proposition central to our structural investigations of the Krylov subspace structure that indeed shows that $\langle u, Av\rangle$ is zero when $g$ is of the bounded class of vectors.

\begin{proposition}\label{prop:bddvec_invariances}
Let $A$ be a self-adjoint operator on a Hilbert space $\cH$ and let $g \in \mathcal{D}^b(A)$. Then $\K{A}{g}^{\perpV} \subset \K{A}{g}^\perp$ and $A\big(\K{A}{g}^{\perpV}\big) \subset \K{A}{g}^\perp$.
\end{proposition}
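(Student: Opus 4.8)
The plan is to push everything through the spectral theorem for $A$ and reduce both inclusions to a single statement about the moments of a finite complex measure. Fix $v \in \K{A}{g}^{\perpV}$ and introduce the complex Borel measure $\nu(\Omega) := \scalar{\specmeas{A}(\Omega)g}{v}$; by the Cauchy--Schwarz inequality for spectral measures its total variation obeys $\vert\nu\vert(\Omega) \leqslant \mug{A}{g}(\Omega)^{1/2}\,\mug{A}{v}(\Omega)^{1/2}$, and since $v,g \in \dom{A}$ the measure $\rd\sigma := (1+\lambda^2)\,\rd\nu$ is finite. Using the functional calculus and self-adjointness one computes $\scalarV{v}{A^k g} = \overline{\int_\R \lambda^k(1+\lambda^2)\,\rd\nu}$, so the defining conditions $\scalarV{v}{A^k g} = 0$ for $k \in \N_0$ are equivalent to the vanishing of every moment of $\sigma$, i.e.\ $\int_\R \lambda^k\,\rd\sigma = 0$ for all $k \in \N_0$.

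First I would note that it suffices to prove $\int_\R \lambda^m\,\rd\nu = 0$ for every $m \in \N_0$. Indeed $\scalar{v}{A^k g} = \overline{\int_\R \lambda^k\,\rd\nu}$ and, by self-adjointness, $\scalar{Av}{A^k g} = \overline{\int_\R \lambda^{k+1}\,\rd\nu}$; hence the vanishing of all moments of $\nu$ gives at once $v \perp \K{A}{g}$, which is the first inclusion $\K{A}{g}^{\perpV} \subset \K{A}{g}^\perp$, and $Av \perp \K{A}{g}$, which is the second inclusion $A\big(\K{A}{g}^{\perpV}\big) \subset \K{A}{g}^\perp$. Writing $\int_\R \lambda^m\,\rd\nu = \int_\R \frac{\lambda^m}{1+\lambda^2}\,\rd\sigma$, the problem reduces to showing that $\sigma$ annihilates the continuous function $\lambda \mapsto \lambda^m/(1+\lambda^2)$, knowing that it annihilates every polynomial.

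The bridge between these two facts is Carleman's Theorem (Theorem~\ref{th:Carleman}). Given $\delta > 0$, I would invoke it with the constant tolerance $\varepsilon \equiv \delta$ to produce an entire function $h$ with $\vert h(\lambda) - \lambda^m/(1+\lambda^2)\vert < \delta$ for every $\lambda \in \R$, whence $\bigl\vert \int_\R \tfrac{\lambda^m}{1+\lambda^2}\,\rd\sigma - \int_\R h\,\rd\sigma \bigr\vert \leqslant \delta\,\vert\sigma\vert(\R)$. It then remains to verify $\int_\R h\,\rd\sigma = 0$, and this is exactly where the bounded-class hypothesis enters. Expanding $h(\lambda) = \sum_k a_k \lambda^k$, the estimate $\Vert A^n g\Vert_\cH \leqslant B_g^n$ yields, via Cauchy--Schwarz against $\vert\nu\vert$, the moment bound $\int_\R \vert\lambda\vert^k\,\rd\vert\sigma\vert \leqslant B_g^k (1+B_g^2)^{1/2}\big(\Vert v\Vert_\cH^2 + \Vert Av\Vert_\cH^2\big)^{1/2}$. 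Since $h$ is entire, $\sum_k \vert a_k\vert B_g^k < \infty$, so its Taylor series converges to $h$ in $L^1(\vert\sigma\vert)$; integrating term by term against $\sigma$ (each monomial integrates to $0$) gives $\int_\R h\,\rd\sigma = 0$. Letting $\delta \to 0$ gives $\int_\R \lambda^m\,\rd\nu = 0$, which completes both inclusions.

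The main obstacle is precisely this passage from ``$\sigma$ kills all polynomials'' to ``$\sigma$ kills $\lambda^m/(1+\lambda^2)$'', since one cannot simply divide the moment identities by $1+\lambda^2$ within the class of polynomials. The bounded-class assumption is what reconciles the two approximation steps: it forces the moments of $\vert\sigma\vert$ to grow no faster than $B_g^k$, which is exactly the growth needed for the $L^1(\vert\sigma\vert)$-convergence of the Taylor expansion of the entire approximant supplied by Carleman's Theorem. Beyond this, the routine care lies in establishing the finiteness of $\sigma$, the variation bound on $\nu$, and in tracking the conjugations that turn $\scalar{v}{A^k g}$ and $\scalar{Av}{A^k g}$ into moments of $\nu$.
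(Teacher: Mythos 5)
Your proof is correct, and it is worth comparing with the paper's because, while both arguments rest on the same two pillars --- Carleman's Theorem (Theorem~\ref{th:Carleman}) supplying an entire approximant, and the bounded-class bound $\Vert A^n g\Vert_\cH \leqslant B_g^n$ supplying the geometric moment growth that lets one integrate the entire function's Taylor series term by term --- the routes through them differ substantively. The paper approximates the single function $(1+\lambda^2)^{-1}$, truncates the Maclaurin series of the approximant into polynomials, and runs a two-parameter limit (tolerance $\varepsilon$, truncation order $n$) with $L^2\big(\R,\mu_g^{(A)}\big)$ domination and the Cauchy--Schwarz estimate of \cite[Lemma~4.8]{schmu_unbdd_sa}; this analytic step only yields $\langle A^k g, v\rangle = \langle A^k g, Av\rangle = 0$ for $k = 0,1$, and all higher powers are then recovered by an algebraic bootstrap that plays the relations $\langle A^k g, v\rangle_V = 0$ against the identities already obtained. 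Your reformulation via the finite complex measure $\sigma = (1+\lambda^2)\nu$, all of whose moments vanish by the $\perpV$ hypothesis, treats every exponent $m$ on an equal footing by applying Carleman to $\lambda^m/(1+\lambda^2)$, so the bootstrap disappears and a single dominated-convergence argument in $L^1(|\sigma|)$ replaces the polynomial-truncation/$L^2$ machinery; this is genuinely leaner. The one step you compress --- ``via Cauchy--Schwarz against $|\nu|$'' --- is exactly where the hypothesis $g \in \mathcal{D}^b(A)$ enters and should be written out: the set-wise bound $|\nu|(\Omega) \leqslant \mu_g^{(A)}(\Omega)^{1/2}\mu_v^{(A)}(\Omega)^{1/2}$ upgrades (by simple functions and the discrete Cauchy--Schwarz inequality, or by citing \cite[Lemma~4.8]{schmu_unbdd_sa}) to $\int |fh|\,\rd|\nu| \leqslant \big(\int |f|^2\,\rd\mu_g^{(A)}\big)^{1/2}\big(\int |h|^2\,\rd\mu_v^{(A)}\big)^{1/2}$, and the asymmetric split $f = |\lambda|^k(1+\lambda^2)^{1/2}$, $h = (1+\lambda^2)^{1/2}$ then gives precisely your moment bound (modulo the cosmetic point that $B_g$ controls $\Vert A^n g\Vert_\cH$ only for $n \geqslant 1$, so the $k=0$ constant also involves $\Vert g\Vert_\cH$; only the geometric growth in $k$ matters for summing the Taylor series at radius $B_g$).
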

\begin{proof}
Given any $u \in \K{A}{g}$ and any $v \in \K{A}{g}^{\perpV}$,
\[0 = \langle u,v \rangle_V= \int_{\R} p(\lambda)(1 + \lambda^2) \,\rd \bigl \langle E^{(A)}(\lambda)g,v\bigr\rangle\,,\]
where $u = \int_\R p(\lambda) \,\rd E^{(A)}g$ for a polynomial $p$ that corresponds to the vector $u$. Owing to the invariance of $\K{A}{g}$ under $A$, we also have that
\[0 = \langle A^k u,v \rangle_V = \langle A^k p(A)g,v \rangle_V=  \int_{\R} p(\lambda)(1 + \lambda^2)\lambda^k \, \rd \bigl \langle E^{(A)}(\lambda)g, v\bigr\rangle\,,\]
for any polynomial $p$ and for all $k \in \N_0$. We will construct a sequence of polynomials $(p_m)_{m\in\N}$ such that we have the limit
\begin{equation}\label{eq:star_scalar}
0 = \langle A^k p_m(A)g,v \rangle_V \xrightarrow{m \to \infty} \langle A^k g, v \rangle\,, \tag{*}
\end{equation}
which is tantamount to the convergence of the integrals
\begin{equation}\label{eq:star_integral}
\int_{\R} p_m(\lambda)(1 + \lambda^2)\lambda^k \, \rd \bigl\langle E^{(A)}(\lambda)g,v\bigr\rangle \xrightarrow{m \to \infty} \int_{\R}\lambda^k \, \rd \bigl\langle E^{(A)}(\lambda)g,v\bigr\rangle\,, \tag{**}
\end{equation}
which then implies that $A^k g \perp v$, for $k = 0,1$ and for any given $v \in \K{A}{g}^{\perpV}$. Then for all $k \in \N_0$ we will show that $\langle A^kg, v\rangle=0$. 

To achieve this, we invoke Carleman's Theorem (Theorem~\ref{th:Carleman}), noting that $(1 + \lambda^2)^{-1}$ is a continuous function on $\R$. We choose an $\varepsilon > 0$ and let the positive error function be $\varepsilon(\lambda) = \varepsilon$. Therefore there exists an entire function $h_\varepsilon : \C \to \C$ such that $\vert h_\varepsilon(\lambda) - (1 + \lambda^2)^{-1}\vert < \varepsilon$ for all $\lambda \in \R$. Let us define a polynomial sequence as follows
\[q_n^{(h_\varepsilon)}(\lambda) := \sum_{i = 0}^n \frac{h_\varepsilon^{(i)}(0)}{i!} \lambda^i\,,\quad n \in \N_0\,,\]
which is the $n$-th order truncation of the Maclaurin series for $h_\varepsilon(\lambda)$. As $h_\varepsilon$ is entire, the series has an infinite radius of convergence, so $q_n^{(h_\varepsilon)} \xrightarrow{n \to \infty} h_\varepsilon$ uniformly on compact subsets of $\C$. The function
\[f(\lambda) := \sum_{i = 0}^\infty \frac{\vert h_\varepsilon^{(i)}(0)\vert}{i!} \vert \lambda\vert^i\]
dominates both $h_\varepsilon$ and each $q_n^{(h_\varepsilon)}$, so that $2f(\lambda)(1+\lambda^2)\vert\lambda\vert^k$ dominates the function $\big\vert \big(q_n^{(h_\varepsilon)}(\lambda) - h_\varepsilon(\lambda)\big)(1 + \lambda^2)\lambda^k \big\vert$ for all $k \in \N_0$. (Note that while $f$ depends on $h_\varepsilon$, we leave out the dependence in its notation in order to avoid obfuscation.)

%
%
%
%
%
%
%
%
%
%
%
%
%
%
%

To show that $f(\lambda)$ actually exists for each $\lambda \in \R$ we consider the Cauchy estimate for $h_\varepsilon^{(i)}(0)$, namely
\[\vert h_\varepsilon^{(i)}(0) \vert \leqslant \frac{M_R i!}{R^i}\,,\]
where $R > 0$ and $M_R = \max_{z = R} \vert h_\varepsilon (z) \vert$. Then for fixed $\lambda \in \R$
\[\frac{\vert h_\varepsilon^{(i)}(0)\vert}{i!} \vert \lambda\vert^i \leqslant M_R  \frac{\vert \lambda \vert^i}{R^i}\,,\]
and thus the summation converges as we may choose $R > \vert \lambda \vert$ without restriction.

We claim that $f(\lambda)(1 + \lambda^2)\vert \lambda \vert^k \in L^2\big(\R,\mu_g^{(A)}\big)$ for all $k \in \N_0$. Indeed, consider $f^2(\lambda)$
\[\vert f(\lambda) \vert^2 = f^2(\lambda) = \sum_{i = 0}^\infty \frac{\vert h_\varepsilon^{(i)}(0)\vert}{i!} \vert \lambda\vert^i \sum_{j = 0}^\infty \frac{\vert h_\varepsilon^{(j)}(0)\vert}{j!} \vert \lambda\vert^j\,,\]
so that owing to the (absolute) convergence of both series at each $\lambda \in \R$ we can replace the double summation with the Cauchy product
\[f^2(\lambda) = \sum_{i = 0}^\infty c_i \vert \lambda \vert^i\,, \quad c_i = \sum_{l =0}^i \frac{\vert h_\varepsilon^{(l)}(0)\vert}{l!}\frac{\vert h_\varepsilon^{(i-l)}(0)\vert}{(i-l)!}\,.\]
Taking the Cauchy estimate for some $R > 0$ (to be chosen later), we see that
\[c_i \leqslant M_R^2 \sum_{l = 0}^i \frac{1}{R^i} = \frac{(i + 1) M_R^2}{R^i}\,,\]
and therefore
\[f^2(\lambda) \leqslant M_R^2 \sum_{i = 0}^\infty \frac{i+1}{R^i} \vert \lambda \vert^i\,.\]
Indeed we see that the right side converges for each $\lambda \in \R$ by choosing some $R$ large enough e.g., for fixed $\lambda$ one may choose $R > 4\vert \lambda \vert$ noting that $i+1 < 2^i$ for all $i \in \N$ and $2\vert \lambda \vert \geqslant \vert \lambda \vert$ for all $\lambda \in \R$.

Putting this into the spectral integral, we have by the Monotone Convergence Theorem and the H\"{o}lder inequality
\begin{align*}
\int_\R f^2(\lambda) (1 + & \lambda^2)^2  \lambda^{2k}  \,\rd \mu_g^{(A)}  \\ 
 & \leqslant M_R^2 \sum_{i = 0}^\infty \int_\R \frac{i+1}{R^i}\vert \lambda \vert^i (1 + \lambda^2)^2 \lambda^{2k} \, \rd \mu_g^{(A)} \\
& \leqslant  M_R^2 \sum_{i = 0}^\infty \frac{i+1}{R^i} \left( \int_\R \lambda^{2i} \, \rd \mu_g^{(A)} \right)^{\frac{1}{2}} \left( \int_\R (1 + \lambda^{2})^4 \lambda^{4k} \, \rd \mu_g^{(A)} \right)^{\frac{1}{2}}\\
 & \leqslant \tilde{\mathscr{C}}_k M_R^2 \sum_{i = 0}^\infty \frac{i+1}{R^i} B_g^{i}\,,
\end{align*}
where $\tilde{\mathscr{C}}_k = \big( \int_\R (1 + \lambda^{2})^4 \lambda^{4k} \, \rd \mu_g^{(A)} \big )^{\frac{1}{2}} < \infty$ for all $k \in \N_0$, so that the series on the right side of the last inequality converges for any choice of $R > 4\max\{1, B_g\}$. This proves that $f(\lambda)(1 + \lambda^2)\vert \lambda \vert^k \in L^2\big(\R,\mu_g^{(A)}\big)$ for all $k \in \N_0$.

Thus considering $k=0,1$
\begin{equation}\label{eq:integralA}
\begin{split}
\bigg\vert \int_{\R} \big(q^{(h_\varepsilon)}_n(\lambda) - & h_\varepsilon(\lambda)\big)(1 +  \lambda^2) \, \rd \bigl\langle E^{(A)}(\lambda)g, v\bigr\rangle \bigg\vert^2  \\ 
 & \leqslant \int_{\R} \big \vert q^{(h_\varepsilon)}_n(\lambda) - h_\varepsilon(\lambda)\big\vert^2 (1 + \lambda^2)^2 \, \rd \mu_g^{(A)} \int_{\R} \, \rd \mu_v^{(A)} \\
 & = \Vert v\Vert_\cH^2 \int_{\R} \big\vert q_n^{(h_\varepsilon)}(\lambda) - h_\varepsilon(\lambda)\big\vert^2 (1 + \lambda^2)^2 \, \rd \mu_g^{(A)}\,, 
\end{split}\tag{a}
\end{equation}
and
\begin{equation}\label{eq:integralB}
\begin{split}
\bigg\vert \int_{\R} \big(q_n^{(h_\varepsilon)}(\lambda) - & h_\varepsilon(\lambda)\big)  (1 + \lambda^2) \lambda \,  \rd \bigl \langle E^{(A)}(\lambda)g, v\bigr \rangle \bigg\vert^2  \\ 
 & \leqslant \int_{\R} \big\vert q_n^{(h_\varepsilon)}(\lambda) - h_\varepsilon(\lambda)\big\vert^2 (1 + \lambda^2)^2 \, \rd \mu_g^{(A)} \int_{\R} \lambda^2 \, \rd \mu_v^{(A)} \\
 & = \Vert Av\Vert_\cH^2 \int_{\R} \big\vert q_n^{(h_\varepsilon)}(\lambda) - h_\varepsilon(\lambda)\big\vert^2 (1 + \lambda^2)^2 \, \rd \mu_g^{(A)}\,,
\end{split}\tag{b}
\end{equation}
where we have used \cite[Lemma~4.8(ii)]{schmu_unbdd_sa}. Therefore, the convergence of $\big(q_n^{(h_\varepsilon)} - h_\varepsilon\big)(1+\lambda^2) \xrightarrow{n\to\infty} 0$ in $L^2\big(\R,\mu_g^{(A)}\big)$ implies convergence to zero of the left sides of the inequalities \eqref{eq:integralA} and \eqref{eq:integralB}.

As $\big\vert \big(q_n^{(h_\varepsilon)}(\lambda) - h_\varepsilon(\lambda)\big)(1 + \lambda^2) \big\vert \leqslant 2f(\lambda)(1+\lambda^2) \in L^2\big(\R,\mu_g^{(A)}\big)$ and $q_n^{(h_\varepsilon)}(\lambda)(1 + \lambda^2) \xrightarrow{n \to \infty} h_\varepsilon(\lambda)(1 + \lambda^2)$ point-wise, Lebesgue Dominated Convergence gives
\[ \int_{\R} \big\vert q_n^{(h_\varepsilon)}(\lambda) - h_\varepsilon(\lambda)\big\vert^2 (1 + \lambda^2)^2 \, \rd \mu_g^{(A)} \xrightarrow{n\to\infty} 0\,,\]
and therefore we have the quantities on the left sides of \eqref{eq:integralA} and \eqref{eq:integralB} converge to $0$ as $n \to \infty$.

For $k = 0,1$ there exists an $n_\varepsilon \in \N$ such that 
\begin{align*}
\bigg\vert \int_{\R} q_{n_\varepsilon}^{(h_\varepsilon)}(\lambda)(1 + \lambda^2) \lambda^k \, & \rd \bigl\langle E^{(A)}(\lambda)g,v\bigr\rangle - \\
 & \int_{\R} h_\varepsilon(\lambda)(1 + \lambda^2) \lambda^k \, \rd \bigl\langle E^{(A)}(\lambda)g,v\bigr\rangle \bigg\vert < \varepsilon\,.
\end{align*}
Therefore, for $k = 0,1$,
\begin{align*}
\bigg\vert \int_{\R} \bigg( & q_{n_\varepsilon}^{(h_\varepsilon)}(\lambda) - \frac{1}{1 + \lambda^2}\bigg) (1 + \lambda^2)  \lambda^k \, \rd \bigl\langle E^{(A)}(\lambda)g,v\bigr\rangle\bigg\vert \\
& \leqslant \left\vert \int_{\R} \big(q_{n_\varepsilon}^{(h_\varepsilon)}(\lambda) - h_\varepsilon(\lambda)\big)(1 + \lambda^2) \lambda^k \, \rd \bigl\langle E^{(A)}(\lambda)g,v\bigr\rangle \right\vert \\
 & \qquad\qquad + \left\vert \int_{\R}  \left( h_\varepsilon(\lambda) - \frac{1}{1 + \lambda^2} \right)(1 + \lambda^2) \lambda^k \, \rd \bigl\langle E^{(A)}(\lambda)g, v\bigr\rangle \right\vert \\
 & < \varepsilon + \int_{\R} \left\vert  h_\varepsilon(\lambda) - \frac{1}{1 + \lambda^2} \right\vert(1 + \lambda^2) \vert \lambda^k \vert \, \rd \bigl\langle E^{(A)}(\lambda)g,v\bigr\rangle \\
 & \leqslant \varepsilon + \bigg(\int_{\R} \bigg\vert  h_\varepsilon(\lambda) - \frac{1}{1 + \lambda^2} \bigg\vert^2 (1 + \lambda^2)^2  \, \rd \mu_g^{(A)} \bigg)^{\frac{1}{2}} \left(\int_{\R} \lambda^{2k} \, \rd \mu_v^{(A)}\right)^{\frac{1}{2}} \\
  & \leqslant \varepsilon + \mathscr{C}_k \varepsilon \Vert (\1 + A^2)g\Vert_\cH
\end{align*}
where $\mathscr{C}_0 = \Vert v\Vert_\cH$ and $\mathscr{C}_1 = \Vert Av\Vert_\cH$, and we have used \cite[Lemma~4.8(ii)]{schmu_unbdd_sa}.

We may set $\varepsilon = \frac{1}{m}$ for $m \in \N$, and extract a sequence $(p_m)_{m\in\N}$ with the polynomials $p_m = q_{n_{1/m}}^{(h_{1/m})}$ for all $m \in \N$. We note that in the above construction, the choice of $(p_m)_{m \in \N}$ is independent of the choice of vector $v \in \K{A}{g}^{\perpV}$, and therefore \eqref{eq:star_integral}, and consequently \eqref{eq:star_scalar}, are satisfied for $k = 0,1$ and for all $v \in \K{A}{g}^{\perpV}$. 

We now turn our attention to the case for which $k > 1$. The consequence of the limit in \eqref{eq:star_scalar} for $k=0,1$ is that for all $v \in \K{A}{g}^{\perpV}$
\begin{equation}\label{eq:star_scalark01}
\langle g,v\rangle = 0\,\quad \text{and}\quad 0 = \langle Ag, v\rangle = \langle g, Av\rangle\,.\tag{***}
\end{equation}
For any $v \in \K{A}{g}^{\perpV}$, we have
\[0 = \langle g,v \rangle_V \implies \langle g, v\rangle= - \langle Ag, Av\rangle = - \langle A^2g,v\rangle\,,\]
and
\[0 = \langle Ag,v\rangle_V \implies \langle Ag,v\rangle= - \langle A^2g, Av\rangle = - \langle A^3g,v\rangle\,,\]
so that $\langle A^2g, v\rangle = \langle A^3g, v\rangle = 0$ from \eqref{eq:star_scalark01}. Then,
\[0 = \langle A^2g,v\rangle_V \implies 0 = \langle A^2g, v\rangle= - \langle A^3g, Av\rangle = - \langle A^4g, v\rangle\,,\]
and
\[0 = \langle A^3g,v \rangle_V \implies 0 = \langle A^3g, v\rangle= - \langle A^4g, Av\rangle = - \langle A^5g, v\rangle\,.\]
Continuing in this way, we eventually get that $\langle A^kg, v\rangle = 0$ and $\langle A^kg, Av\rangle = 0$ for all $k \in \N_0$ and for all $v \in \K{A}{g}^{\perpV}$. 

Therefore, both $\K{A}{g}^{\perpV} \subset \K{A}{g}^\perp$ and also $A\big(\K{A}{g}^{\perpV}\big) \subset \K{A}{g}^\perp$.
\end{proof}

As such, for any vector $u \in \Kc{A}{g}$ such that $\Vert u\Vert_\cH = 1$ and any vector $v \in \K{A}{g}^{\perpV}$ such that $\Vert Av\Vert_\cH=1$, we have
\[\Vert u - Av\Vert_\cH^2 = 2\big(1 - \Re \langle u, Av\rangle\big) = 2\]
so that $\mathcal{S}(\K{A}{g},A\big(\K{A}{g}^{\perpV})\big) = \{\sqrt{2}\}$ and therefore, from Theorem~\ref{th:trivialKint_distances} we have that $\Kint{A}{g} = \{0\}$. This is the statement of the next theorem.

\begin{theorem}\label{th:gbdd_Ksolv_Kinttrivial}
Let $A:\cH \to \cH$ be a self-adjoint operator, and $g \in \cH$ be a bounded vector with respect to $A$. Then $\Kint{A}{g} = \{0\}$. Moreover, if $g \in \ran A$ then the inverse linear problem $Af = g$ has a solution $f \in \Kc{A}{g}^V$.
\end{theorem}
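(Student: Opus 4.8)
The plan is to read off the triviality of the Krylov intersection directly from the structural inclusion already supplied by Proposition~\ref{prop:bddvec_invariances}, and then to obtain Krylov solvability as a formal consequence of Theorem~\ref{th:KrylovInt}. Since $g$ is a bounded vector we have $g \in \mathcal{D}^b(A) \subset C^\infty(A)$, so all of the objects $\K{A}{g}$, $\Kc{A}{g}$, $\K{A}{g}^{\perpV}$, and $\Kint{A}{g}$ are well defined and the cited results apply.

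First I would record the crucial orthogonality. By Proposition~\ref{prop:bddvec_invariances} we have $A\big(\K{A}{g}^{\perpV}\big) \subset \K{A}{g}^\perp$; consequently, for every $u \in \K{A}{g}$ and every $v \in \K{A}{g}^{\perpV}$ one has $\langle u, Av\rangle = 0$. This exact (rather than merely approximate) orthogonality is precisely what makes the subsequent separation computation collapse to a single value.

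Next I would compute the range of separations $\mathcal{S}\big(\K{A}{g}, A(\K{A}{g}^{\perpV})\big)$ of Definition~\ref{def:subspace_dist2}. If $A\big(\K{A}{g}^{\perpV}\big) = \{0\}$ then there are no admissible unit vectors and $\mathcal{S}$ is empty, whence $0 \notin \mathcal{S}$ trivially. Otherwise, fix any $v \in \K{A}{g}^{\perpV}$ with $\Vert Av\Vert_\cH = 1$ and any $u \in \K{A}{g}$ with $\Vert u\Vert_\cH = 1$. Expanding and invoking the orthogonality of the previous step,
\[\Vert u - Av\Vert_\cH^2 = \Vert u\Vert_\cH^2 - 2\,\Re\langle u, Av\rangle + \Vert Av\Vert_\cH^2 = 1 - 0 + 1 = 2\,,\]
independently of the choice of $u$. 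Hence the infimum over such $u$ equals $\sqrt{2}$ for every admissible $v$, so $\mathcal{S}\big(\K{A}{g}, A(\K{A}{g}^{\perpV})\big) = \{\sqrt{2}\}$; in particular $0 \notin \mathcal{S}$. Theorem~\ref{th:trivialKint_distances} then yields $\Kint{A}{g} = \{0\}$, which is the first assertion. For the ``moreover'' part, assuming in addition $g \in \ran A$, I would simply invoke Theorem~\ref{th:KrylovInt}(i): triviality of $\Kint{A}{g}$ guarantees the existence of a solution $f$ of $Af = g$ with $f \in \Kc{A}{g}^V$.

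As for difficulty: essentially all the analytic effort has already been spent in establishing Proposition~\ref{prop:bddvec_invariances} (through Carleman's Theorem and the dominated-convergence estimates), so granting that, the present theorem is purely formal. The only point deserving a moment's care is that the separation is genuinely \emph{constant} in $u$, so that the infimum equals $\sqrt{2}$ rather than being merely bounded below; but this is immediate from the exactness of the orthogonality $\langle u, Av\rangle = 0$, and does not require any limiting argument.
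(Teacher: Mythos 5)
Your proposal is correct and follows essentially the same route as the paper: both read off the exact orthogonality $\langle u, Av\rangle = 0$ from Proposition~\ref{prop:bddvec_invariances}, compute $\Vert u - Av\Vert_\cH^2 = 2$ to conclude $\mathcal{S}\big(\K{A}{g}, A(\K{A}{g}^{\perpV})\big) = \{\sqrt{2}\}$, apply Theorem~\ref{th:trivialKint_distances} for triviality of the Krylov intersection, and invoke Theorem~\ref{th:KrylovInt}(i) for Krylov solvability. Your explicit handling of the degenerate case $A\big(\K{A}{g}^{\perpV}\big) = \{0\}$ is a minor point of extra care that the paper passes over silently, but it does not change the argument.
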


\begin{proof}
The Krylov solvability is a consequence of Theorem~\ref{th:KrylovInt}.
\end{proof}

\begin{remark}\label{rem:gbdd_Ksolv_Kinttrivial}
We comment on Theorem~\ref{th:gbdd_Ksolv_Kinttrivial} in light of the previous result \cite[Theorem~4.1]{CM-2019_ubddKrylov}, which for $A$ self-adjoint guarantees the existence of a solution $f$ to the linear inverse problem \eqref{eq:lininv} in $\Kc{A}{g}$. We note that here Theorem~\ref{th:gbdd_Ksolv_Kinttrivial} guarantees the existence of a Krylov solution to \eqref{eq:lininv} in the graph norm closure of $\K{A}{g}$ under the slightly more restrictive condition that $g$ be a bounded vector. As already stated in Remark~\ref{rem:graphnormclosure_advantage}, the existence of Krylov solutions in the graph norm closure of $\K{A}{g}$ is of practical significance. Moreover, the preceding Proposition~\ref{prop:bddvec_invariances} upon which Theorem~\ref{th:gbdd_Ksolv_Kinttrivial} is based, demonstrates important structural properties of $\Kc{A}{g}^V$ that are exploited further in Section~\ref{sec:selfadjoint_perturbations} to expand further Theorem~\ref{th:gbdd_Ksolv_Kinttrivial}, eventually culminating in Theorem~\ref{th:Kreduced}. Also, as noted in Remark~\ref{rem:comparison_unbddpaper}, the formulation of the Krylov intersection used in this study, particularly in proving Theorems~\ref{th:gbdd_Ksolv_Kinttrivial} and \ref{th:Kreduced}, avoids the drawbacks that arise in \cite{CM-2019_ubddKrylov}.

We further note that the previous analysis presented in \cite{CM-Nemi-unbdd-2019} established the existence of a Krylov solution to \eqref{eq:lininv} in $\Kc{A}{g}^V$ under the conditions that $A$ be self-adjoint and positive, and $g \in \mathcal{D}^{qa}(A)$. Yet, it did not contain information regarding the structure of $\K{A}{g}$ and how it interplays with the notion of Krylov solvability.
\end{remark}

\section{Perturbative Analysis for Unbounded Self-Adjoint Operators and Krylov Inner Approximations}\label{sec:selfadjoint_perturbations}
We are now left with the question: what happens to the Krylov subspace structure when one considers $g$ as a more general vector (e.g., analytic as opposed to bounded)? Specifically, we wish to know what happens to the Krylov intersection \eqref{eq:KrylovInt} for more general vectors, and we shall investigate this by using the bounded class of vectors in a perturbation analysis.

\begin{lemma}\label{lem:boundedcore}
Let $A:\cH \to \cH$ be a self-adjoint linear operator. Then the class of bounded vectors $\mathcal{D}^b(A)$ forms a core of $A$, and moreover given any $f \in \mathcal{D}(A)$ the sequence of bounded vectors $(\chi_{[-n,n]}(A)f)_{n \in \N}$ converges in graph norm to $f$.
\end{lemma}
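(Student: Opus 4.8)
The plan is to exhibit, for each $f \in \mathcal{D}(A)$, an explicit sequence of bounded vectors converging to $f$ in the graph norm; since $\mathcal{D}^b(A) \subset C^\infty(A) \subset \mathcal{D}(A)$ is a linear subspace (Definition~\ref{def:vector_classes}), such density in $\Vert\cdot\Vert_A$ is exactly the assertion that $\mathcal{D}^b(A)$ is a core of $A$. The natural candidates are the spectral truncations $f_n := \chi_{[-n,n]}(A) f$, and the entire argument rests on the functional calculus for the self-adjoint operator $A$ together with the finite positive measure $\mu_f^{(A)}(\Omega) := \langle E^{(A)}(\Omega)f, f\rangle$.

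First I would verify that each $f_n$ genuinely lies in $\mathcal{D}^b(A)$. Since $\lambda^k \chi_{[-n,n]}(\lambda)$ is bounded for every $k$, the functional calculus gives $f_n \in \mathcal{D}(A^k)$ for all $k$, hence $f_n \in C^\infty(A)$; moreover
\[
\Vert A^k f_n\Vert_\cH^2 = \int_{[-n,n]} \lambda^{2k} \, \rd \mu_f^{(A)}(\lambda) \leqslant n^{2k} \int_\R \rd \mu_f^{(A)} = n^{2k} \Vert f\Vert_\cH^2\,.
\]
Setting $B_{f_n} := n \max\{1, \Vert f\Vert_\cH\}$ and using $\Vert f\Vert_\cH \leqslant \Vert f\Vert_\cH^{\,k}$ for $k \geqslant 1$ in the case $\Vert f\Vert_\cH \geqslant 1$ (and $\Vert f\Vert_\cH \leqslant 1$ otherwise), one obtains $\Vert A^k f_n\Vert_\cH \leqslant B_{f_n}^{\,k}$ for all $k \in \N$, so that indeed $f_n \in \mathcal{D}^b(A)$.

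Next I would establish the graph-norm convergence. Splitting $\Vert f_n - f\Vert_A^2 = \Vert f_n - f\Vert_\cH^2 + \Vert A f_n - A f\Vert_\cH^2$ and evaluating both pieces through the spectral calculus (noting $A f_n = \chi_{[-n,n]}(A) A f$, as $\chi_{[-n,n]}(A)$ commutes with $A$ on $\mathcal{D}(A)$, and that $\vert \chi_{[-n,n]}(\lambda) - 1\vert^2$ is the indicator of $\{|\lambda| > n\}$) yields
\[
\Vert f_n - f\Vert_A^2 = \int_{|\lambda| > n} (1 + \lambda^2) \, \rd \mu_f^{(A)}(\lambda)\,.
\]
Because $f \in \mathcal{D}(A)$, the integrand is $\mu_f^{(A)}$-integrable over all of $\R$ (indeed $\int_\R (1+\lambda^2)\,\rd\mu_f^{(A)} = \Vert f\Vert_A^2 < \infty$), so this tail integral tends to $0$ as $n \to \infty$ by dominated convergence. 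Hence $\Vert f_n - f\Vert_A \to 0$, which simultaneously gives the claimed convergence and, by arbitrariness of $f \in \mathcal{D}(A)$, the core property.

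I do not anticipate a serious obstacle: the only mildly delicate point is repackaging the elementary estimate $\Vert A^k f_n\Vert_\cH \leqslant n^k \Vert f\Vert_\cH$ into the geometric-growth form $B_{f_n}^{\,k}$ demanded by the definition of $\mathcal{D}^b(A)$, which is handled by the $\max\{1,\Vert f\Vert_\cH\}$ device above. Everything else reduces to a direct application of the functional calculus and the finiteness of the spectral measure $\mu_f^{(A)}$.
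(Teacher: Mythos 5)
Your proposal is correct and follows essentially the same route as the paper's proof: the spectral truncations $f_n = \chi_{[-n,n]}(A)f$, the estimate $\Vert A^k f_n\Vert_\cH^2 \leqslant n^{2k}\Vert f\Vert_\cH^2$ via the functional calculus, and graph-norm convergence by dominated convergence applied to $\int_\R (1+\lambda^2)\vert 1-\chi_{[-n,n]}(\lambda)\vert^2\,\rd\mu_f^{(A)}$. Your only addition is the explicit repackaging of $n^k\Vert f\Vert_\cH$ into the geometric form $B_{f_n}^k$ via $B_{f_n} = n\max\{1,\Vert f\Vert_\cH\}$, a small point of care that the paper's proof leaves implicit.
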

\begin{proof}
Let $f \in \mathcal{D}(A)$, and consider the sequence of vectors $(\chi_{[-n,n]}(A)f)_{n \in \N}$. We clearly know that each of these vectors $f_n := \chi_{[-n,n]}(A)f$ is a bounded vector as
\[\Vert A^k f_n\Vert_\cH^2 = \int_{\R} \lambda^{2k} \chi_{[-n,n]}(\lambda) \, \rd \mu_f^{(A)} \leqslant n^{2k} \Vert f\Vert_\cH^2\,.\]
We also show that $f_n \xrightarrow{\Vert\cdot\Vert_{A}} f$ as $n \to \infty$. Indeed,
\[\Vert f - f_n\Vert_{A}^2 = \int_{\R} (1 + \lambda^2)\vert 1 - \chi_{[-n,n]}(\lambda) \vert^2 \, \rd \mu_f^{(A)} \xrightarrow{n \to \infty} 0\,,\]
from the Lebesgue Dominated Convergence theorem.
\end{proof}

\begin{lemma}\label{lem:p(A)gn_to_p(A)g}
Let $A:\cH\to\cH$ be a self-adjoint operator on Hilbert space $\cH$, let $f \in C^\infty(A)$, and consider the sequence of vectors $(f_n)_{n \in \N}$ defined by $f_n = \chi_{[-n,n]}(A)f$ for all $n \in \N$. Then for any polynomial $p$, we have that $p(A)f_n \xrightarrow[n \to \infty]{\Vert \cdot\Vert_{A}} p(A)f$. Moreover, $\Vert p(A)f_n\Vert_\cH \leqslant \Vert p(A) f\Vert_\cH$ and $\Vert p(A)f_n\Vert_{A} \leqslant \Vert p(A)f\Vert_{A}$ for all $n \in \N$.
\end{lemma}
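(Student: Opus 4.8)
The plan is to reduce every quantity in the statement to a scalar spectral integral against the measure $\mu_f^{(A)}(\cdot) = \langle E^{(A)}(\cdot)f, f\rangle$ and then read off all three assertions from elementary monotonicity and tail-vanishing properties of that integral. The pivotal observation is that $\chi_{[-n,n]}$ and $p$ are Borel functions with $\chi_{[-n,n]}^2 = \chi_{[-n,n]}$, and that $f_n = \chi_{[-n,n]}(A)f$ is a \emph{bounded} vector (Lemma~\ref{lem:boundedcore}), so that $p(A)f_n$ is well defined and the multiplicativity of the functional calculus yields
\[
p(A)f_n \;=\; p(A)\chi_{[-n,n]}(A)f \;=\; \bigl(p\cdot\chi_{[-n,n]}\bigr)(A)f\,.
\]
First I would record the integrability that the hypothesis $f\in C^\infty(A)$ provides: all polynomial moments $\int_\R \lambda^{2k}\,\rd\mu_f^{(A)} = \Vert A^k f\Vert_\cH^2$ are finite, so for the polynomial $p$ the function $(1+\lambda^2)^{1/2}\,p(\lambda)$ lies in $L^2\big(\R,\mu_f^{(A)}\big)$; in particular $p(A)f$ and every $p(A)f_n$ lie in $\dom{A}$ and all graph norms below are finite.

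Next I would compute the graph norms explicitly. Using the identity above together with the functional calculus and $\chi_{[-n,n]}^2=\chi_{[-n,n]}$,
\[
\Vert p(A)f_n\Vert_A^2 \;=\; \int_{\R} (1+\lambda^2)\,\vert p(\lambda)\vert^2\,\chi_{[-n,n]}(\lambda)\,\rd\mu_f^{(A)} \;=\; \int_{[-n,n]} (1+\lambda^2)\,\vert p(\lambda)\vert^2\,\rd\mu_f^{(A)}\,,
\]
whereas $\Vert p(A)f\Vert_A^2$ is the same integral taken over all of $\R$. The two norm inequalities are then immediate: restricting the domain of integration from $\R$ to $[-n,n]$ cannot increase the integral of the nonnegative integrand $(1+\lambda^2)\vert p\vert^2$, which gives $\Vert p(A)f_n\Vert_A \leqslant \Vert p(A)f\Vert_A$; deleting the factor $(1+\lambda^2)$ and repeating the same argument gives $\Vert p(A)f_n\Vert_\cH \leqslant \Vert p(A)f\Vert_\cH$.

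Finally, for the convergence I would write, using $p(A)f - p(A)f_n = \bigl(p\cdot(1-\chi_{[-n,n]})\bigr)(A)f$ and $(1-\chi_{[-n,n]})^2 = 1-\chi_{[-n,n]}$,
\[
\Vert p(A)f - p(A)f_n\Vert_A^2 \;=\; \int_{\R\setminus[-n,n]} (1+\lambda^2)\,\vert p(\lambda)\vert^2\,\rd\mu_f^{(A)}\,,
\]
which is the tail of a convergent integral and hence tends to $0$ as $n\to\infty$ (equivalently, by dominated convergence with dominating function $(1+\lambda^2)\vert p\vert^2 \in L^1\big(\R,\mu_f^{(A)}\big)$, since $1-\chi_{[-n,n]}\to 0$ pointwise). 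I expect no genuine obstacle: the entire argument is functional-calculus bookkeeping, and the only point requiring care is the integrability of $(1+\lambda^2)\vert p(\lambda)\vert^2$ against $\mu_f^{(A)}$ — which is precisely what $f\in C^\infty(A)$ supplies — together with the mild check that the multiplicativity identity is legitimate here because $f_n$ is a bounded vector.
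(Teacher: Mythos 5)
Your proof is correct and follows essentially the same route as the paper: both reduce everything to the spectral integral $\int_\R (1+\lambda^2)\vert p(\lambda)\vert^2\vert 1-\chi_{[-n,n]}(\lambda)\vert^2\,\rd\mu_f^{(A)}$, obtain the convergence via dominated convergence (equivalently, tail-vanishing of the convergent integral supplied by $f\in C^\infty(A)$), and read off the norm inequalities from $\chi_{[-n,n]}\leqslant 1$. Your version merely spells out the multiplicativity identity $p(A)\chi_{[-n,n]}(A)f=(p\cdot\chi_{[-n,n]})(A)f$ and the integrability check more explicitly than the paper does.
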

\begin{proof}
\[\Vert p(A)(f - f_n)\Vert_{A}^2 = \int_{\R} (1 + \lambda^2) \vert p(\lambda) \vert^2 \vert 1 - \chi_{[-n,n]}(\lambda) \vert ^2 \, \rd \mu_f^{(A)}\,.\]
The integral vanishes as $n \to \infty$ due to the Lebesgue Dominated Convergence Theorem. Indeed, $(1 + \lambda^2) \vert p(\lambda) \vert^2$ is a suitable dominating function and the integrand vanishes point-wise. The final inequality is obvious from spectral integrals and the relation $\chi_{[-n,n]}(\lambda) \leqslant 1$ for all $\lambda \in \R$.
\end{proof}

%
%
%

Given $A$ a self-adjoint operator on a Hilbert space $\cH$ and $g \in C^\infty(A)$ we define the following subspace
\begin{equation}\label{eq:frakK}
\mathfrak{L}(A,g) := \bigl\{h(A)g \,|\, h \in L^2\big(\R,\mu_g^{(A)}\big)\bigr\}\,.
\end{equation}
It is obvious that $\Kc{A}{g} \subset \mathfrak{L}(A,g)$, and clearly $\mathfrak{L}(A,g)$ is a closed subspace of $\cH$. Indeed, consider $(v_n)_{n \in \N} \subset \mathfrak{L}(A,g)$ a convergent sequence in $\cH$, i.e., $v_n \xrightarrow{n \to \infty} v$, and so there exists for each $n \in \N$ some $h_n \in L^2\big(\R,\mu_g^{(A)}\big)$ such that $v_n = h_n(A)g$. Using the Cauchyness of the sequence $(v_n)_{n \in \N}$ and the functional calculus together show that $(h_n)_{n \in \N}$ is a Cauchy sequence in $L^2\big(\R,\mu_g^{(A)}\big)$, and therefore $h_n \xrightarrow{L^2} h \in L^2\big(\R,\mu_g^{(A)}\big)$ as $n \to \infty$. This implies that $v = h(A)g$ and therefore $v \in \mathfrak{L}(A,g)$. 

In a similar way we also show that the subspace $\mathfrak{L}(A,g) \cap \mathcal{D}(A) \subset V$ is closed in $V$. Indeed, consider $(v_n)_{n \in \N} \subset \mathfrak{L}(A,g) \cap \mathcal{D}(A)$ a convergent sequence in $V$ so that $v_n \xrightarrow{\Vert \cdot\Vert_{A}} v \in V$ as $n \to \infty$. For each $n \in \N$ there exists $h_n \in L^2\big(\R,\mu_g^{(A)}\big)$ such that $v_n = h_n(A)g$, and so $h_n(A)g \xrightarrow{n \to\infty} v$ in $V$. From the convergence in $V$ we have that $h_n(A)g \xrightarrow{n\to\infty} v$ and $Ah_n(A)g \xrightarrow{n\to\infty} Av$ in $\cH$. The closedness of $\mathfrak{L}(A,g)$ in $\cH$ implies that $v \in \mathfrak{L}(A,g)$, and so there exists $h \in L^2\big(\R,\mu_g^{(A)}\big)$ such that $h(A)g = v$. Therefore, $v \in \mathfrak{L}(A,g) \cap \mathcal{D}(A)$.

\begin{proposition}\label{prop:L2isomorphism_frakL}
Let $A$ be a self-adjoint operator on a Hilbert space $\cH$ and let $g \in C^\infty(A)$. Then
\begin{itemize}
	\item[(i)] $\mathfrak{L}(A,g) \cong L^2\big(\R,\mu_g^{(A)}\big)$ in $\cH$,
	\item[(ii)] $\mathfrak{L}(A,g) \cap \mathcal{D}(A) \cong L^2\big(\R,(1+\lambda^2)\mu_g^{(A)}\big)$ in $V$. 
\end{itemize}
\end{proposition}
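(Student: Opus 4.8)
The plan is to realize, in both parts, the single map $\Phi\colon h\mapsto h(A)g$ furnished by the functional calculus as the claimed isometric isomorphism, changing only the source space and the target norm when passing from (i) to (ii). This is in the same spirit as the argument already sketched for $\Kc{A}{g}$ in Remark~\ref{rem:KisomorphismL2}, now applied to the possibly larger space $\mathfrak{L}(A,g)$ and, in part (ii), measured in the graph norm. For (i), I would first observe that for $h\in L^2(\R,\mu_g^{(A)})$ the vector $h(A)g$ is well defined, since the domain condition $g\in\dom{h(A)}$ reads precisely $\int_\R|h(\lambda)|^2\,\rd\mu_g^{(A)}<\infty$. The functional calculus then supplies the isometry identity
\[\Vert h(A)g\Vert_\cH^2 \;=\; \langle |h|^2(A)g,\, g\rangle \;=\; \int_\R |h(\lambda)|^2 \,\rd \mu_g^{(A)} \;=\; \Vert h\Vert_{L^2(\R,\mu_g^{(A)})}^2\,,\]
so that $\Phi$ is isometric (hence injective on $\mu_g^{(A)}$-equivalence classes), while surjectivity onto $\mathfrak{L}(A,g)$ is immediate from the definition \eqref{eq:frakK}. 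This delivers the isometric isomorphism of (i).

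For (ii), I would restrict $\Phi$ and identify the preimage of $\dom{A}$. Writing $v=h(A)g$, the functional calculus gives $\Vert Av\Vert_\cH^2=\int_\R\lambda^2|h(\lambda)|^2\,\rd\mu_g^{(A)}$, whence $v\in\dom{A}$ if and only if this integral is finite; together with $h\in L^2(\R,\mu_g^{(A)})$ this is exactly the condition $h\in L^2(\R,(1+\lambda^2)\mu_g^{(A)})$. Conversely, every such $h$ already lies in $L^2(\R,\mu_g^{(A)})$ (as $1+\lambda^2\geqslant 1$) and produces a $v\in\mathfrak{L}(A,g)\cap\dom{A}$, so $\Phi$ restricts to a bijection between $L^2(\R,(1+\lambda^2)\mu_g^{(A)})$ and $\mathfrak{L}(A,g)\cap\dom{A}$. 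The graph-norm computation
\[\Vert v\Vert_A^2 \;=\; \Vert v\Vert_\cH^2 + \Vert Av\Vert_\cH^2 \;=\; \int_\R (1+\lambda^2)|h(\lambda)|^2 \,\rd \mu_g^{(A)} \;=\; \Vert h\Vert_{L^2(\R,(1+\lambda^2)\mu_g^{(A)})}^2\]
then shows this restriction is isometric for the $V$-norm, giving (ii).

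The argument is in essence bookkeeping with the functional calculus, and I do not expect a genuine obstacle; the only points demanding care are the well-definedness of $h(A)g$ (the membership $h\in L^2(\R,\mu_g^{(A)})$ being \emph{exactly} the domain condition $g\in\dom{h(A)}$) and the equivalence of the two integrability conditions that pins down $\mathfrak{L}(A,g)\cap\dom{A}$ in part (ii). The closedness of $\mathfrak{L}(A,g)$ in $\cH$ and of $\mathfrak{L}(A,g)\cap\dom{A}$ in $V$, already verified in the discussion preceding the proposition, guarantees that both images are complete, so that the symbol ``$\cong$'' is an isomorphism between Hilbert spaces; I would invoke those facts rather than reprove them.
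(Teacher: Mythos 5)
Your proposal is correct and follows essentially the same route as the paper: both realize the isometric isomorphism via the functional-calculus map $h \leftrightarrow h(A)g$, use the spectral identities $\Vert h(A)g\Vert_\cH^2 = \int_\R |h|^2\,\rd\mu_g^{(A)}$ and $\Vert h(A)g\Vert_A^2 = \int_\R (1+\lambda^2)|h|^2\,\rd\mu_g^{(A)}$, and identify $\mathfrak{L}(A,g)\cap\dom{A}$ with $L^2\big(\R,(1+\lambda^2)\mu_g^{(A)}\big)$ by the domain characterization $\int_\R \lambda^2|h|^2\,\rd\mu_g^{(A)}<\infty$. Your write-up is, if anything, slightly more explicit than the paper's on the well-definedness and bijectivity points, which the paper dispatches with ``clearly.''
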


\begin{proof}
We begin by proving part (i). Let $J:\mathfrak{L}(A,g) \to L^2\big(\R,\mu_g^{(A)}\big)$ be the linear map $h(A)g \mapsto h(\lambda)$. Clearly $J$ is bijective, and moreover $\Vert h(A)g\Vert_\cH^2 = \int_\R \vert h(\lambda) \vert^2 \, \rd \mu_g^{(A)} = \Vert h\Vert_{L^2}^2$. Therefore $J$ is an isometric isomorphism between $\mathfrak{L}(A,g)$ and $L^2\big(\R,\mu_g^{(A)}\big)$.

We now prove part (ii). Let $\tilde{J}: \mathfrak{L}(A,g) \cap \mathcal{D}(A) \subset V \to L^2\big(\R,(1+\lambda^2)\mu_g^{(A)}\big)$ be the linear map $h(A)g \mapsto h(\lambda)$. We first note that $\tilde{J}$ is a surjection. Indeed, let $h \in L^2\big(\R,(1+\lambda^2)\mu_g^{(A)}\big)$, then $h(A)g \in \mathfrak{L}(A,g)$ and $Ah(A)g \in \mathfrak{L}(A,g)$ by the functional calculus, and so $h(A)g \in \mathcal{D}(A)$. We also see that $\Vert h(A)g\Vert_{A}^2 = \int_\R \vert h(\lambda) \vert^2 (1+\lambda^2)\, \rd \mu_g^{(A)} = \Vert h\Vert_{L^2}^2$. So, $\tilde{J}$ is a surjective linear isometry, and therefore the spaces $\mathfrak{L}(A,g)\cap\mathcal{D}(A)$ and $L^2\big(\R,(1+\lambda^2)\mu_g^{(A)}\big)$ are isometrically isomorphic.
\end{proof}
%

\begin{remark}\label{rem:HMP_iffKcoreKapprox}
According to Theorem~\ref{th:Hamburger_determinancy}, $\mathfrak{L}(A,g) = \Kc{A}{g}$ if the Hamburger moment problem generated by \eqref{eq:Lmu} and \eqref{eq:Lmu_s} is determinate for self-adjoint $A$ and $g \in C^\infty(A)$. Moreover, from Theorem~\ref{th:Hamburger_determinancy} and Proposition~\ref{prop:Kcore_Hamburgerderteminate}, if the Hamburger moment problem is determinate, then $\mathfrak{L}(A,g) \cap \mathcal{D}(A) = \Kc{A}{g} \cap \mathcal{D}(A) = \Kc{A}{g}^V$, i.e., the Krylov core condition is satisfied. Together with Proposition~\ref{prop:L2isomorphism_frakL}, we see that when the Hamburger moment problem is determinate, there exists an isometric isomorphism $\Kc{A}{g}^V \cong L^2\big(\R,(1+\lambda^2)\mu_g^{(A)}\big)$ in $V$.

Conversely we note that, for self-adjoint $A$ and $g \in C^\infty(A)$, if $\Kc{A}{g} = \mathfrak{L}(A,g)$ \emph{and} the Krylov core condition $\Kc{A}{g}^V = \Kc{A}{g} \cap \mathcal{D}(A)$ is satisfied, then the Hamburger moment problem is determinate. Indeed, from Proposition~\ref{prop:L2isomorphism_frakL} and the condition $\Kc{A}{g} = \mathfrak{L}(A,g)$,  it is immediate that 
\[\Kc{A}{g}^V \cong L^2\big(\R, (1+\lambda^2)\mu_g^{(A)} \big)\]
in $V$. Therefore given any $f \in L^2\big(\R, (1+\lambda^2)\mu_g^{(A)} \big)$ we have that $f(A)g$ is approximable by a sequence of polynomials $(p_n)_{n \in \N}$ in $V$. By the functional calculus
\[\int_\R \vert p_n(\lambda) - f(\lambda) \vert^2 (1+\lambda^2) \, \rd \mu_g^{(A)} = \Vert p_n(A)g - f(A)g \Vert_A^2 \xrightarrow{n \to \infty} 0\,,\]
so that $p_n \xrightarrow{n \to \infty} f$ in $L^2\big(\R, (1+\lambda^2)\mu_g^{(A)} \big)$. Therefore the polynomials on $\R$ are dense in $L^2\big(\R, (1+\lambda^2)\mu_g^{(A)} \big)$, and so by Corollary~\ref{cor:L2density_Hamburgerdeterminacy} the Hamburger moment problem is determinate.

Therefore, we have revealed that the condition $\Kc{A}{g} = \mathfrak{L}(A,g)$ coupled with the Krylov core condition are tantamount to the determinacy of the Hamburger moment problem for the Hamburger moment functional generated by the measure $\mu_g^{(A)}$ in \eqref{eq:Lmu}. This further underscores the intimate connection between the structural properties of the Krylov subspace $\K{A}{g}$ and the Hamburger moment problem in the self-adjoint setting.
\end{remark}

We now demonstrate appropriate limiting sequences for approximating the subspaces $\mathfrak{L}(A,g)$ and $\Kc{A}{g}$ in terms of the weak-gap metric $\hat{d}_w$ from \cite{CM-krylov-perturbation-2021} when we have that $\cH$ is separable.

\begin{proposition}\label{prop:KntoK}
Let $A$ be self-adjoint on a separable Hilbert space $\cH$, and let $g \in C^\infty(A)$. Then there exists a sequence of bounded vectors $(g_n)_{n \in \N}$ such that $g_n \xrightarrow[n \to \infty]{\Vert\cdot\Vert_{A}} g$, and moreover
	\begin{itemize}
		\item[(i)] $\Kc{A}{g_n} \subset \Kc{A}{g_{n+1}} \subset \mathfrak{L}(A,g)$ for all $n \in \N$,
		\item[(ii)] and $\Kc{A}{g_n} \xrightarrow{\hat{d}_w} \mathfrak{L}(A,g)$, where $\hat{d}_w$ is the weak-gap metric based on the weak topology of $B_\cH$.
	\end{itemize}
\end{proposition}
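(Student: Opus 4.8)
The plan is to take the explicit regularising sequence $g_n := \chi_{[-n,n]}(A)g$ furnished by Lemma~\ref{lem:boundedcore}; this immediately supplies bounded vectors with $g_n \xrightarrow{\Vert\cdot\Vert_{A}} g$, so only the two structural claims (i) and (ii) demand work. The organising device throughout will be the isometric isomorphism $J:\mathfrak{L}(A,g) \xrightarrow{\cong} L^2\big(\R,\mu_g^{(A)}\big)$, $h(A)g \mapsto h$, from Proposition~\ref{prop:L2isomorphism_frakL}(i), which transports all the subspaces in question into concrete $L^2$ subspaces where the containments become transparent.

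First I would identify $\Kc{A}{g_n}$ under $J$. Since $A^k g_n = (\lambda^k \chi_{[-n,n]})(A)g$, the image $J(\K{A}{g_n})$ is the span of $\{\lambda^k \chi_{[-n,n]} : k \in \N_0\}$, i.e.\ polynomials multiplied by $\chi_{[-n,n]}$; all such functions lie in $L^2\big(\R,\mu_g^{(A)}\big)$ because $\mu_g^{(A)}$ is finite and they are bounded on $[-n,n]$, which already gives $\Kc{A}{g_n}\subset\mathfrak{L}(A,g)$. I then claim that the $L^2$-closure of this span is exactly $\{f \in L^2(\R,\mu_g^{(A)}) : f = 0 \text{ a.e.\ on } \R\setminus[-n,n]\}$. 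The inclusion ``$\subset$'' is clear since the functions vanishing off $[-n,n]$ form a closed subspace; the reverse inclusion is where the one genuinely analytic input enters---on the compact interval $[-n,n]$ the finite measure $\mu_g^{(A)}$ makes continuous functions dense in $L^2$, and Weierstrass makes polynomials uniformly (hence $L^2$-) dense among the continuous functions, so any $f$ supported in $[-n,n]$ is approximable by $p\,\chi_{[-n,n]}$. With this identification the nesting $\Kc{A}{g_n}\subset\Kc{A}{g_{n+1}}$ becomes the obvious inclusion of $L^2$-functions supported in $[-n,n]$ into those supported in $[-(n+1),n+1]$, completing (i).

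For (ii), the sequence $M_n := \Kc{A}{g_n}$ is a nested increasing family of closed subspaces whose unit balls $B_{M_n}$ are weakly closed subsets of $B_\cH$, so Lemma~\ref{lem:nestedincreasingsets_convergence} guarantees $B_{M_n} \xrightarrow{\hat{d}_w} C$ for some weakly closed $C \subset B_\cH$, and Theorem~\ref{th:fundamentalproperties_dwhat}(iii) identifies $C$ as the set of weak limits of sequences $u_n \in B_{M_n}$. It then remains to prove $C = B_{\mathfrak{L}(A,g)}$. The inclusion $C \subset B_{\mathfrak{L}(A,g)}$ follows since every $B_{M_n}\subset B_{\mathfrak{L}(A,g)}$ and the latter is weakly closed. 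For the reverse inclusion I would take any $u = h(A)g$ with $\Vert u\Vert_\cH \leqslant 1$ and set $u_n := (h\,\chi_{[-n,n]})(A)g$; each $u_n$ lies in $M_n$ with $\Vert u_n\Vert_\cH \leqslant \Vert u\Vert_\cH \leqslant 1$, and dominated convergence applied to $\int_{\vert\lambda\vert > n}\vert h\vert^2\,\rd\mu_g^{(A)}$ yields $u_n \to u$ strongly, hence weakly, so $u \in C$. Invoking the identification $\hat{d}_w(M_n, \mathfrak{L}(A,g)) \equiv \hat{d}_w(B_{M_n}, B_{\mathfrak{L}(A,g)})$ from \eqref{eq:dwhat_subspace} then closes the argument.

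I expect the main obstacle to be the precise identification of the closure $\Kc{A}{g_n}$ in the second paragraph: one must verify that the span of truncated monomials closes up to \emph{all} $L^2$-functions supported on $[-n,n]$ and to no more, which rests on polynomial density on the compact interval against the finite measure $\mu_g^{(A)}$. Once that identification is secured, the nesting in (i) and the strong-approximation step in (ii) are routine, and the weak-gap convergence is then delivered by the abstract machinery of Section~\ref{sec:review_struct_perturb}.
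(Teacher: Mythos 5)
Your proof is correct, and its overall skeleton coincides with the paper's: the same regularising sequence $g_n = \chi_{[-n,n]}(A)g$ from Lemma~\ref{lem:boundedcore}, and for part (ii) the same combination of Lemma~\ref{lem:nestedincreasingsets_convergence}, the characterisation of the limit set from Theorem~\ref{th:fundamentalproperties_dwhat}(iii), and the strong (hence weak) approximation $u_n = (h\,\chi_{[-n,n]})(A)g \to u$ by truncation. Where you genuinely diverge is in the key identification underlying part (i). The paper invokes Proposition~\ref{prop:quasianalytic_determinate}: each $g_n$ is a bounded vector, so the associated Hamburger moment problem is determinate, whence $\Kc{A}{g_n} = \mathfrak{L}(A,g_n) \cong L^2\big(\R,\mu_{g_n}^{(A)}\big)$; the nesting and the inclusion into $\mathfrak{L}(A,g)$ are then obtained by translating between the measures $\mu_{g_n}^{(A)}$, $\mu_{g_{n+1}}^{(A)}$, $\mu_g^{(A)}$ via identities such as $h(A)g_n = \big(h\,\chi_{[-n,n]}\big)(A)g_{n+1}$. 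You instead work inside the single ambient space $L^2\big(\R,\mu_g^{(A)}\big)$ and prove directly---via density of continuous functions in $L^2$ of a finite Borel measure on a compact interval together with Weierstrass approximation---that $\Kc{A}{g_n}$ corresponds under $J$ exactly to $\big\{f \in L^2\big(\R,\mu_g^{(A)}\big) : f = 0 \ \mu_g^{(A)}\text{-a.e.\ on } \R\setminus[-n,n]\big\}$. This is a more elementary and self-contained route: in effect you re-derive the determinacy-type conclusion in the compactly supported case rather than citing Carleman's condition, and it makes both the nesting in (i) and the membership $u_n \in \Kc{A}{g_n}$ needed in (ii) completely transparent. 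What the paper's route buys is brevity and thematic coherence (Proposition~\ref{prop:quasianalytic_determinate} is already established and is the conceptual backbone of Section~\ref{sec:selfadjoint_struct}); what yours buys is independence from the moment-problem machinery and a sharper, concrete description of each $\Kc{A}{g_n}$ as the subspace of functions supported in $[-n,n]$, which is slightly more explicit than what the paper's proof records.
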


\begin{proof}
We begin by proving part (i). Let 
\[g_n := \chi_{[-n,n]}(A)g = \int_{\R} \chi_{[-n,n]}(\lambda) \, \rd E^{(A)}(\lambda)g\,,\] where $E^{(A)}$ is the unique projection valued spectral measure for the self-adjoint operator $A$. By Lemma~\ref{lem:boundedcore} we know that $g_n \xrightarrow{\Vert\cdot\Vert_{A}} g$.

We show that $\Kc{A}{g_n} \subset \Kc{A}{g_{n + 1}}$, and $\Kc{A}{g_n} \subset \mathfrak{L}(A,g)$ for all $n \in \N$. As $g_n \in \mathcal{D}^b(A)$, $\Kc{A}{g_n} \cong L^2\big(\R,\mu_{g_n}^{(A)}\big)$ for all $n \in \N_0$, and given any $h \in L^2\big(\R,\mu_{g_{n}}^{(A)}\big)$ 
\begin{align*}
\int_{\R} \vert h(\lambda) \vert^2 \, \rd \mu_{g_{n}}^{(A)} & = \int_{\R} \vert h(\lambda) \vert^2 \chi_{[-n,n]}(\lambda) \, \rd \mu_g^{(A)} \\
 & = \int_{\R} \vert h(\lambda) \vert^2 \chi_{[-n,n]}(\lambda) \chi_{[-n - 1,n + 1]}(\lambda)\, \rd \mu_g^{(A)} \\
 & = \int_{\R} \vert h(\lambda) \vert^2 \chi_{[-n,n]}(\lambda)\, \rd \mu_{g_{n + 1}}^{(A)}
\end{align*}
so that $h(\lambda)\chi_{[-n,n]}(\lambda) \in L^2\big(\R,\mu_{g_{n + 1}}^{(A)}\big)$. We see that 
\begin{align*}
\Kc{A}{g_{n+1}} \ni h(A) & \chi_{[-n,n]}(A)g_{n + 1}  \\
 & = h(A)\chi_{[-n,n]}(A)\chi_{[-n-1,n+1]}(A)g = h(A)g_n\,.
\end{align*}
We also have that $h(A)g_n = h(A)\chi_{[-n,n]}(A)g$ implies that $h(\lambda)\chi_{[-n,n]}(\lambda) \in \\ L^2\big(\R,\mu_g^{(A)}\big)$ and thus $h(A)g_n \in \mathfrak{L}(A,g)$ for all $n \in \N$.

We now prove part (ii). By Lemma~\ref{lem:nestedincreasingsets_convergence} we know that there exists some $M \subset B_\cH$ such that $\mathcal{K}_n \xrightarrow{\hat{d}_w} M$, where we use the shorthand $\mathcal{K}_n := \Kc{A}{g_n} \cap B_\cH$ and $\mathcal{L} := \mathfrak{L}(A,g) \cap B_\cH$.

We claim that $M = \mathcal{L}$, and is therefore the unit ball of a closed linear subspace. Indeed, from Theorem~\ref{th:fundamentalproperties_dwhat}(iii), we have
\[M = \{u \in B_\cH \,|\, u_n \rightharpoonup u \text{ for a sequence } (u_n)_{n \in \N} \text{ with } u_n \in \mathcal{K}_n\}\,.\]
Let $u \in \mathcal{L}$ be given, so there exists some $f \in L^2\big(\R,\mu_g^{(A)}\big)$ such that $f(A)g = u$. Let $f_n(\lambda) := f(\lambda)\chi_{[-n,n]}(\lambda)$, so that $f_n(A)g = f_n(A)g_n \in \mathfrak{L}(A,g_n)$ which implies $f_n(A)g \in \mathcal{K}_n$ as $\Vert f_n(A)g\Vert_\cH \leqslant \Vert f(A)g\Vert_\cH \leqslant 1$, and $\mathfrak{L}(A,g_n) = \Kc{A}{g_n}$ for all $n \in \N$ owing to the boundedness of the $g_n$'s (Proposition~\ref{prop:quasianalytic_determinate}). Letting $u_n := f_n(A)g$,
\[\Vert u_n - u\Vert_\cH^2 = \int_{\R} \vert f_n(\lambda) - f(\lambda) \vert^2 \, \rd \mu_{g}^{(A)} \xrightarrow{n \to \infty} 0\,,\]
from the Lebesgue Dominated Convergence Theorem. Therefore, there exists a sequence $(u_n)_{n \in \N}$ such that $u_n \in \mathcal{K}_n$ for all $n \in \N$ and $u_n \to u$ strongly (thus weakly), from which $u \in M$. This gives us the inclusion $\mathcal{L} \subset M$.

It is clear that $M \subset \mathcal{L}$ given that $\mathcal{K}_n \subset \mathcal{L}$ as proven above. Thus $M = \mathcal{L}$, and this completes the proof.
\end{proof}

\begin{remark}\label{rem:KrylovInnerApprox_L2approx}
This proposition shows us that when $\mathfrak{L}(A,g) = \Kc{A}{g}$ the Krylov subspace can be ``inner approximated'' by a sequence of nested Krylov subspaces. Indeed, it was shown in \cite{CM-krylov-perturbation-2021} that there always exists the property of inner approximability of Krylov subspaces when $A$ is a \emph{bounded} linear operator. 

More interestingly, it also shows us that by using a nested sequence of Krylov subspaces we can actually approximate the full space $\mathfrak{L}(A,g)$ which could strictly contain the closed Krylov subspace, precisely the case when the Hamburger moment problem \eqref{eq:Lmu} and \eqref{eq:Lmu_s} is not determinate. This suggests that such an approximation scheme of Krylov subspaces could be advantageous in constructing or approximating vectors that may not be present in $\Kc{A}{g}$ when the Hamburger moment problem is indeterminate.
\end{remark}

\begin{corollary}\label{cor:Korthog_limits}
Consider $A$, $g$, and $\cH$ as in the statement of Proposition~\ref{prop:KntoK} with the corresponding sequence $(g_n)_{n \in \N}$ from the same proposition. Then $\K{A}{g_n}^\perp \xrightarrow{\hat{d}_w} \mathfrak{L}(A,g)^\perp$.
\end{corollary}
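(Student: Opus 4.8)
The plan is to recognise this as an immediate application of Proposition~\ref{prop:weakconvergence_orthog} to the nested family produced by Proposition~\ref{prop:KntoK}. First I would set $M_n := \Kc{A}{g_n}$ and $M := \mathfrak{L}(A,g)$ and check the three hypotheses of Proposition~\ref{prop:weakconvergence_orthog}: the space $\cH$ is separable by assumption; each $M_n$ is a closed linear subspace and, by Proposition~\ref{prop:KntoK}(i), the family is nested increasing, $M_n \subset M_{n+1}$; and the weak-gap limit $M$ is a closed linear subspace of $\cH$ (established just before Proposition~\ref{prop:L2isomorphism_frakL}) to which the $M_n$ converge in $\hat{d}_w$ by Proposition~\ref{prop:KntoK}(ii).

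Next I would record the elementary but essential identifications $\K{A}{g_n}^\perp = \Kc{A}{g_n}^\perp = M_n^\perp$, valid because the orthogonal complement of any subset of $\cH$ coincides with the orthogonal complement of the closure of its linear span, and likewise $\mathfrak{L}(A,g)^\perp = M^\perp$. This reduces the assertion to proving $M_n^\perp \xrightarrow{\hat{d}_w} M^\perp$, phrased entirely in terms of the nested subspaces and their complements.

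Finally, I would invoke Proposition~\ref{prop:weakconvergence_orthog} directly: since $(M_n)_{n \in \N}$ is a nested increasing sequence of closed subspaces with $M_n \xrightarrow{\hat{d}_w} M$, that proposition yields both the nested decreasing structure $M^\perp \subset M_{n+1}^\perp \subset M_n^\perp$ and the convergence $M_n^\perp \xrightarrow{\hat{d}_w} M^\perp$. Translating back through the identification of the previous step gives $\K{A}{g_n}^\perp \xrightarrow{\hat{d}_w} \mathfrak{L}(A,g)^\perp$, as claimed. There is no substantial obstacle here, since the heavy lifting has already been done in Propositions~\ref{prop:weakconvergence_orthog} and \ref{prop:KntoK}; the only point requiring care is confirming that the hypotheses of Proposition~\ref{prop:weakconvergence_orthog} hold verbatim—in particular that the $\hat{d}_w$-limit is a genuine closed \emph{linear} subspace rather than merely a weakly closed set, which is precisely what Proposition~\ref{prop:KntoK} secures by identifying that limit with $\mathfrak{L}(A,g)$.
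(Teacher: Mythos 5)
Your proposal is correct and follows exactly the paper's own route: the paper proves this corollary in one line by combining Propositions~\ref{prop:KntoK} and \ref{prop:weakconvergence_orthog}, which is precisely your argument. Your additional care in recording the identification $\K{A}{g_n}^\perp = \Kc{A}{g_n}^\perp$ and in verifying that the $\hat{d}_w$-limit is a genuine closed linear subspace simply makes explicit what the paper leaves implicit.
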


\begin{proof}
This is a result of combining Propositions~\ref{prop:KntoK} and \ref{prop:weakconvergence_orthog}.
\end{proof}

\begin{remark}\label{rem:PKnprojector}
At this point we comment on the action of the orthogonal projection operator $P_{\mathcal{K}_n}:\cH \to \cH$ onto $\Kc{A}{g_n}$ from Proposition~\ref{prop:KntoK} and the proof therein for the choice of vectors $g_n := \chi_{[-n,n]}(A)g$ for all $n \in \N$.

For any vector $v \in \mathfrak{L}(A,g)$ it turns out that we have an explicit formula for $P_{\mathcal{K}_n}v$ that can be written in terms of the functional calculus. As there is a corresponding $h \in L^2\big(\R,\mu_g^{(A)}\big)$ such that $h(A)g = v$ we have
\begin{equation}\label{eq:rem_Pknprojector}
P_{\mathcal{K}_n}v = \int_\R h(\lambda) \chi_{[-n,n]}(\lambda) \, \rd E^{(A)}g = h(A)g_n\,.
\end{equation}
It is clear that $P_{\mathcal{K}_n}v \xrightarrow{n \to \infty} v \in \mathfrak{L}(A,g)$ by a Lebesgue Dominated Convergence argument, in agreement with Proposition~\ref{prop:convergenceofprojections}. The projection $P_{\mathcal{K}_n}v$ vector is the unique argument that minimises the square distance $\Vert u - v\Vert_\cH^2$ over all $u \in \Kc{A}{g_n}$. Indeed,
\begin{align*}
\Vert \tilde{h}(A)g_n - h(A)g\Vert_\cH^2 & = \Vert \tilde{h}(A)\chi_{[-n,n]}(A) g - h(A)g\Vert_\cH^2 \\
 & = \int_\R \vert \tilde{h}(\lambda)\chi_{[-n,n]}(\lambda) - h(\lambda)\vert^2 \, \rd \mu_g^{(A)}\\
 & = \int_{[-n,n]} \vert \tilde{h}(\lambda) - h(\lambda) \vert^2 \, \rd \mu_g^{(A)} + \int_{\R \setminus [-n,n]} \vert h(\lambda) \vert^2 \, \rd \mu_g^{(A)}\,,
\end{align*}
where $\tilde{h} \in L^2\big(\R,\mu_{g_n}^{(A)}\big)$ is the representation of any vector $u \in \Kc{A}{g_n} \cong L^2\big(\R,\mu_{g_n}^{(A)}\big)$. The minimiser for the above integral is $\tilde{h}(\lambda) = \chi_{[-n,n]}(\lambda)h(\lambda)$ $\mu_{g}^{(A)}$-a.e., i.e., $\tilde{h}(\lambda) = h(\lambda)$ for $\mu_{g}^{(A)}$-a.e. $\lambda \in [-n,n]$. This implies $P_{\mathcal{K}_n} v = h(A)g_n = \chi_{[-n,n]}(A)v$.
\end{remark}

As $V$ is a Hilbert space and separable when $\cH$ is separable, we also have the notion of the weak-gap based on the weak topology of $B_V$ in $V$, namely $\hat{d}^V_w$. We shall exploit this construction for our next theorem.

\begin{theorem}\label{th:Kreduced}
Let $A$ be a self-adjoint operator on a separable Hilbert space $\cH$, and let $g \in C^\infty(A)$ be such that the Hamburger moment problem \eqref{eq:Lmu} and \eqref{eq:Lmu_s} is determinate (for example, $g \in \mathcal{D}^{qa}(A)$).
	\begin{itemize}
		\item[(i)] $\Kint{A}{g} = \{0\}$. Moreover, if $g \in \ran A$ then there exists a solution $f$ to $Af =g$ in $\Kc{A}{g}^V$.
		\item[(ii)] For the sequence of vectors $(g_n)_{n \in \N}$ as in the statement of Proposition~\ref{prop:KntoK}, we have that $\Kc{A}{g_n}^V \xrightarrow{\hat{d}^V_w} \Kc{A}{g}^V$ and $\K{A}{g_n}^{\perpV} \xrightarrow{\hat{d}^V_w} \K{A}{g}^{\perpV}$, where $\hat{d}^V_w$ is the weak-gap metric based on the unit ball $B_V = \{v \in V \,|\, \Vert v\Vert_{A} \leqslant 1\}$ in the Hilbert space $V$.
	\end{itemize} 
\end{theorem}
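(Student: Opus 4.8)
The plan is to reduce the determinate case entirely to the bounded-vector result of Proposition~\ref{prop:bddvec_invariances}, using the truncations $g_n := \chi_{[-n,n]}(A)g \in \mathcal{D}^b(A)$ supplied by Proposition~\ref{prop:KntoK}, and to transport the weak-gap convergence machinery of Section~\ref{sec:review_struct_perturb} into the (separable) Hilbert space $V$. For part (i), I would first record that determinacy gives, via Proposition~\ref{prop:Kcore_Hamburgerderteminate} and Remark~\ref{rem:HMP_iffKcoreKapprox}, both $\mathfrak{L}(A,g) = \Kc{A}{g}$ and the Krylov core condition $\Kc{A}{g}^V = \Kc{A}{g} \cap \mathcal{D}(A)$. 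Since each $A^k g_n$ lies in $\Kc{A}{g_n} \subset \mathfrak{L}(A,g) = \Kc{A}{g}$ (Proposition~\ref{prop:KntoK}(i)) and in $\mathcal{D}(A)$ (as $g_n$ is bounded), we get $\K{A}{g_n} \subset \Kc{A}{g}^V$; passing to orthogonal complements in $V$ reverses this to $\K{A}{g}^{\perpV} = (\Kc{A}{g}^V)^{\perpV} \subset \K{A}{g_n}^{\perpV}$ for every $n$. Now I would fix $v \in \K{A}{g}^{\perpV}$; then $v \in \K{A}{g_n}^{\perpV}$, so Proposition~\ref{prop:bddvec_invariances} (applicable because $g_n$ is bounded) gives $Av \in \K{A}{g_n}^\perp$, i.e. $\langle Av, A^k g_n\rangle = 0$ for all $k$. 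Because $A^k g_n = \chi_{[-n,n]}(A)A^k g \to A^k g$ in $\cH$ by dominated convergence, letting $n \to \infty$ yields $\langle Av, A^k g\rangle = 0$, so $Av \in \K{A}{g}^\perp$. Hence $A\big(\K{A}{g}^{\perpV}\big) \subset \K{A}{g}^\perp$ and therefore $\Kint{A}{g} = \Kc{A}{g} \cap A\big(\K{A}{g}^{\perpV}\big) \subset \Kc{A}{g} \cap \K{A}{g}^\perp = \{0\}$; the solution $f \in \Kc{A}{g}^V$ for $g \in \ran A$ is then immediate from Theorem~\ref{th:KrylovInt}(i).

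For part (ii) I would mirror the proof of Proposition~\ref{prop:KntoK}(ii), but carried out inside $V$ (which is separable, embedding isometrically into $\cH \times \cH$). The subspaces $\Kc{A}{g_n}^V$ form a nested increasing sequence of closed subspaces of $V$ — the same nesting check as above shows $\K{A}{g_n} \subset \Kc{A}{g_{n+1}}^V \subset \Kc{A}{g}^V$ — so Lemma~\ref{lem:nestedincreasingsets_convergence} applied in $V$ produces a $\hat{d}^V_w$-limit $M$ of the unit balls $B_{\Kc{A}{g_n}^V}$. To identify $M = B_{\Kc{A}{g}^V}$, I would use the isometric isomorphism $\Kc{A}{g}^V = \mathfrak{L}(A,g)\cap\mathcal{D}(A) \cong L^2\big(\R,(1+\lambda^2)\mu_g^{(A)}\big)$ of Proposition~\ref{prop:L2isomorphism_frakL}(ii): for $u = f(A)g \in B_{\Kc{A}{g}^V}$ the truncations $u_n := (f\chi_{[-n,n]})(A)g = f(A)g_n$ lie in $B_{\Kc{A}{g_n}^V}$ and converge to $u$ in $\Vert \cdot \Vert_A$ by dominated convergence, hence $V$-weakly, giving $B_{\Kc{A}{g}^V} \subset M$; the reverse inclusion follows from Theorem~\ref{th:fundamentalproperties_dwhat}(iii) together with the $V$-weak closedness of $\Kc{A}{g}^V$. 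The convergence of the orthogonal complements is then exactly Proposition~\ref{prop:weakconvergence_orthog} applied in $V$, using $(\Kc{A}{g_n}^V)^{\perpV} = \K{A}{g_n}^{\perpV}$ and $(\Kc{A}{g}^V)^{\perpV} = \K{A}{g}^{\perpV}$.

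I expect the main obstacle to be bookkeeping rather than any single hard estimate: one must verify that all of the weak-gap and orthogonal-complement apparatus (Lemma~\ref{lem:nestedincreasingsets_convergence}, Theorem~\ref{th:fundamentalproperties_dwhat}(iii), Proposition~\ref{prop:weakconvergence_orthog}) transfers cleanly to $V$ with its weak topology, and that the isomorphism of Proposition~\ref{prop:L2isomorphism_frakL}(ii) correctly matches the truncation $f\chi_{[-n,n]}$ with the projection onto $\Kc{A}{g_n}^V$. The one genuinely delicate structural point, in part (i), is the inclusion $\K{A}{g}^{\perpV} \subset \K{A}{g_n}^{\perpV}$: this is precisely where the Krylov core condition — and hence determinacy of the Hamburger moment problem — is indispensable, since it is what guarantees $\K{A}{g_n} \subset \Kc{A}{g}^V$ and thereby lets the bounded-vector invariance of Proposition~\ref{prop:bddvec_invariances} be inherited in the limit.
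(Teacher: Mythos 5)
Your proof is correct, and for part (ii) it is essentially the paper's own argument: nested closed subspaces $\Kc{A}{g_n}^V \subset \Kc{A}{g_{n+1}}^V \subset \Kc{A}{g}^V$ obtained from determinacy (Krylov core condition, Proposition~\ref{prop:Kcore_Hamburgerderteminate}) plus $\Kc{A}{g_n}\subset\mathfrak{L}(A,g)=\Kc{A}{g}$, then Lemma~\ref{lem:nestedincreasingsets_convergence} in the separable Hilbert space $V$, identification of the limit ball, and Proposition~\ref{prop:weakconvergence_orthog} in $V$ for the complements. The only cosmetic difference there is the identification step: you approximate an arbitrary $u=f(A)g\in B_{\Kc{A}{g}^V}$ by the truncations $f(A)g_n$ via Proposition~\ref{prop:L2isomorphism_frakL}(ii) (mirroring the proof of Proposition~\ref{prop:KntoK}), while the paper tests only with polynomials, using Lemma~\ref{lem:p(A)gn_to_p(A)g} to get $p(A)g_n\to p(A)g$ in $\Vert\cdot\Vert_A$ together with the norm bound $\Vert p(A)g_n\Vert_A\leqslant\Vert p(A)g\Vert_A$, and then upgrades $\K{A}{g}\cap B_V\subset C$ to $\Kc{A}{g}^V\cap B_V\subset C$ by weak closedness of $C$; both are complete. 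The genuine divergence is in part (i) and in the logical ordering. The paper proves (ii) first and then derives (i) by sandwiching $A\big(\K{A}{g}^{\perpV}\big)\subset A\big(\K{A}{g_n}^{\perpV}\big)\subset\K{A}{g_n}^\perp$ (Proposition~\ref{prop:bddvec_invariances}) and passing to the limit with Corollary~\ref{cor:Korthog_limits}, i.e.\ the weak-gap convergence $\K{A}{g_n}^\perp\xrightarrow{\hat{d}_w}\mathfrak{L}(A,g)^\perp=\K{A}{g}^\perp$ in $\cH$, so its part (i) sits downstream of the entire perturbative apparatus of Section~\ref{sec:selfadjoint_perturbations}. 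You make part (i) self-contained: from $v\in\K{A}{g}^{\perpV}\subset\K{A}{g_n}^{\perpV}$ you get $\langle Av, A^kg_n\rangle=0$ by Proposition~\ref{prop:bddvec_invariances}, and you pass to the limit using only the strong convergence $A^kg_n=\chi_{[-n,n]}(A)A^kg\to A^kg$ and continuity of the scalar product, then conclude $\Kint{A}{g}\subset\Kc{A}{g}\cap\K{A}{g}^\perp=\{0\}$ and invoke Theorem~\ref{th:KrylovInt}(i). This is more elementary, removes the dependence of (i) on (ii) and on Corollary~\ref{cor:Korthog_limits} entirely, and yields a cleaner dependency structure; what the paper's route buys is merely the re-use of results it has already established for other purposes. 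You also correctly isolate the one point where determinacy is indispensable in (i), namely the inclusion $\K{A}{g}^{\perpV}\subset\K{A}{g_n}^{\perpV}$, which in your version follows from $\K{A}{g_n}\subset\Kc{A}{g}\cap\mathcal{D}(A)=\Kc{A}{g}^V$ and duality.
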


\begin{proof}
We begin by proving statement (ii). Consider the sequence of bounded vectors $(g_n)_{n \in \N}$ corresponding to the same sequence of vectors in the statement of Proposition~\ref{prop:KntoK}. It is known that $\Kc{A}{g} \cong L^2\big(\R,\mu_g^{(A)}\big)$, that is, $\mathfrak{L}(A,g) = \Kc{A}{g}$, as the Hamburger moment problem is determinate (Theorem~\ref{th:Hamburger_determinancy}).

We already know that $\Kc{A}{g_n}$ and $\Kc{A}{g}$ satisfy the Krylov core condition (Proposition~\ref{prop:Kcore_Hamburgerderteminate}) and $\Kc{A}{g_n} \subset \Kc{A}{g}$ (Proposition~\ref{prop:KntoK}), so that we end up with the inclusions
\[\Kc{A}{g_n}^V \subset \Kc{A}{g_{n+1}}^V \subset \Kc{A}{g}^V\,, \quad \forall \, n \in \N\,,\]
from which
\[\K{A}{g}^{\perpV} \subset \K{A}{g_{n+1}}^{\perpV} \subset \K{A}{g_{n}}^{\perpV}\,, \quad \forall \, n \in \N\,.\]
The nested structure of the Krylov subspaces means that $\Kc{A}{g_n}^V \cap B_V \xrightarrow{\hat{d}^V_w} C$ where $C$ is a weakly closed subset of $B_V$ (Lemma~\ref{lem:nestedincreasingsets_convergence}). Owing to the set inclusions above, we see that $C \subset \Kc{A}{g}^V \cap B_V$. Let $p$ be any polynomial, and by Lemma~\ref{lem:p(A)gn_to_p(A)g} we have
\[\frac{\alpha p(A)g_n}{\Vert p(A)g\Vert_{A}} \xrightarrow[n \to \infty]{\Vert \cdot\Vert_{A}} \frac{\alpha p(A)g}{\Vert p(A)g\Vert_{A}}\,,\]
where $\frac{\alpha p(A)g_n}{\Vert p(A)g\Vert_{A}} \in B_V$ for all $n \in \N$, $\vert \alpha \vert \leqslant 1$. This shows us that $\K{A}{g} \cap B_V \subset C$, and thus $\Kc{A}{g}^V \cap B_V \subset C$, which implies that $C = \Kc{A}{g}^V \cap B_V$ and therefore $\Kc{A}{g_n}^V \xrightarrow{\hat{d}^V_w} \Kc{A}{g}^V$. Moreover combining this with Proposition~\ref{prop:weakconvergence_orthog} gives us the convergence $\K{A}{g_n}^{\perpV} \xrightarrow{\hat{d}^V_w} \K{A}{g}^{\perpV}$.

We now prove statement (i). Due to the inclusions, we see that 
\[A \big(\K{A}{g}^{\perpV}\big) \subset A \big(\K{A}{g_n}^{\perpV}\big) \subset \K{A}{g_n}^\perp\,,\]
where the last inclusion is from Proposition~\ref{prop:bddvec_invariances}. Owing to the convergence of the orthogonal complements (in $\cH$) established in Corollary~\ref{cor:Korthog_limits} and $\mathfrak{L}(A,g) = \Kc{A}{g}$, taking the limit as $n \to \infty$ we see that
\[A\big(\K{A}{g}^{\perpV}\big) \subset \K{A}{g}^\perp\,,\]
and therefore we have that $\Kint{A}{g} = \{0\}$.
\end{proof}

\begin{remark}\label{rem:KnVtoKV}
We note that should we release the assumption of Theorem~\ref{th:Kreduced} that the Hamburger moment problem be determinate, we can still say something informative about the weak-gap convergence of the nested sequence of Krylov subspaces $(\Kc{A}{g_n}^V)_{n \in \N}$ in $V$.

In such a case we have that $\Kc{A}{g_n}^V \subset \Kc{A}{g_{n+1}}^V \subset \mathfrak{L}(A,g) \cap \mathcal{D}(A) \cong L^2\big(\R,(1+\lambda^2)\mu_g^{(A)}\big)$ for all $n \in \N$. By Lemma~\ref{lem:nestedincreasingsets_convergence} there exists $C \subset B_V$ weakly closed (in $V$) such that $\Kc{A}{g_n} \xrightarrow{\hat{d}^V_w} C$. It is also clear that $C \subset \mathfrak{L}(A,g) \cap B_V = \{v \in \mathfrak{L}(A,g)\cap\mathcal{D}(A) \, | \, \Vert v\Vert_{A} \leqslant 1\}$. Taking any $h \in L^2\big(\R,(1+\lambda^2)\mu_g^{(A)}\big)$ we let $h_n(\lambda) := h(\lambda)\chi_{[-n,n]}(\lambda)$ so that $\vert h_n \vert \leqslant \vert h \vert$ and $h_n(A)g = h(A)g_n$ for all $n \in \N$. Then $\alpha\frac{\Vert h(A)g_n\Vert_{A}}{\Vert h(A)g\Vert_{A}} \leqslant 1$ for all $\vert \alpha \vert \leqslant 1$ and due to Lebesgue Dominated Convergence we have
\[\Vert h(A)g_n - h(A)g\Vert_{A}^2 \xrightarrow{n\to\infty} 0\,,\]
so that clearly $\alpha \frac{h(A)g}{\Vert h(A)g\Vert_{A}} \in C$ for all $\vert \alpha \vert \leqslant 1$. This gives the inclusion $\mathfrak{L}(A,g) \cap B_V \subset C$. Therefore,
\begin{equation}\label{eq:remark_KnVtoKV}
\Kc{A}{g_n}^V \xrightarrow{\hat{d}^V_w} \mathfrak{L}(A,g) \cap \mathcal{D}(A)\,.
\end{equation}
\end{remark}



\bibliographystyle{siam}
\def\cprime{$'$}

\end{document}